\newtheorem{thm}{Theorem}[section]
\newtheorem{prop}{Proposition}[section]
\newtheorem{lemma}{Lemma}[section]
\theoremstyle{definition}
\newtheorem{defn}{Definition}[section]
\newtheorem{rmk}{Remark}[section]
\newtheorem{question}{Question}[section]
\newcommand\R{\mathbb{R}}
\newcommand{\area}{\operatorname{area}}
\title{Equidistribution of saddle connections on translation surfaces}
\author{Benjamin Dozier \thanks{Department of Mathematics, Stanford University, \href{mailto:benjamin.dozier@gmail.com}{\nolinkurl{benjamin.dozier@gmail.com}}.  Supported in part by  NSF grant DGE-114747.}}
\date{}
\begin{document}
\maketitle

\begin{abstract}
Fix a translation surface $X$, and consider the measures on $X$ coming from averaging the uniform measures on all the saddle connections of length at most $R$.  Then as $R\to\infty$, the weak limit of these measures exists and is equal to the area measure on $X$ coming from the flat metric.  This implies that, on a rational-angled billiard table, the billiard trajectories that start and end at a corner of the table are equidistributed on the table.  We also show that any weak limit of a subsequence of the counting measures on $S^1$ given by the angles of all saddle connections of length at most $R_n$, as $R_n\to\infty$, is in the Lebesgue measure class.  The proof of the equidistribution result uses the angle result, together with the theorem of Kerckhoff-Masur-Smillie that the directional flow on a surface is uniquely ergodic in almost every direction.  
\end{abstract}

\tableofcontents

\section{Introduction}

Consider a polygonal billiard table, with a frictionless billiard ball bouncing around inside according to the rule that, when it hits a side, angle of incidence equals angle of reflection.  This is a much studied dynamical system that exhibits rich and complicated behavior, and it can be seen as a model for basic physical systems, such as a gas molecule bouncing around in a box.  When the ball hits a corner of the table, its future trajectory is not well-defined, and this presence of singular trajectories is in some sense responsible for the complicated behavior of the system.   It is thus of interest to study these singular trajectories, and in particular, to study the finite length ``corner-corner'' trajectories that both start and end at a corner.  

In the case where all the angles of the billiard table are \emph{rational}, many special techniques are available to study the system.   There is an unfolding construction due to Fox-Kershner \cite{fk1936} and Katok-Zemljakov \cite{kz1975}, which involves taking many copies of the original rational billiard table and assembling them into a closed surface with a flat structure, called a \emph{translation surface}.  The billiard trajectories on the original table, each of which consists of many straight segments meeting at angles, correspond to completely straight trajectories on the translation surface.  The flat structure has finitely many singular points, and the straight lines that start and end at singular points, called \emph{saddle connections}, correspond to corner-corner trajectories on the billiard table.

The main result of this paper (Theorem \ref{thm:equidist}) is that the set of saddle connections becomes \emph{equidistributed} on any translation surface.  This implies that, on a rational billiard table, the set of corner-corner trajectories is equidistributed on the table.  A similar result also holds for periodic billiard trajectories (that don't hit corners) on rational billiard tables (see Remark \ref{rmk:cyl}).  

The proof involves translating the question about long saddle connections on an individual translation surface into a question about bounded length saddle connections on varying surfaces; this is an instance of the philosophy of \emph{renormalization}.  The process of varying the surface corresponds to an action of $SL_2(\mathbb{R})$ on the space of translation surfaces.

\subsection{Basic Definitions.} A \emph{translation surface} is a pair $X=(M,\omega)$, where $M$ is a Riemann surface, and $\omega$ is a holomorphic $1$-form.  At a finite set $\Sigma$, the form $\omega$ has zeroes. Away from its zeroes, $\omega$ defines a flat (Euclidean) metric.   The metric has a conical singularity of cone angle $2(n+1)\pi$ at each zero of order $n$.  We will assume throughout that the genus of $M$ is at least $2$, which implies that the set of singular points is non-empty.  When we write $X$, we will often mean the surface endowed with this metric.  The area with respect to the flat metric defines a measure $\lambda$ on $X$, which we will refer to as \emph{area measure}, or simply \emph{area}.   An important class of translation surfaces are those arising from polygonal billiard tables with rational angles via unfolding.  

Our primary objects of study are \emph{saddle connections}, which are geodesic segments that start and end at zeroes (we allow the endpoints to coincide), with no zeroes on the interior of the segment.  For each saddle connection $s$, the integral of the 1-form $\omega$ along $s$ gives an element of $\mathbb{C}$, the \emph{holonomy} vector, which carries the information of the length $|s|$ of $s$ as well as its direction $\operatorname{angle}(s)$ (relative to the horizontal direction).  

We can also consider closed loops not hitting zeroes that are geodesic with respect to the flat metric.  Whenever there is one of these, there will always be a continuous family of parallel closed geodesic loops with the same length.  We refer to a maximal such family as a \emph{cylinder}.  Every cylinder is bounded by a union of saddle connections parallel to the cylinder.  

The bundle $\Omega \mathcal{M}_g$ of holomorphic 1-forms over $\mathcal{M}_g$ (the moduli space of genus $g$ Riemann surfaces), with zero section removed, can be thought of as the moduli space of translation surfaces.  This bundle breaks up into strata of translation surfaces that have the same multiplicities of the zeroes of $\omega$.  We denote by $\mathcal{H}(m_1,\dots,m_k)$ the stratum of surfaces of area $1$ with $k$ zeroes of order $m_1,\ldots,m_k$. 

For more information about the theory of translation surfaces, including the connection to rational billiards, many survey articles are available, such as \cite{zorich2006} and \cite{wright2015}.  
\subsection{Main results.}

The first result concerns equidistribution of saddle connections on the surface.  

\begin{thm}
Given a saddle connection $s$, let $\mu_s$ be the probability measure on $X$ that is uniform on $s$ (i.e. the measure of any subset is proportional to the linear measure of the intersection with $s$).  Let $\mu_R$ be the average of the $\mu_s$ over all $s$ of length at most $R$.  Then as $R\to\infty$, $\mu_R$ converges weakly to area measure on $X$. 
\label{thm:equidist}
\end{thm}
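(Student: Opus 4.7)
Fix $f \in C(X)$; after replacing $f$ by $f - \int f\,d\lambda$ we may assume $\int f\,d\lambda = 0$ and aim to show $\int f\,d\mu_R \to 0$. Unpacking the definition,
\[
\int f\,d\mu_R \;=\; \frac{1}{N(R)} \sum_{|s| \le R} \frac{1}{|s|} \int_s f,
\]
where $N(R) = \#\{s : |s| \le R\}$. The key observation is that each summand is a time-$|s|$ Birkhoff average of $f$ along the directional straight-line flow $\phi^{\theta_s}_t$ on $X$ in the direction $\theta_s$ of $s$. The strategy is to show that, asymptotically, most of these summands are small, then control the remaining ``bad'' directions using the angle equidistribution statement from the abstract.

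The dynamical input is Kerckhoff--Masur--Smillie: for Lebesgue-a.e.\ $\theta \in S^1$ the flow $\phi_t^\theta$ is uniquely ergodic with respect to $\lambda$, and hence (since $X$ is compact) its Birkhoff averages of $f$ converge to zero uniformly in the base point. Given $\epsilon > 0$, I would apply Egoroff's theorem to produce a threshold $T_0 = T_0(\epsilon)$ and a set of ``good directions'' $G_{\epsilon,T_0} \subset S^1$ of Lebesgue measure at least $1 - \epsilon$ such that for every $\theta \in G_{\epsilon,T_0}$, every $T \ge T_0$, and every base point, the length-$T$ Birkhoff average of $f$ has absolute value at most $\epsilon$. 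With mild care, $G_{\epsilon,T_0}$ can be arranged to be closed, so that $B_{\epsilon,T_0} := S^1 \setminus G_{\epsilon,T_0}$ is open.

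Next, split the sum into three groups and bound each: (i) the finitely many short saddle connections with $|s| \le T_0$ contribute $O(1/N(R)) \to 0$; (ii) long saddle connections with $\theta_s \in G_{\epsilon,T_0}$ each contribute at most $\epsilon$, so at most $\epsilon$ in total; (iii) the long ``bad'' saddle connections with $\theta_s \in B_{\epsilon,T_0}$ contribute at most $\|f\|_\infty \cdot \nu_R(B_{\epsilon,T_0})$, where $\nu_R$ is the probability counting measure on $S^1$ of directions of saddle connections of length at most $R$. The angle equidistribution statement from the abstract says that any weak subsequential limit $\nu$ of $\nu_R$ is absolutely continuous with respect to Lebesgue. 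Since $B_{\epsilon,T_0}$ is open in $S^1$, its closure differs from it by a countable set, so $|\overline{B_{\epsilon,T_0}}| = |B_{\epsilon,T_0}| < \epsilon$, and absolute continuity of $\nu$ makes $\nu(\overline{B_{\epsilon,T_0}})$ small; the portmanteau theorem then gives $\limsup_R \nu_R(B_{\epsilon,T_0}) \le \nu(\overline{B_{\epsilon,T_0}})$. Combining the three bounds and sending $\epsilon \to 0$ (extracting subsequences to realize a single $\nu$) forces $\int f\,d\mu_R \to 0$. Weak compactness of probability measures on the compact surface $X$ upgrades ``every subsequential limit equals $\lambda$'' to full convergence.

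The main obstacle I expect is the dynamical step: extracting from unique ergodicity a uniform-in-base-point statement that is well-behaved as a function of $\theta$, including handling base points whose forward trajectory strikes a singularity (where the flow is undefined). Relatedly, showing that the good set $G_{\epsilon,T_0}$ can be chosen closed amounts to verifying a lower semicontinuity of the sup-over-base-point Birkhoff average in $\theta$, which I expect to follow from continuity of $\theta \mapsto \phi^\theta_t(x)$ away from a countable bad set (for each fixed $x,T$). The rest of the argument is careful bookkeeping; the elegance is that Kerckhoff--Masur--Smillie and the angle equidistribution theorem slot in cleanly through the good/bad-direction split above.
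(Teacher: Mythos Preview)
Your approach is correct and takes a genuinely different route from the paper's. Both proofs use the same two inputs---Kerckhoff--Masur--Smillie and Theorem~\ref{thm:angle-lebesgue}---but assemble them differently. The paper argues by contradiction: it lifts the $\mu_R$ to measures $\eta_R$ on the unit tangent bundle $T_1X'$ (with $X'=X\setminus\Sigma$), extracts a subsequential limit $\eta$, and \emph{disintegrates} $\eta$ along the angle map $A:T_1X'\to S^1$; the fiber measures are locally flow-invariant, KMS forces them to equal area for Lebesgue-a.e.\ fiber, and absolute continuity of $A_*\eta$ (which is exactly where Theorem~\ref{thm:angle-lebesgue} enters) lets one integrate this to conclude $p_*\eta=\lambda$. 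Because $T_1X'$ is non-compact, that route requires a separate no-escape-of-mass argument (Lemmas~\ref{lemma:seg_inter}--\ref{lemma:no-escape}) showing that saddle connections do not concentrate near $\Sigma$. Your Egoroff/good--bad split, carried out directly on the compact surface $X$, sidesteps both the lift and those lemmas, which is a genuine simplification; the portmanteau step is also a clean replacement for the disintegration machinery. Your flagged obstacle---extracting from unique ergodicity a uniform-in-base-point Birkhoff statement despite the flow being undefined at $\Sigma$---is essentially the same issue the paper confronts when it argues that a locally invariant probability measure in a non-saddle-connection direction must be fully invariant (hence area); the cleanest resolution is to pass to the first-return interval exchange on a transversal, where unique ergodicity yields uniform convergence classically. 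One minor remark: you do not need any semicontinuity to make $G_{\epsilon,T_0}$ closed---inner regularity of Lebesgue measure already lets you replace the Egoroff set by a closed subset of nearly full measure, and the Birkhoff inequality is inherited by subsets.
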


In particular, the result applies to any translation surface arising from unfolding of a polygonal billiard table with all angles rational.  The result implies that, on such a table, if a player takes the shortest shot that starts and ends at a corner, then the next shortest, and so on, the table will get worn evenly (asymptotically).

\begin{figure}[h]
\begin{center}
  \includegraphics[scale=0.7]{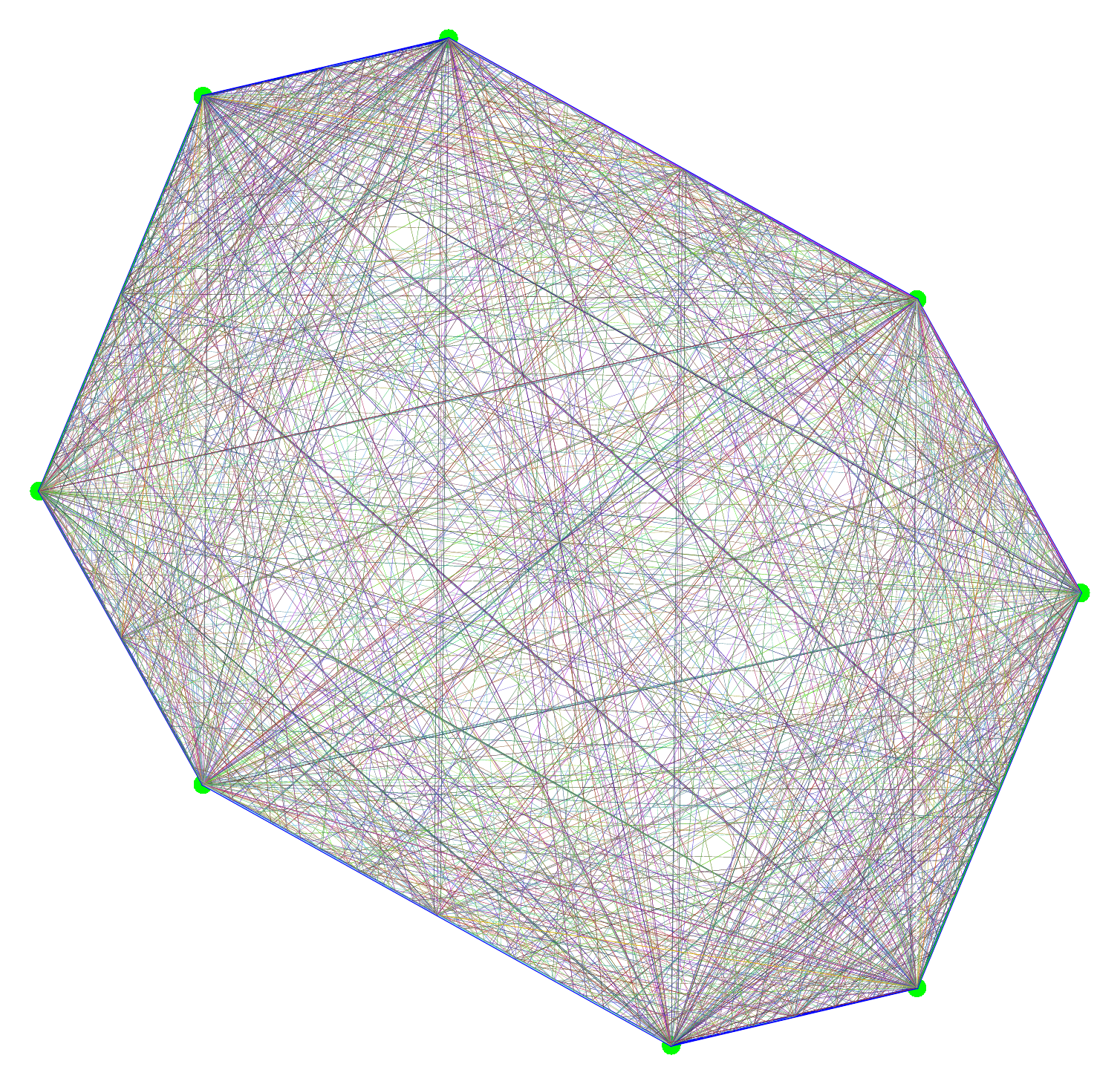}
  \caption{Saddle connections of length at most $R=7$ on a genus two translation surface (opposite sides are identified), in units where the height of the figure is approximately 2.  The thickness of each saddle connection is drawn inversely proportional to its length (so the total amount of ``paint'' used to draw a saddle connection is independent of its length).  This choice of thickness is meant to represent the measures $\mu_s$, which are all probability measures, in Theorem \ref{thm:equidist}.  That theorem says that, as the length bound $R$ goes to infinity, the picture will be uniformly colored. This picture was generated with the help of Ronen Mukamel's $\texttt{triangulated\_surfaces}$ SAGE package.}
  \label{fig:sc_plot}
\end{center}
\end{figure}

\begin{rmk}
  Note that the measures $\mu_s$ corresponding to individual saddle connections do not necessarily become equidistributed as $|s|\to\infty$.  For instance, construct a surface that has a cylinder whose closure is a proper subset of the surface, take a saddle connection contained in the cylinder crossing from boundary to boundary, and then take Dehn twists of this saddle connection about a circumference curve of the cylinder.  See Figure \ref{fig:cyl}.  This gives a family of longer and longer saddle connections, but they all live in the cylinder, so the corresponding measures $\mu_s$ will give zero mass to the complement.
\end{rmk}

\begin{figure}[h]
\begin{center}
  \includegraphics[scale=0.5]{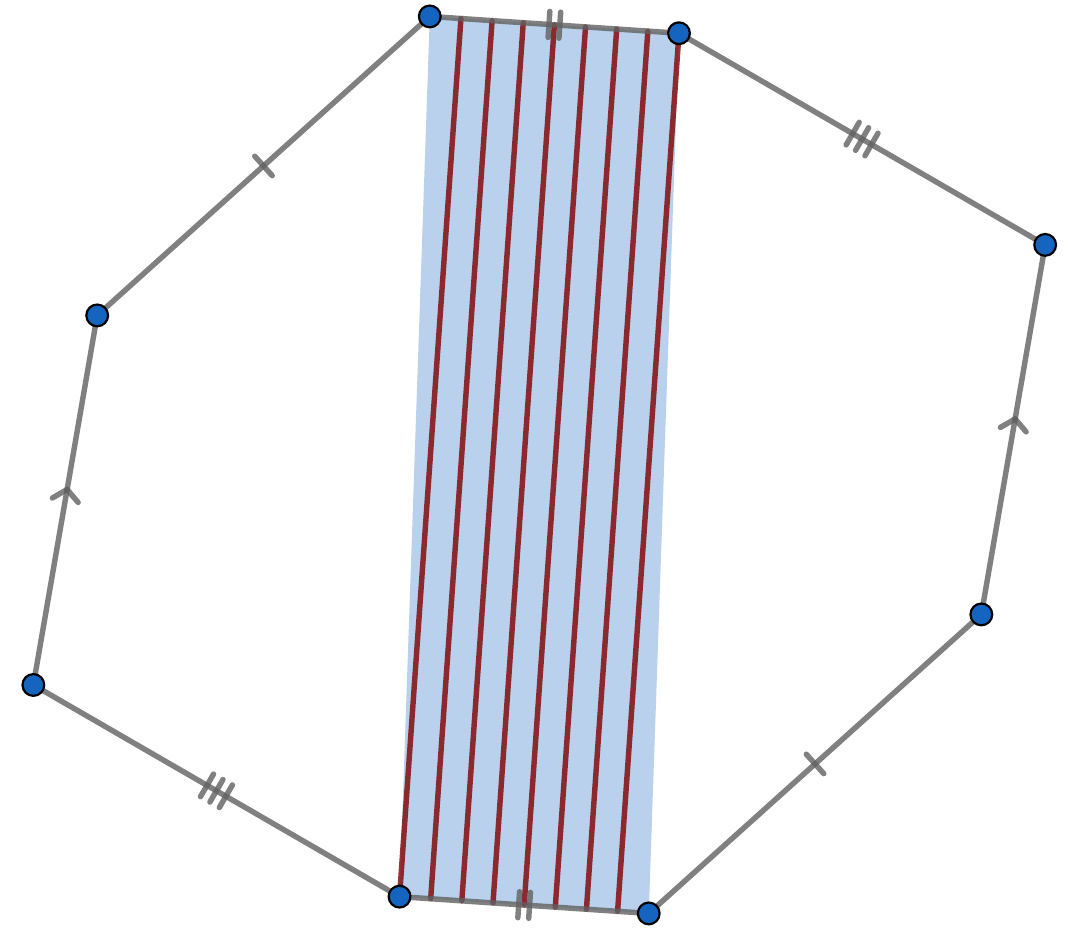}
  \caption{Opposite sides of the polygon are identified to give a genus two translation surface.  A cylinder is shown, together with a long saddle connection contained in that cylinder.}
  \label{fig:cyl}
\end{center}
\end{figure}

Theorem \ref{thm:equidist} can be generalized to any quadratically growing collection of saddle connections.  

\begin{thm}
  Let $S$ be a subset of the set of saddle connections on $X$, let $N_S(R)$ be the number of elements of $S$ of length at most $R$, and let $\mu_{R,S}$ be the average of the uniform measures $\mu_s$ over all $s\in S$ of length at most $R$.  Suppose there is some constant $c$ such that $N_S(R)\ge c R^2$ for all $R$.  Then, as $R\to\infty$, $\mu_{R,S}$ converges weakly to area measure on $X$. 
\label{thm:equidist-subset}
\end{thm}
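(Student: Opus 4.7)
The plan is to adapt the argument for Theorem \ref{thm:equidist} to the subset setting, using the hypothesis $N_S(R) \geq cR^2$ to transfer both the angle-Lebesgue property and the quadratic normalization from the full set of saddle connections to $S$.

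First I would show that every weak-$*$ limit of the angle counting measures $\nu_R^S := \frac{1}{N_S(R)} \sum_{s \in S,\, |s|\leq R} \delta_{\operatorname{angle}(s)}$ is absolutely continuous with respect to Lebesgue measure on $S^1$. Consider the auxiliary unnormalized versions $\tilde\nu_R^S := \frac{1}{N(R)} \sum_{s \in S,\, |s|\leq R} \delta_{\operatorname{angle}(s)}$ and $\tilde\nu_R := \frac{1}{N(R)} \sum_{|s|\leq R} \delta_{\operatorname{angle}(s)}$, which satisfy $\tilde\nu_R^S \leq \tilde\nu_R$ as measures; moreover $\tilde\nu_R^S(S^1) = N_S(R)/N(R) \in [c/C_X, 1]$ by the quadratic hypothesis together with Masur's upper bound $N(R) \leq C_X R^2$. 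Along a subsequence on which both $\tilde\nu_R$ and $\tilde\nu_R^S$ converge weakly, with limits $\mu$ and $\mu^S$, the angle result quoted in the abstract gives $\mu \ll \lambda_{S^1}$, and the inequality $\tilde\nu_R^S \leq \tilde\nu_R$ passes to the weak limit to give $\mu^S \leq \mu$; hence $\mu^S \ll \lambda_{S^1}$. Renormalizing by $\mu^S(S^1) \geq c/C_X > 0$, the corresponding weak limit $\nu_\infty^S$ of $\nu_R^S$ is absolutely continuous.

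Next I would package the angle and flow data together as a joint probability measure $\eta_R := \frac{1}{N_S(R)} \sum_{s \in S,\, |s|\leq R} \delta_{\operatorname{angle}(s)} \otimes \mu_s$ on $S^1 \times X$, whose marginals are $\nu_R^S$ and $\mu_{R,S}$. Passing to a weak subsequential limit $\eta_\infty$ and disintegrating over the first factor gives $\eta_\infty(d\theta,dx) = \tau_\theta(dx)\,d\nu_\infty^S(\theta)$ for some probability kernel $\tau_\theta$ on $X$. By Kerckhoff-Masur-Smillie the directional flow $\phi_\theta$ is uniquely ergodic for Lebesgue-a.e.\ $\theta$, hence by absolute continuity of $\nu_\infty^S$ for $\nu_\infty^S$-a.e.\ $\theta$. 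Following the blueprint of Theorem \ref{thm:equidist}, one identifies $\tau_\theta$, for such good $\theta$, with a limit of $\mu_s$ over saddle connections $s \in S$ with $\operatorname{angle}(s) \to \theta$ and $|s| \to \infty$; unique ergodicity of $\phi_\theta$ then forces each such limit to equal $\lambda$. Saddle connections in $S$ with $|s| \leq R_0$ contribute weight at most $C_X R_0^2/(cR^2) \to 0$ and so are asymptotically negligible. Therefore $\tau_\theta = \lambda$ for $\nu_\infty^S$-a.e.\ $\theta$, and $\mu_{R,S}$ subsequentially weakly converges to $\int_{S^1} \lambda\,d\nu_\infty^S = \lambda$. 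Since every subsequential limit equals $\lambda$, we conclude $\mu_{R,S} \to \lambda$.

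The hard part will be the identification step in the third paragraph: showing rigorously that for $\nu_\infty^S$-a.e.\ $\theta$ the disintegration $\tau_\theta$ really is a limit of uniform measures on long saddle connections in $S$ whose directions approach $\theta$, so that KMS can be applied. This is the technical heart of the proof of Theorem \ref{thm:equidist}, and the proposal amounts to verifying that that proof uses the full set of saddle connections only through (i) the absolute continuity of angle limits and (ii) a quadratic lower bound on the count, both of which the first step establishes for $S$.
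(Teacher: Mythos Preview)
Your overall architecture is right and matches the paper: pass to a subsequential limit, push forward to $S^1$, show the angle limit is absolutely continuous with respect to Lebesgue, disintegrate over angle, and use Kerckhoff--Masur--Smillie on almost every fiber.  Your first paragraph (angle absolute continuity via comparison with the full counting measure) is correct; the paper takes a slightly more direct route, using only the upper bound Theorem~\ref{thm:upper-bound} together with the hypothesis $N_S(R)\ge cR^2$ to bound $\nu^S_{R_n}(I)\le \frac{N(X,R_n,I)}{N_S(R_n)}\le \frac{c_2}{c}|I|$, but your argument works too.

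The real issue is your ``hard part.''  You propose to identify the disintegration $\tau_\theta$ with a weak limit of $\mu_s$ over $s\in S$ with $\operatorname{angle}(s)\to\theta$ and $|s|\to\infty$, and then invoke unique ergodicity.  This is \emph{not} how the proof of Theorem~\ref{thm:equidist} proceeds, and it is not clear this identification can be made: disintegration does not in general express fiber measures as limits of measures supported on nearby fibers, and even if it did, a limit of $\mu_s$ with $\operatorname{angle}(s)\to\theta$ is not obviously $\phi_\theta$-invariant, so unique ergodicity would not apply directly.

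The paper's mechanism is different and much simpler.  Each $\delta_{\operatorname{angle}(s)}\otimes\mu_s$ is \emph{locally invariant} under the fiberwise directional flow (flow by $\phi_{\operatorname{angle}(s)}$ in the fiber $\{\operatorname{angle}(s)\}\times X$), since $\mu_s$ is uniform on a geodesic segment.  Hence so is each $\eta_R$, and hence so is the weak limit $\eta_\infty$.  Disintegrating a flow-invariant measure over the invariant factor $S^1$ yields fiber measures $\tau_\theta$ that are, for $\nu^S_\infty$-a.e.\ $\theta$, locally invariant under $\phi_\theta$.  Now absolute continuity of $\nu^S_\infty$ lets you apply KMS to conclude $\tau_\theta=\lambda$ for $\nu^S_\infty$-a.e.\ $\theta$ (with the small extra argument, as in the paper, ruling out mass on rays hitting $\Sigma$).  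So replace your identification step by this flow-invariance argument; once you do, the proof goes through exactly as for Theorem~\ref{thm:equidist}, and the only place the subset hypothesis enters is the absolute continuity of the angle limit, exactly as the paper notes.
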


The proof is essentially the same as that of Theorem \ref{thm:equidist}; we give a few comments on the modifications needed after the proof of Theorem \ref{thm:equidist}.  

\begin{rmk} We also have a version of Theorem \ref{thm:equidist} for cylinders: given a cylinder $c$ on $X$, let $\sigma_c$ be the probability measure on $X$ that is proportional to the restriction of the area measure to $c$.   Let $\sigma_R$ be the average of the $\sigma_c$ over all $c$ of length at most $R$.  Then, as $R\to\infty$, $\sigma_R$ converges weakly to area measure on $X$. This immediately gives a new proof that the set of periodic geodesics on $X$ is a dense subset of $X$, which was first proved by Boshernitzan-Galperin-Kr\"uger-Troubetzkoy \cite{bgkt1998} (for surfaces coming from unfolding billiards on rational polygons), using quite different methods. 
We could also take $\sigma_c$ to be any probability measure on $X$ that is supported on and uniform over some individual periodic geodesic that is part of the cylinder, then take $\sigma_R$ to be the average of the $\sigma_c$ over all $c$ corresponding to periodic geodesics of length at most $R$, and conclude that $\sigma_R$ converges weakly to area measure on $X$. 
The proofs of these results are almost exactly the same as that of Theorem \ref{thm:equidist}, so we will only give the proof of Theorem \ref{thm:equidist} (one also needs as input Masur's quadratic lower bound for cylinders, Theorems 1 and 2 in Masur \cite{masur1988}).  
\label{rmk:cyl}
\end{rmk}

One can also ask whether the \emph{angles} of saddle connections on every surface $X$ become equidistributed on $S^1$ as the saddle connections become longer and longer.  We do not resolve this question, but the theorem below says that in a certain sense the distribution of the angles can't be too far from the Lebesgue measure on the circle.  This result, together with a result of Kerckhoff-Masur-Smille (\cite{kms1986}) that the directional flow on a surface is uniquely ergodic in almost every direction, will be the key ingredients in the proof of Theorem \ref{thm:equidist}.  

\begin{thm}
  Let $X$ be a translation surface, and let $\nu_R$ be the probability measure on $S^1$ given by normalized counting measure on the angles of saddle connections of length at most $R$ on $X$.  Let $\nu$ be a weak limit of a subsequence $\{\nu_{R_n}\}$, $R_n \to \infty$.  Then $\nu$ is in the measure class of Lebesgue measure on $S^1$.  
\label{thm:angle-lebesgue}
\end{thm}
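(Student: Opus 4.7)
The plan is to show $\nu \sim \operatorname{Leb}$ by establishing uniform angular-sector bounds on the saddle connection counting function. Writing $\nu_R(I) = N_I(X,R)/N(X,R)$ with $N_I(X,R)$ counting saddle connections of length at most $R$ with angle in $I$, Masur's two-sided quadratic bound $c_0 R^2 \le N(X,R) \le C_0 R^2$ reduces the theorem to finding constants $c_1, C_1 > 0$ such that
$$c_1 |I| R^2 \le N_I(X,R) \le C_1 |I| R^2$$
for every arc $I \subset S^1$ and every sufficiently large $R$ (along a subsequence is enough). These bounds pass to the weak limit by the Portmanteau theorem, giving $(c_1/C_0) |I| \le \nu(I) \le (C_1/c_0) |I|$ for every arc, hence $\nu$ is mutually absolutely continuous with Lebesgue measure.

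To obtain the sector bounds I would renormalize using the Teichm\"uller geodesic flow $g_t = \operatorname{diag}(e^t, e^{-t})$. For an arc $I$ of angular width $\alpha$ centered at a direction $\theta$, the linear map $g_t r_{-\theta}$ acts on holonomy: a saddle connection of $X$ with holonomy $v = re^{i\psi}$, $\psi \in I$, $r \le R$, becomes a saddle connection of $Y := g_t r_{-\theta} X$ of length $\approx e^t r$ and angle $\approx e^{-2t}(\psi - \theta)$. Choosing $e^{2t} \sim 1/\alpha$ transforms the sector on $X$ of width $\alpha$ and radius $R$ into a (thinner) sector on $Y$ of width $\sim \alpha^2$ and radius $\sim R/\sqrt{\alpha}$, preserving the area $\sim \alpha R^2$. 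Thus $N_I(X,R)$ equals a sector count on $Y$ at the renormalized scale.

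For this reduction to be useful, $Y$ must lie in a compact subset of the stratum, and this is where Kerckhoff-Masur-Smillie enters. A.e.\ direction is uniquely ergodic, and by Masur's recurrence criterion, for such $\theta$ the orbit $\{g_s r_{-\theta} X\}$ returns infinitely often to a compact set $K \subset \Omega\mathcal{M}_g$. Extracting $t_n \to \infty$ with $Y_n := g_{t_n} r_{-\theta} X \in K$, and applying a uniform sector-count bound $c(K) |I'| R'^2 \le N_{I'}(Y, R') \le C(K) |I'| R'^2$ on $K$, one pulls back to get $N_{I_n}(X, R) \asymp \alpha_n R^2$ along a sequence of arcs $I_n$ of widths $\alpha_n = e^{-2 t_n} \to 0$ centered at $\theta$. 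Since almost every $\theta$ is a valid base direction, this controls the upper and lower densities of $\nu$ at a.e.\ point of $S^1$, enough to conclude $\nu \sim \operatorname{Leb}$ via the Radon-Nikodym theorem.

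The principal obstacle is proving the sector bound uniformly over compact sets of the stratum. Masur's original argument controls only the disk count, not its angular distribution, so one needs a compact-set version of a Siegel-Veech or Eskin-Masur estimate (obtainable by averaging, or via Masur's cylinder-based lower bound applied to thin directional sectors). Granting this sector-count lemma on $K$, the renormalization together with KMS-driven recurrence assemble into the proof.
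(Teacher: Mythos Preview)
Your overall architecture is the same as the paper's: reduce the measure-class statement to two-sided angular sector bounds
\[
c_1 |I| R^2 \le N_I(X,R) \le C_1 |I| R^2,
\]
and then pass to the weak limit.  That reduction is fine.  The problem is how you propose to \emph{obtain} the sector bounds.

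Your renormalization step is circular.  You take a sector on $X$ of width $\alpha$ and radius $R$, apply $g_t r_{-\theta}$ with $e^{2t}\sim 1/\alpha$, and land on a sector on $Y$ of width $\sim\alpha^2$ and radius $\sim R/\sqrt\alpha$.  You have therefore traded a thin-sector count for an \emph{even thinner} sector count on a different surface.  To finish, you now need exactly the statement you set out to prove --- a sector bound $c(K)|I'|R'^2 \le N_{I'}(Y,R') \le C(K)|I'|R'^2$ uniform over $Y\in K$ and over \emph{arbitrarily thin} $I'$.  Your proposed sources for this (``averaging, or Masur's cylinder-based lower bound'') do not deliver it: Masur's arguments control disk or half-plane counts, not thin angular sectors, and the Siegel--Veech/Eskin--Masur averaging gives an a.e.\ statement, not a uniform one on compact sets.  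There is also a mismatch between the recurrence times $t_n$ (which depend on the chosen direction $\theta$) and the radii $R_n$ along which the weak limit is taken; you would need these to cohere to pass to $\nu$.

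The paper avoids this circularity by never fixing a single recurrent direction.  For the upper bound it integrates over $\theta\in I$ and compares $N(X,R,I)-N(X,R/2,I)$ to $\int_{I'} N(g_{\log R}r_\theta X,2)\,d\theta$; the integrand is a \emph{bounded-radius} count, controlled pointwise by $1/\ell(g_{\log R}r_\theta X)^{1+\delta}$, and the integral of this over $I'$ is handled by a system of integral inequalities (an arc version of the Eskin--Masur recurrence estimate) yielding the crucial additive term $b\cdot|I|$.  For the lower bound the paper produces near-horizontal saddle connections via $g_t$, gets a Ces\`aro-type bound $\sum_{|s|\le R,\ \mathrm{angle}(s)\in I} 1/|s| \ge c|I|R$, and then bootstraps this against the already-proven upper bound.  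Kerckhoff--Masur--Smillie plays no role in either bound; it enters only later, in the equidistribution theorem.
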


Proving this theorem involves showing lower and upper bounds on the number of saddle connections whose angle lies in a given interval; these bounds are the content of Theorem \ref{thm:lower-bound} and Theorem \ref{thm:upper-bound} below.  Central to the proof is the ``system of integral inequalities'' approach pioneered by Eskin-Margulis-Mozes (\cite{emm1998}) in the lattice context, and brought to the translation surfaces context by Eskin-Masur (\cite{em2001}).  

\begin{rmk}
Using the ``Equidistribution for sectors'' result of Eskin-Mirzakhani-Mohammadi (Theorem 2.6 of \cite{emm2015}), one can show that, assuming the limit of $\nu_R$ exists as $R\to\infty$, then it must be Lebesgue measure.  But it is not known whether the limit exists in general.  
\end{rmk}


\begin{rmk}
  All of the results above also hold with a pair $X=(M,q)$, where $q$ is a holomorphic \emph{quadratic} differential (also known as a \emph{half-translation surface}).  This can be seen by taking the orientation double cover, on which $q$ lifts to a square of a holomorphic 1-form, and applying the results above here.  One can also directly modify the proofs to work for quadratic differentials without much difficulty.  
\end{rmk}

\subsection{Upper and lower bounds.}

We will derive Theorem \ref{thm:angle-lebesgue} from the following more precise upper and lower bounds. 

 Let $N(X,R,I)$ be the number of saddle connections on $X$ of length at most $R$ whose holonomy angle lies in the interval $I \subset S^1$, and let $N(X,R):=N(X,R,[0,2\pi])$ be the total number of saddle connections of length at most $R$.

 \begin{thm}[Upper bound]
  Given $\mathcal{H}$, there exists a constant $c_2$ such that for any $X\in\mathcal{H}$ and interval $I\subset S^1$, there exists a constant $R_0(X,I)$ such that for all $R>R_0(X,I)$,   
$$N(X,R,I) \le c_2 \cdot |I| \cdot  R^2.$$  
\label{thm:upper-bound}
\end{thm}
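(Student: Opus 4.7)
The plan is to use the $SL_2(\mathbb{R})$-action on the stratum to convert counting saddle connections in a thin sector into counting all saddle connections with length bounded by $R\sqrt{|I|}$ on a deformed surface $Y$, and then invoke a uniform quadratic upper bound of Eskin-Masur type (the establishment of which, via the system of integral inequalities, is the main technical work).

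\textbf{Step 1 (Reduction to a centered interval).} Let $\alpha$ be the midpoint of $I$. Since the rotation $r_{-\alpha}$ acts on $(M,\omega)$ as an isometry that multiplies $\omega$ by $e^{-i\alpha}$, it shifts all holonomy angles by $-\alpha$ without changing lengths. Thus $N(X, R, I) = N(r_{-\alpha}X, R, [-|I|/2, |I|/2])$, and we may assume $I = [-\theta, \theta]$ with $\theta = |I|/2$.

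\textbf{Step 2 (Sector-to-disk via Teichm\"uller flow).} Apply $g_s = \operatorname{diag}(e^s, e^{-s})$ with $s = \tfrac{1}{2}\log\tan\theta$ (which is negative for small $|I|$) and set $Y := g_s r_{-\alpha} X$. A saddle connection of length $r \le R$ and angle $\phi \in [-\theta, \theta]$ on $X$ has holonomy $(e^s r\cos\phi, e^{-s} r\sin\phi)$ on $Y$; the choice of $s$ is optimized to equalize the two terms, giving the length bound
\[
r\sqrt{\tan\theta\cos^2\phi + \cot\theta\sin^2\phi} \;\le\; r\sqrt{\sin(2\theta)} \;\le\; R\sqrt{|I|}.
\]
Since $g_s r_{-\alpha}$ is a bijection between saddle connections on $X$ and on $Y$,
\[
N(X, R, I) \;\le\; N(Y, R\sqrt{|I|}).
\]

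\textbf{Step 3 (Uniform quadratic upper bound).} I invoke the following bound: there exists $c_0 = c_0(\mathcal{H})$ such that for every $Y \in \mathcal{H}$, there is a threshold $T_0(Y)$ with $N(Y, T) \le c_0 T^2$ for all $T \ge T_0(Y)$. Taking $T = R\sqrt{|I|}$ and setting $R_0(X, I) := T_0(Y)/\sqrt{|I|}$ yields $N(X, R, I) \le c_0 R^2 |I|$ for all $R > R_0(X, I)$, so $c_2 := c_0$ works.

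\textbf{Main obstacle.} The essential content is Step 3: producing a constant $c_0$ depending only on the stratum and not on the individual surface $Y$. A naive Masur-style disk-packing argument gives only $N(Y, T) \le c(Y) T^2$ with $c(Y)$ blowing up as $Y$ approaches the cusp (where $Y$ has arbitrarily short saddle connections). The Eskin-Margulis-Mozes / Eskin-Masur system of integral inequalities resolves this by constructing Siegel-Veech-type functions that are sensitive to short cycles, proving that they satisfy integral inequalities under the Teichm\"uller flow preventing them from blowing up too fast, and thereby bootstrapping a uniform pointwise bound (at the cost of the surface-dependent threshold $T_0(Y)$, which accounts for the $R_0(X,I)$ dependence in the statement).
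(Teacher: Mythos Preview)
Your reduction is correct and takes a genuinely different route from the paper. The paper proves the theorem directly for arcs by extending the Eskin--Masur machinery: Lemma~\ref{lemma:trapezoid} (an arc version of the trapezoid lemma), Lemma~\ref{lemma:short_sc} (the bound $N(X,2)\le C/\ell(X)^{1+\delta}$), and then the key Proposition~\ref{prop:circle-decay}, which controls $\int_I \ell(g_Tr_\theta X)^{-(1+\delta)}\,d\theta$ over a \emph{fixed} interval $I$ via the system of integral inequalities. You instead observe that the $SL_2(\mathbb{R})$-action maps the thin sector of aperture $|I|$ and radius $R$ into a full disk of radius $R\sqrt{|I|}$ on the deformed surface $Y=g_sr_{-\alpha}X$, so the general statement reduces to the $I=S^1$ case, which is exactly the Eskin--Masur uniform bound. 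Your argument is shorter and avoids the arc-specific complications (the interval-swapping $I\mapsto J$ in Lemma~\ref{lemma:shadowing}, the dependence of $t_0$ on $|I|$, etc.). On the other hand, the paper's direct approach yields the stronger Proposition~\ref{prop:circle-decay} as a byproduct, which is needed independently (e.g.\ for the Siegel--Veech constant convergence in \cite{dozier2018}); your reduction does not produce this. One small gap: your Step~2 bound $f(\phi)=\tan\theta\cos^2\phi+\cot\theta\sin^2\phi\le\sin(2\theta)$ holds only for $\theta<\pi/4$ (for larger $\theta$ the maximum is $\tan\theta$ at $\phi=0$), so you should explicitly dispose of the case $|I|\ge\pi/2$ by the trivial inequality $N(X,R,I)\le N(X,R,S^1)\le 4c_0|I|R^2$.
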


\begin{thm}[Lower bound]
  Given $\mathcal{H}$, there exists a constant $c_1>0$, such that for any $X\in \mathcal{H}$ and any interval $I\subset S^1$, there exists $R_0(X,I)$ such that for all $R>R_0(X,I)$,   
$$N(X,R,I) \ge c_1 \cdot |I| \cdot R^2. $$
\label{thm:lower-bound}
\end{thm}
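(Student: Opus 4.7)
The strategy is renormalization via the $SL_2(\R)$-action on the stratum $\mathcal{H}$: the element $g_t = \operatorname{diag}(e^t, e^{-t})$ is used to convert a narrow-sector count on $X$ into a coarse-region count on a transformed surface $Y = g_tX$, where one can hope to apply a uniform quadratic lower bound.

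By the rotational symmetry $N(r_\phi X, R, I) = N(X, R, I-\phi)$, we may rotate $X$ and assume $I = (-\delta/2, \delta/2)$ is centered on the horizontal, where $\delta := |I|$. Set $Y := g_tX$ with $e^t = \sqrt{\delta}$, so that each saddle connection of $X$ with holonomy $z = (x,y)$ corresponds to one of $Y$ with holonomy $w = g_tz = (\sqrt\delta\,x,\, y/\sqrt\delta)$. A short calculation checks that if $w$ lies in the intersection of the disk $B(cR\sqrt\delta)$ with the fixed cone $J := \{|w_2| \leq |w_1|/2\}$ (whose angular width is bounded below by an absolute constant), then $g_{-t}w$ satisfies $|g_{-t}w| \leq R$ and $\arg(g_{-t}w) \in I$. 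Hence
$$N(X, R, I) \;\geq\; N(Y,\ cR\sqrt\delta,\ J),$$
and the theorem reduces to producing a uniform quadratic lower bound
$$N(Y, L, J) \;\geq\; c' L^2 \qquad \forall\,Y \in \mathcal{H},\ L > L_0(Y, J),$$
with $c'$ depending only on $\mathcal{H}$ and the fixed cone $J$.

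This uniform bound is the main obstacle: Masur's classical lower bound $N(Y, L) \geq c(Y)L^2$ is inadequate because $c(Y)$ can degenerate as $Y$ approaches the boundary of $\mathcal{H}$. Following the system-of-integral-inequalities framework of Eskin-Margulis-Mozes, transferred to translation surfaces by Eskin-Masur, one constructs a proper function $\alpha$ on $\mathcal{H}$ (measuring depth into the cusps) satisfying an inequality of the form
$$\int_0^{2\pi}\alpha(g_s r_\theta Y)\,d\theta \;\leq\; A\,\alpha(Y) + B,$$
with constants $A < 1$ and $B$ depending only on $\mathcal{H}$. Such an inequality forces the $g_s$-orbit of $Y$ to spend a positive proportion of time in a compact subset $K \subset \mathcal{H}$. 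On $K$ the Siegel-Veech transform of $\chi_{J \cap B(L)}$ is pointwise bounded below by a uniform multiple of $L^2$; transferring this bound back to $Y$ along $g_{-s}$ yields the desired uniform lower bound. The delicate step is choosing $\alpha$ (and the associated test functions on $\mathbb{R}^2$) so that the constants are genuinely uniform in $Y$, and the estimates are compatible with the restriction to the cone $J$ rather than the full circle.
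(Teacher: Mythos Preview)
Your reduction step is correct and pleasant: applying $g_t$ with $e^t=\sqrt{|I|}$ does convert the narrow-sector count $N(X,R,I)$ into a fixed-cone count $N(Y,cR\sqrt{|I|},J)$ on $Y=g_tX$, so that the theorem for arbitrary $I$ follows from the theorem for a single fixed interval $J$ of absolute width. But this reduction does not reduce the difficulty --- the statement $N(Y,L,J)\ge c'L^2$ with $c'=c'(\mathcal{H})$ and threshold $L_0(Y)$ is exactly Theorem~\ref{thm:lower-bound} for that one interval, and that is still the whole problem.

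The gap is in your proposed proof of this base case. The Eskin--Masur integral inequality $\int_0^{2\pi}\alpha(g_sr_\theta Y)\,d\theta\le A\alpha(Y)+B$ is an \emph{upper}-bound and recurrence mechanism; it does not produce lower bounds on counts. Concretely: your assertion that ``on $K$ the Siegel--Veech transform of $\chi_{J\cap B(L)}$ is pointwise bounded below by a uniform multiple of $L^2$'' \emph{is} the lower bound you are trying to prove, restricted to a compact set --- so this step is circular unless you supply an independent argument there. And even granting such a bound at some point $g_sr_\theta Y\in K$, ``transferring back along $g_{-s}$'' maps the cone $J$ to $g_{-s}J$, which is a different (and $s$-dependent) sector, so you do not recover $N(Y,L,J)$.

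The paper's route is quite different and worth internalizing. It does not attempt a direct lower bound via recurrence. Instead it proves an elementary pointwise fact (Lemma~\ref{lemma:near-horiz}): since $\ell(g_tX)\le 2/\sqrt{\pi}$ universally, pulling the shortest saddle connection of $g_tX$ back to $X$ produces, for each large $R$, a saddle connection $s$ with $|\mathrm{angle}(s)|\le C/(R|s|)$. Rotating and covering gives the Ces\`aro-type bound $\sum_{|s|\le R,\ \mathrm{angle}(s)\in I}1/|s|\ge c|I|R$ (Lemma~\ref{lemma:cesaro-lower}). A purely combinatorial sequence lemma (Lemma~\ref{lemma:cesaro-quad}) then upgrades this to $N(X,R,I)\ge c_1|I|R^2$, using the quadratic \emph{upper} bound of Theorem~\ref{thm:upper-bound} as a black box. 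The integral-inequality machinery enters only through that upper bound, never directly in the lower-bound argument.
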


A somewhat weaker version of the lower bound, where the constant $c_1$ can depend on the surface $X$, is proved by Marchese-Trevi{\~n}o-Weil (\cite{mtw2016} Theorem 1.9, item (4) and Proposition 4.5).

\begin{proof}[Proof of Theorem \ref{thm:angle-lebesgue} (assuming Theorem \ref{thm:lower-bound} and Theorem \ref{thm:upper-bound})]
  We need to show that the sets of measure zero are the same with respect to $\nu$ and the Lebesgue measure $\lambda$. 

Let $I$ be any interval in $S^1$.  Then, applying Theorem \ref{thm:upper-bound} to the numerator and Theorem \ref{thm:lower-bound} to the denominator in the limit below, we get 
$$\nu(I) = \lim_{n\to\infty}\nu_{R_n}(I) = \lim_{n\to\infty} \frac{N(X,R_n,I)}{N(X,R_n)} \le \frac{c_2 |I|}{2\pi c_1X},$$
i.e. the $\nu$ measure of any interval is at least some fixed constant multiple of the Lebesgue measure.  Since intervals generate the Borel $\sigma$-algebra, we have that $\nu(E) \ge \frac{c_2 |E|}{2\pi c_1}$ for all Borel sets $E$; in particular if $\nu(E)=0$, then $\lambda(E)=0$.   

On the other hand, applying Theorem \ref{thm:lower-bound} to the numerator and Theorem \ref{thm:upper-bound} to the denominator, gives 
$$\nu(I) = \lim_{n\to\infty} \frac{N(X,R_n,I)}{N(X,R_n)} \ge \frac{c_1 |I|}{2\pi c_2},$$
and so we see that, for any Borel set $E$, if $\lambda(E)=0$, then $\nu(E)=0$, which completes the proof.  In fact we have shown that not only do the Radon-Nikodym derivatives of each measure with respect to the other exist, they are also bounded away from infinity and from $0$, with the bounds depending only on the stratum $\mathcal{H}$.   

\end{proof}

\subsection{Analogy with hyperbolic surfaces.}  
Part of the motivation for this work was to prove analogs of certain classical equidistribution results for geodesics on a closed hyperbolic surface $Y$.  In particular, an analog of the version of Theorem \ref{thm:equidist} for periodic geodesics (see Remark \ref{rmk:cyl})  holds for hyperbolic surfaces: the probability measures on $Y$ coming from averaging the uniform measures over all periodic geodesics on $Y$ of length at most $R$ tend to the hyperbolic area measure on $Y$, as $R\to\infty$ (\cite{bowen1972}).  In fact, a stronger result holds: the measures on the unit tangent bundle $T_1Y$ converge to the Liouville measure on $T_1Y$ (the analogous question for translation surfaces is Question \ref{question:tangent-conv}).

In this paper we consider saddle connections and cylinders of periodic geodesics.  One could also consider \emph{flat closed geodesics}: for every homotopy class of closed loop on $X$, we can tighten the loop to some flat closed geodesic, which will be either (i) a periodic geodesic that is part of a cylinder (in which case the geodesic representative is non-unique, since any geodesic in the cylinder is also a representative), or (ii) a union of saddle connections.  These flat closed geodesics are \emph{not} expected to be equidistributed on the surface with respect to Lebesgue measure.  



\subsection{Previous work.}   The study of saddle connections began in earnest with the work of Masur (\cite{masur1988} and \cite{masur1990}), who showed that, for a fixed $X$, there are quadratic upper and lower bounds for the growth of $N(X,R)$ in terms of $R$.  Veech (\cite{veech1989}) showed that in fact there is an exact quadratic asymptotic (i.e. the lower and upper bounds coincide) for a certain class of surfaces having large symmetry groups, now known as Veech surfaces. Later, Veech exploited the analogy between a stratum of translation surfaces $X$ and the space of lattices $SL_n(\mathbb{R})/SL_n(\mathbb{Z})$, to show that there is a constant, the Siegel-Veech constant, that governs the number of saddle connections of length at most $R$, averaged over all surfaces in the stratum (see \cite{veech1998}).  Eskin-Masur (\cite{em2001}, Theorem 1.2) showed that \emph{almost every} surface in 
the stratum has exact quadratic growth asymptotics for saddle connections, with the constant given by the Siegel-Veech constant of the stratum.  One of the key inputs is an ergodic theorem of Nevo (\cite{nevo2017}) that gives equidistribution of large circles in $\mathcal{H}$ centered at almost every point.  

Vorobets showed that for \emph{almost every} $X$ in a stratum $\mathcal{H}$, the angles of saddle connections become equidistributed i.e. the measures $\nu_R$ defined in Theorem \ref{thm:angle-lebesgue} converge to Lebesgue measure on $S^1$ (\cite{vorobets2005}, Theorem 1.9).

\subsection{Outline.}
\begin{itemize}
\item In Section \ref{sec:upper-bound} on upper bounds, we state Proposition \ref{prop:circle-decay}, a key technical tool related to recurrence of pieces of $SL_2(\mathbb{R})$ orbits, and then derive Theorem \ref{thm:upper-bound} (Upper bound) assuming this proposition, following the Eskin-Masur approach.  

\item In Section \ref{sec:lower-bound}, we derive Theorem \ref{thm:lower-bound} (Lower bound).  We follow a strategy pioneered by Masur, and the proof uses Theorem \ref{thm:upper-bound} (Upper bound) as a blackbox.  This section is otherwise independent of Section \ref{sec:upper-bound} and later sections.  

\item Section \ref{sec:proof-cor} derives Theorem \ref{thm:equidist} (Equidistribution) from Theorem \ref{thm:angle-lebesgue} on angle measures, using disintegration of measure and the result of Kerckhoff-Masur-Smillie.  

\item Finally in Section \ref{sec:system-inequalities} we prove the technical Proposition \ref{prop:circle-decay} using the ``system of integral inequalities'' approach.  

\item Section \ref{sec:open-questions} presents some open questions related to the theorems we prove.

\end{itemize}  

\subsection{Acknowledgments.} 
I am very grateful to Maryam Mirzakhani, my thesis advisor, for guiding me with numerous stimulating conversations and suggestions. I would also like to thank Alex Wright for many helpful discussions and detailed feedback.  Finally, I would like to thank the anonymous referee for many helpful comments that improved the exposition.  

\section{Proof of Theorem \ref{thm:upper-bound} (Upper bound)}
\label{sec:upper-bound}

We will use the Eskin-Masur counting strategy  (see \cite{em2001}; an overview of the main ideas is given in \cite{eskin2006}).  We do everything with arcs instead of the whole circle.  Even though we are interested in saddle connections on an individual surface, we will end up studying the moduli space of translation surfaces.  The guiding philosophy is to translate the question about saddle connections of growing length on a fixed translation surface into a question about saddle connections of bounded length on a varying family of surfaces (this idea is often referred to as a type of ``renormalization'').  

\paragraph{Action of $SL_2(\mathbb{R})$.}

There is an action of $SL_2(\mathbb{R})$ on each stratum $\mathcal{H}$ of translation surfaces which will play a central role in our discussion.  To see the action, we first observe that by cutting along saddle connections, we can represent every translation surface as a set of polygons in the plane, such that every side is paired up with a parallel side of equal length.  Since $SL_2(\mathbb{R})$ acts on polygons in the plane, preserving the property of a pair of sides being parallel and equal length, the group acts on the space of translation surfaces.  We will work mostly with elements of the following form:
\begin{eqnarray*}
  g_t =
  \left(\begin{matrix}
    e^{-t} & 0 \\
    0 & e^t
  \end{matrix}\right), \text{  \ \    } r_{\theta} = \left(
        \begin{matrix}
          \cos \theta & -\sin\theta \\
          \sin \theta & \cos\theta
        \end{matrix}\right).
\end{eqnarray*}
The $g_t$ generate Teichm\"uller geodesics, while the action of $r_{\theta}$ rotates the vertical direction by angle $\theta$.  

We first state a proposition that is the key technical tool, and whose proof (given in Section \ref{sec:system-inequalities}) will take up most of the paper. 

Let $\ell(X)$ denote the length of the shortest saddle connection on $X$. 

\begin{prop}
Fix $\mathcal{H}$, and $0<\delta<1/2$.  There is a function $\alpha: \mathcal{H} \to \mathbb{R}$ and a constant $b$ such that for any interval $I\subset S^1$ there exists a constant $c_{I}$ such that for any $X \in \mathcal{H}$, 
$$\int_I \frac{1}{\ell(g_Tr_{\theta}X)^{1+\delta}} d\theta < c_I\cdot e^{-(1-2\delta) T}\alpha(X) + b\cdot |I|,$$
for all $T\ge 0$.  
\label{prop:circle-decay}
\end{prop}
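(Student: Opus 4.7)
Plan: I will follow the ``system of integral inequalities'' strategy introduced by Eskin--Margulis--Mozes in the lattice setting and adapted to strata by Eskin--Masur (\cite{em2001}). The idea is to replace $1/\ell^{1+\delta}$ by a larger auxiliary function $\alpha:\mathcal{H}\to\R$ that dominates it pointwise and contracts on average under the $g_t$-flow composed with rotations.

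The function $\alpha$ should \emph{not} simply equal $1/\ell^{1+\delta}$: on a surface with several short parallel saddle connections (e.g., the boundary components of a thin cylinder) the shortest saddle connection length stays small for a whole arc of angles after applying $g_t r_\theta$, spoiling the contraction. Following Eskin--Masur, one defines $\alpha(X) = \sum_V |v_V|^{-(1+\delta)}$ indexed by admissible isotropic subspaces $V$ of the relative cohomology, with $v_V$ the shortest holonomy in $V$, organized into a hierarchy of flags so that each configuration of simultaneously short parallel saddle connections contributes a separate term. By design $\alpha(X) \ge c\,\ell(X)^{-(1+\delta)}$.

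The heart of the argument is a one-step integral inequality for $\alpha$. The starting point is a single-vector calculation: for $v\in\mathbb{C}$ of length $L$, the substitution $\eta = e^{2t}\psi$ near the horizontal angles $\psi = 0,\pi$ (where the denominator is smallest) yields
\begin{equation*}
\int_J \frac{d\theta}{|g_t r_\theta v|^{1+\delta}} \le C_\delta\, e^{-(1-\delta)t}\, L^{-(1+\delta)}
\end{equation*}
for any arc $J\subset S^1$. Summing over all saddle connections of $X$, using the $O(R^2)$ Siegel--Veech-type bound on $N(X,R)$ to absorb the tail of the sum (this is where an extra factor of $\delta$ in the exponent is spent, degrading $1-\delta$ to $1-2\delta$), and invoking the flag structure of $\alpha$ to handle collections of parallel short saddle connections, one obtains
\begin{equation*}
\int_I \alpha(g_T r_\theta X)\, d\theta \le c_I\, e^{-(1-2\delta)T}\,\alpha(X) + b\,|I|,
\end{equation*}
which, together with $1/\ell^{1+\delta} \le \alpha$, gives the proposition.

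The main obstacle will be proving the one-step inequality, and in particular ensuring that the flag structure of $\alpha$ is compatible with the $g_t r_\theta$-action in a ``self-improving'' way: one must show that the flag contributions arising after applying $g_T r_\theta$ can be dominated by flag contributions attached to $X$ itself (up to the decaying term and the $|I|$-additive error). This bookkeeping is precisely what the Eskin--Masur machinery is designed to handle, and is where the bulk of the technical work in Section \ref{sec:system-inequalities} is expected to lie; by contrast, the single-vector circle-average calculation is essentially elementary.
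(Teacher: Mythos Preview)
Your high-level philosophy is right --- an auxiliary function dominating $1/\ell^{1+\delta}$ that contracts on circle averages --- but the mechanism you describe for the key steps does not match what actually happens, and two of your claimed ingredients would fail. First, you propose to ``sum the single-vector bound over all saddle connections of $X$'' and absorb the tail using an $O(R^2)$ count $N(X,R)$. This is both circular (the quadratic upper bound on $N(X,R)$ in this paper is \emph{derived from} Proposition~\ref{prop:circle-decay}) and not how the auxiliary function is built: the paper's $\alpha$ is a finite sum $\sum_{j\ge k}\alpha_j(X)$ indexed by complexities of saddle-connection \emph{complexes}, not an infinite sum over saddle connections, and the pointwise bound relating different $\alpha_j$ comes from a geometric complex-building argument (Proposition~\ref{prop:comb-complex}), not from counting. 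Second, your explanation of the exponent degradation $1-\delta \to 1-2\delta$ is incorrect: it does not come from a tail sum. It arises because the single-step inequality (Proposition~\ref{prop:bounded-size}) over a circle of fixed radius $\tau$ has the form $\operatorname{Ave}_\tau(\alpha_k) \le Ce^{-\tau(1-\delta)}\alpha_k + w_\tau\sum_{j>k}\alpha_j + b_\tau$, with $w_\tau$ growing in $\tau$; one cannot take $\tau=T$ directly. Instead the paper \emph{iterates} this in steps of fixed size $\tau$, using a shadowing lemma (Lemma~\ref{lemma:shadowing}) to compare arcs at radii $t$ and $t+\tau$. The iteration introduces a multiplicative constant $c$ at each step, and absorbing $c$ into the exponential (choosing $\tau$ large so $c<e^{\tau\delta}$) is exactly what costs the extra $\delta$.

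You also do not address the two features that are genuinely new relative to the full-circle case in \cite{em2001}: the additive term $b\cdot|I|$ (which in the paper originates from the constant $b_\tau$ in the bounded-radius inequality, propagated through the iteration as a geometric series and multiplied by $|J|=|I|$), and the arc-specific complications in the shadowing step (the interval $I$ must be replaced by a shifted interval $J$ of the same length at each step, and there is a threshold $t_0(\tau,|I|)$ below which shadowing fails, forcing a separate crude bound near the center that produces the $I$-dependence of $c_I$). These are not mere bookkeeping; they are the content specific to this proposition, and your outline does not account for them.
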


Some related results appear in \cite{em2001} (Theorem 5.2 and Proposition 7.2) and \cite{athreya2006}.  The new elements here are (i) integration over a fixed interval of angles, rather than the whole interval or a changing interval, and (ii) the resulting additive term $b\cdot |I|$.   This proposition is also the key tool needed to prove a result on convergence of Siegel-Veech constants in \cite{dozier2018}; for that application it is essential that the constant $b$ does not depend on the surface $X$.   For all applications, it is crucial that the function $\alpha$ does not depend on $T$.   Section \ref{sec:system-inequalities} is devoted to proof of this proposition, and a generalization, using the ``system of integral inequalities'' approach.  

The proposition should be thought of as a recurrence result for arcs of $g_tr_{\theta}$ ``circles''.  It says that such arcs centered about a point do not have too much mass in the ``thin'' part of the stratum (the part where there is a short saddle connection).  If the center point is itself deep in the thin part, then the radius of the circle containing the arc needs to be taken to be large.  


We now state several intermediate lemmas that we use to execute the Eskin-Masur counting strategy.  

\begin{lemma}
  For any surface $X\in \mathcal{H}$ and interval $I\subset S^1$, we can find an interval $I'\subset S^1$ with $|I'| = |I|$, such that 
  $$N(X,R,I) -N(X,R/2,I) \le 2 R^2\int_{I'} N(g_{\log R}r_{\theta}X,2) d\theta.$$
\label{lemma:trapezoid}
\end{lemma}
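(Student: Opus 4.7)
The strategy is a Fubini argument that swaps the sum over saddle connections with the integral over rotation angles. Since $SL_2(\R)$ acts on holonomy vectors by matrix multiplication and sends saddle connections on $X$ bijectively to those on $g_{\log R}r_\theta X$, a saddle connection $s$ on $X$ of length $L$ and angle $\phi$ has image with holonomy
\[
g_{\log R}r_\theta v_s = \bigl(\tfrac{L}{R}\cos(\phi+\theta),\ LR\sin(\phi+\theta)\bigr).
\]

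The first step is to identify which $\theta$ make this image have length at most $2$. For $R/2<L\le R$ the horizontal coordinate satisfies $|L/R|\le 1$ automatically, so the length-$\le 2$ condition reduces to an inequality of the form $|\sin(\phi+\theta)|\le 1/(LR)$; indeed, if this holds then the length squared is at most $1+1=2$. The solution set in $S^1$ consists of two small arcs centered at $\theta\equiv -\phi\pmod{\pi}$, each of width at least $2/(LR)\ge 2/R^2$ (since $L\le R$).

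Next, I would choose $I'=-I:=\{-\phi \bmod 2\pi:\phi\in I\}$, an interval of the same measure as $I$. For each $\phi\in I$ the point $-\phi$ lies in $I'$, so at least half of the small arc centered at $-\phi$ lies inside $I'$; writing $\chi_s(\theta)$ for the indicator of $|g_{\log R}r_\theta v_s|\le 2$, this yields $\int_{I'}\chi_s(\theta)\,d\theta \ge 1/(LR)\ge 1/R^2 \ge 1/(2R^2)$. Since the images on $g_{\log R}r_\theta X$ of distinct saddle connections on $X$ are distinct, $\sum_{s} \chi_s(\theta) \le N(g_{\log R}r_\theta X, 2)$, where $s$ ranges over the saddle connections on $X$ with $R/2<|s|\le R$ and angle in $I$. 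Summing the above estimate over these saddle connections and applying Fubini gives
\[
\tfrac{1}{2R^2}\bigl(N(X,R,I)-N(X,R/2,I)\bigr) \le \int_{I'} N(g_{\log R}r_\theta X,2)\,d\theta,
\]
which rearranges to the lemma.

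The main technical wrinkle is the boundary effect at $\partial I'$: when $\phi$ is within $O(1/R^2)$ of $\partial I$, the arc of good $\theta$'s straddles $\partial I'$, forcing us to concede a factor of two in the lower bound on $\int_{I'}\chi_s$. This is what forces the multiplicative constant $2R^2$ in the statement rather than the tighter $R^2$ one might naively expect. Beyond this, the argument is a matter of careful bookkeeping with the length-of-holonomy inequality and verifying that the constant $c_0=1$ in $|\sin(\phi+\theta)|\le c_0/(LR)$ indeed gives length at most $2$.
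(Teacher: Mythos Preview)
Your proof is correct and follows essentially the same Fubini/thickening strategy as the paper: for each saddle connection of length in $(R/2,R]$ and angle in $I$, identify a set of rotation angles $\theta$ of measure at least $1/R^2$ for which the image has length $\le 2$, then sum and swap. The only cosmetic differences are that the paper uses the indicator of an annular wedge $W$ (rather than your direct $|\sin(\phi+\theta)|\le 1/(LR)$ condition) and obtains $I'$ as the half of the doubled interval $2I$ on which the integral is larger, whereas you take $I'=-I$; your handling of the boundary via the ``at least half the arc'' observation is in fact slightly cleaner, and (like the paper's version) tacitly needs $R$ large relative to $1/\sqrt{|I|}$ so that the good arc fits inside $I'$.
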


This is a modification of Proposition 3.5 in \cite{em2001}, which deals with the case where $I$ is the whole circle.  

\begin{proof}
  Consider the ``annular wedge'' region 
$$W = \{(r\cos \theta, r \sin \theta) : 1/2 \le r \le 1, |\theta| \le \pi/4 \} \subset \mathbb{R}^2.$$
Let $f: \mathbb{R}^2 \to \mathbb{R}$ be the indicator function of this region.  See Figure \ref{fig:trapezoid}.  

\begin{figure}[h]
\begin{center}
  \includegraphics[scale=2.5]{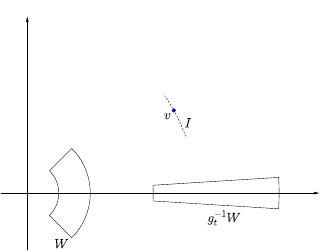}
  \caption{Regions used in proof of Lemma \ref{lemma:trapezoid}}
  \label{fig:trapezoid}
\end{center}
\end{figure}

The idea of the proof is to apply $g_t^{-1}$ to this region, which makes it long and thin, and to rotate using $r_{\theta}$, which will allow us to count vectors with length in a certain range and angle in $I$.  

Let $2I$ be the interval with the same center as $I$ and twice the length, and let $t=\log R$.  We then see that for any $v\in \mathbb{R}^2$, and 
\begin{align*} \int_{2I} f(g_tr_{\theta}v) d\theta \ge
\begin{cases}
  e^{-2t} &\text{ if } e^t/2 \le \|v \| \le e^t \text{ and } \operatorname{angle}(v) \in I \\
  0 &\text{ otherwise.}
\end{cases}
\end{align*}
The above holds because $f(g_tr_{\theta}v) \ne 0 \Leftrightarrow g_tr_{\theta}v \in W \Leftrightarrow r_{\theta}v \in g_t^{-1}W,$ and if this holds for some $\theta\in I$, then it holds for a range of angles of size approximately $2e^{-2t}$, which is (approximately) the angle subtended by $g_t^{-1}W$, as seen from the origin. We use the weaker inequality that the range of angles has size at least $e^{-2t}$, for sufficiently large $t$.  
Note that we use $2I$ instead of $I$ because of edge effects when $\operatorname{angle}(v)$ is close to the boundary of $I$. 

Now summing over the set $\Lambda(X)$ of saddle connection periods for $X$ gives
\begin{align*}
  N(X,I,e^t) - N(X,I,e^t/2) & \le  e^{2t} \sum_{v\in \Lambda(X)} \int_{2I} f(g_tr_{\theta}v) d\theta \\
  & =  e^{2t} \int_{2I} \left(\sum_{v\in \Lambda(X)} f(g_tr_{\theta}v)\right)d\theta \\
  & \le  e^{2t} \int_{2I} N(g_tr_{\theta}X,2) d\theta,
\end{align*}
where in the last inequality we have used the fact that the region $W$ is contained in the ball of radius $2$ centered at the origin.  
Now split $2I$ into two equal length intervals, and let $I'$ be the one on which the integral of $N(g_tr_{\theta}X,2)$ is larger.  Then we get 
$$N(X,I,e^t) - N(X,I,e^t/2) \le 2 e^{2t} \int_{I'} N(g_tr_{\theta}X,2),$$
which is the desired result.

\end{proof}

\begin{lemma}
  For any $\mathcal{H}$, $R>0$, and $\delta>0$, there exists a constant $C$ such that for all $X\in \mathcal{H}$, 
  $$N(X,R) \le \frac{C}{\ell(X)^{1+\delta}},$$
  where $\ell(X)$ denotes the length of the shortest saddle connection on $X$.  
\label{lemma:short_sc}
\end{lemma}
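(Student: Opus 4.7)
My plan is to split into two regimes depending on the size of $\ell(X)$: the ``thick'' part where $\ell(X)$ is bounded below by a fixed constant, and the ``thin'' part where $\ell(X)$ is small.

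In the thick regime $\ell(X) \ge 1$, I would argue by compactness. The set $\mathcal{H}_{\ge 1} := \{Y \in \mathcal{H} : \ell(Y) \ge 1\}$ is compact in the stratum by the Mumford-type compactness criterion for strata of translation surfaces. On this compact set the function $Y \mapsto N(Y, R)$ is upper semicontinuous: for a convergent sequence $Y_n \to Y$ in $\mathcal{H}_{\ge 1}$, saddle connections on $Y_n$ of length $\le R$ either converge to saddle connections on $Y$ of length $\le R$ or else must collapse, but collapse is forbidden by the lower bound $\ell(Y_n) \ge 1$. So $N(X, R) \le K_1$ for all $X$ with $\ell(X) \ge 1$, where $K_1 = K_1(\mathcal{H}, R)$. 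Since area-1 surfaces also satisfy an upper bound $\ell(X) \le M(\mathcal{H})$ (the shortest saddle connection cannot exceed the diameter, which is controlled by the area and stratum), we have $1/\ell(X)^{1+\delta} \ge 1/M^{1+\delta}$, and therefore $N(X, R) \le K_1 \le K_1 M^{1+\delta} / \ell(X)^{1+\delta}$ in this regime.

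In the thin regime $\ell(X) < 1$, I would prove the stronger estimate $N(X, R) \le K_2(\mathcal{H}, R)/\ell(X)$, which implies the lemma since $1/\ell(X) \le 1/\ell(X)^{1+\delta}$ whenever $\ell(X) < 1$ and $\delta > 0$. Let $s_0$ be the shortest saddle connection, of length $\epsilon := \ell(X)$; after applying a rotation, assume $s_0$ is horizontal. Decompose saddle connections of length $\le R$ by angle into those nearly parallel to $s_0$ and those transverse to $s_0$. The transverse saddle connections are bounded in number by a constant depending only on $\mathcal{H}$ and $R$, via an area argument: the parallelogram spanned by $s_0$ and any non-parallel saddle connection has area bounded by a fixed multiple of $\mathrm{area}(X) = 1$, so the transverse component of the holonomy is bounded and only finitely many can fit in a disk of radius $R$ with separated endpoints. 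The nearly parallel saddle connections are counted by the discreteness of holonomies in a thin strip around the $s_0$ direction; the spacing in this strip is at least $\epsilon$ in the direction parallel to $s_0$, giving at most $O(R/\epsilon)$ such holonomies.

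The main obstacle is the thin-part estimate, especially controlling the count of nearly parallel saddle connections. The delicate point is formalizing the claim that holonomies of saddle connections in a thin angular strip around the direction of $s_0$ are spaced by at least $\Omega(\ell(X))$ along the strip: this requires an argument showing that two nearby holonomies differ by a vector that itself arises (up to a bounded decomposition) as a holonomy vector of a genuine saddle connection, and hence has length at least $\ell(X)$. I would expect to make this precise using a cylinder-decomposition analysis in the direction of $s_0$ when such a decomposition exists, and a more careful triangulation argument otherwise, but in all cases one exploits the rigidity of flat structures near a pinching saddle connection.
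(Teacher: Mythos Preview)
The paper does not prove this lemma itself; immediately after the statement it cites \cite{em2001}, Theorem~5.1, and remarks that the proof there ``also uses the system of integral inequalities and involves further technical difficulties.''  That approach is the same machinery developed in Section~\ref{sec:system-inequalities}: one introduces the functions $\alpha_k$ attached to complexes of saddle connections, proves an inductive integral inequality for their circle averages (as in Proposition~\ref{prop:bounded-size}), and extracts the pointwise bound on $\alpha_1 = 1/\ell^{1+\delta}$ from the recursion.  Your proposal is a completely different, direct geometric attack, so the interesting question is whether it can be made to work.

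Your thick-part argument is essentially fine; boundedness of $N(\cdot,R)$ on compact subsets of a stratum is true, even if the upper-semicontinuity justification needs some care (a saddle connection on $Y_n$ can break across a zero in the limit $Y$, so the function is not literally upper semicontinuous, but local finiteness in period coordinates still gives a uniform bound).

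The thin-part argument has a genuine gap, and you have correctly located it.  Two specific problems: first, you aim for the \emph{linear} bound $N(X,R)\le K_2/\ell(X)$, which is strictly stronger than the $(1+\delta)$-bound of Eskin--Masur; the $\delta>0$ in their proof is not cosmetic but is needed for convergence in the recursion, and it is not clear the endpoint $\delta=0$ is even true.  Second, and more concretely, neither half of your dichotomy works as written.  For the ``transverse'' saddle connections, two saddle connections on $X$ do not bound an embedded parallelogram, and there is no general inequality $|\mathrm{hol}(s_0)\wedge \mathrm{hol}(s)|\le C\cdot\area(X)$---already on a flat torus one finds pairs of primitive lattice vectors with arbitrarily large cross product---so the claimed bound on the transverse component fails.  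For the ``nearly parallel'' ones, the difference of two holonomy vectors is only a relative homology class, not in general the holonomy of an actual saddle connection, so the $\ell(X)$-spacing along the strip does not follow.  The real difficulty in the thin part is precisely that several short saddle connections can coexist and interact, and the Eskin--Masur complex formalism (Section~\ref{subsec:complexes}) is designed exactly to organize this interaction by trading a short saddle connection outside the current complex for a jump in complexity.  Your instinct to use a cylinder or triangulation analysis is in the right direction, but making it rigorous amounts to rebuilding the hard part of \cite{em2001}.
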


The above appears as Theorem 5.1 in \cite{em2001}.  We will not reprove it here - the proof also uses the system of integral inequalities and involves further technical difficulties.  

We are now ready to prove the upper bound.  

\begin{proof}[Proof of Theorem \ref{thm:upper-bound}]

  Fix $X\in \mathcal{H}$, $\delta>0$, and an interval $I\subset S^1$.  Applying Lemma~\ref{lemma:trapezoid}, Lemma \ref{lemma:short_sc}, and then Proposition \ref{prop:circle-decay}, we get that 
  there is an interval $I'$ with $|I'|=|I|$ such that
\begin{align*}
  N(X,R,I) -N(X,R/2,I) &\le 2 R^2\int_{I'} N(g_{\log R}r_{\theta}X,2) d\theta \\
  &\le 2R^2 \int_{I'} \frac{C}{\ell(g_{\log R}r_{\theta}X)^{1+\delta}} d\theta \\
  &\le 2 C \cdot  R^2 \left( c_{I'} \cdot e^{-(1-2\delta)\log R} \alpha(X) + b\cdot |I|\right).
\end{align*}
For $R$ sufficiently large (potentially depending on all the other parameters), we have
$$N(X,R,I) - N(X,R/2,I) \le 4Cb \cdot R^2 \cdot |I|. $$
Note that the constant $4Cb$ does not depend on $I$.  We then get the desired estimate for $N(X,R,I)$ by an easy geometric series argument.  
\end{proof}

\section{Proof of Theorem \ref{thm:lower-bound} (Lower bound)}
\label{sec:lower-bound}

A somewhat weaker statement than Theorem \ref{thm:lower-bound}, namely a quadratic lower bound for which the constant $c_1$ depends on the surface $X$, was proven by Marchese-Trevi{\~n}o-Weil (\cite{mtw2016}, Theorem 1.9, item (4) and Proposition 4.5; they make use of the main results of \cite{chaika2011}).  One can also prove this weaker version by closely following the original strategy for proving lower bounds (without restriction on angle) developed by Masur in \cite{masur1988} (Sections $\S$1 and $\S$2).

The proof below, which gives a constant depending only on the stratum,  is a hybrid of the techniques of Marchese-Trevi{\~no}-Weil and those of Masur.  The first part, following Marchese-Trevi{\~no}-Weil, is to prove a Ces\`aro type lower bound for saddle connections, Lemma \ref{lemma:cesaro-lower} below.  The saddle connections are produced in Lemma \ref{lemma:near-horiz} by considering surfaces deformed by $g_t$, and using the fact that there is a universal upper bound on the length of the shortest saddle connection on a translation surface.  Then, following Masur, we combine the Ces\`aro type lower bound with the quadratic upper bound of Theorem \ref{thm:upper-bound} to get the desired lower bound (the quadratic upper bound is used as a blackbox; nothing in its proof is needed).

We will use the following basic, well-known fact, which gives a universal upper bound on the length of the shortest saddle connection. 

\begin{lemma}
  For any unit-area translation surface $X$ in any stratum $\mathcal{H}$, we have the following universal bound on the length of the shortest saddle connection $\ell(X)$:
  $$\ell(X) \le \frac{2}{\sqrt{\pi}}.$$
    \label{lemma:systole}
\end{lemma}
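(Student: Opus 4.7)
The plan is to use a ball-packing argument around a singular point. Since $X$ has genus at least two, there exists a zero $p$ of $\omega$ with cone angle $\theta \ge 2\pi$. Let $\rho$ denote the injectivity radius at $p$: the supremum of $r$ for which the exponential map from the tangent cone at $p$ is injective on the cone disk of radius $r$. For any $r < \rho$, the metric ball $B(p,r) \subset X$ is isometric to the Euclidean cone of angle $\theta$ and radius $r$, whose area is $\tfrac{\theta}{2} r^2 \ge \pi r^2$. Comparing with the total area $1$ of $X$ and letting $r \to \rho$ gives $\pi \rho^2 \le 1$, hence $\rho \le 1/\sqrt{\pi}$.

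The second step is to convert the failure of injectivity at the critical radius into a saddle connection of length at most $2\rho$. At $r = \rho$, either a singularity lies at distance exactly $\rho$ from $p$ (immediately yielding a saddle connection of length $\rho$), or two distinct geodesic rays from $p$ of length $\rho$ meet at a common regular point $q$. In the latter case, the concatenation of the two rays is a (possibly broken) loop based at $p$ of length $2\rho$. The plan is then to apply a standard shortening argument: any corner at a regular point can be shortcut in the flat metric, and by an Arzel\`a--Ascoli compactness argument on the space of based loops of bounded length, the infimum is realized by a genuine geodesic loop---a single straight geodesic segment from $p$ to $p$---of length at most $2\rho$. This segment is itself a saddle connection from $p$ to $p$ unless its interior meets another singularity, in which case an initial subsegment is a saddle connection of even shorter length.

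Combining the two steps gives $\ell(X) \le 2\rho \le 2/\sqrt{\pi}$. The main subtlety is the shortening-plus-compactness step in the second paragraph, which ensures that the (generally broken) concatenation of two meeting rays can be replaced by a genuine straight geodesic of no greater length; the possibility that the two rays meet non-tangentially at $q$ is what forces one to invoke shortening rather than reading off a saddle connection directly from the concatenation. Once this is handled, the area estimate in the first step delivers the bound, and the argument uses only that the cone angle at $p$ is at least $2\pi$---nothing specific to the stratum.
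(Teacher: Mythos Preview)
Your approach mirrors the paper's almost exactly: both bound the injectivity radius at a zero by $1/\sqrt{\pi}$ via the area of an embedded cone disk, and both then extract a saddle connection of length at most $2\rho$ from the failure of injectivity at radius $\rho$.

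The one substantive gap is in your shortening step. You invoke an Arzel\`a--Ascoli/curve-shortening argument to produce a minimizing geodesic loop based at $p$, but you never rule out that the concatenation $m_1 \cdot m_2^{-1}$ is null-homotopic in $X$; if it were, the infimum of length in its based homotopy class would be zero, realized by the constant loop, and no saddle connection would result. The paper confronts exactly this point: when the two radii point in distinct directions, the holonomy of the loop (the integral of $\omega$ around it) is a nonzero complex number, and since $\omega$ is closed this is a homotopy invariant, so the loop is essential; the paper treats the equal-direction case separately. An alternative fix, closer in spirit to your sketch, is to note that the universal cover of $X$ is $\mathrm{CAT}(0)$ (every cone angle is at least $2\pi$), so two distinct geodesic segments from a lift of $p$ cannot share an endpoint; hence the loop is not contractible. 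Once you insert one of these justifications, your argument is complete and essentially identical to the paper's---the tightened loop is then a nonempty union of saddle connections of total length at most $2\rho$, and any one of them has length at most $2/\sqrt{\pi}$.
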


\begin{proof}
  Pick a singular point $q\in \Sigma$, and consider the set $B_R(q)$ of points of distance at most $R$ from $q$ in the flat metric.  For small $R$, this gives an embedded topological disk that does not contain any other singular point besides $q$.  As we increase $R$, the ball eventually either (i) hits a different singular point $p\in \Sigma$, or (ii) the boundary circle intersects itself.  Let $R_0$ be the smallest value for which one of these two events occurs.  Now if $B_R(q)$ is embedded, then $\operatorname{Area}(B_R(q)) > \pi R^2$ (the area grows like some integral multiple of the area of a ball in the Euclidean plane, because of the cone angle at the center).  On the other hand, if it is embedded, then $\operatorname{Area}(B_R(q))\le \operatorname{Area}(X) =1$.  Hence $1\ge \pi R_0^2$, and so $R_0 \le \frac{1}{\sqrt\pi}$.

  If (i) occurred, then the geodesic segment through $B_R(q)$ connecting $q$ to $p$ is a saddle connection of length $R_0$.  Now suppose (ii) occurs, and $z$ is the point at which the boundary circle intersects itself, which we can assume is not a singular point.  There are two geodesic segments $m_1,m_2$, coming from radii of the disk, that connect $q$ to $z$.  Now if $m_1$ and $m_2$ have the same angle on the surface, then the concatenation $\gamma$ of $m_1$ and $m_2$ is a saddle connection of length $2R_0$.  If $m_1,m_2$ have different angle, then their holonomies (the integral of the 1-form $\omega$ over the path) are not real multiples of each other.  The holonomy of the concatenation $\gamma$ is the sum of the holonomies of $m_1$ and $m_2$, and so is non-zero, which implies that $\gamma$ is not a null-homotopic loop, since homotopy preserves the holonomy.  The length of $\gamma$ is $2R_0$.  Now $\gamma$ can be tightened to have minimal length in its homotopy class - the resulting loop will be a concatenation of saddle connections.  So there will be some saddle connection in the concatenation with length at most $2R_0$.

  In all of the cases we get a saddle connection of length at most $2R_0 \le \frac{2}{\sqrt{\pi}}$.  
\end{proof}

The following result appears in \cite{mtw2016} (Proposition 4.1, item (1)).  

\begin{lemma}
  There exists a constant $C$ such that for any $X$ in any stratum $\mathcal{H}$ and any $R$ sufficiently large (depending on $X$), there is a saddle connection $s$ on $X$ such that angle $\theta$ between $s$ and the horizontal direction satisfies
  $$|\theta| \le \frac{C}{R |s|}.$$
  \label{lemma:near-horiz}
\end{lemma}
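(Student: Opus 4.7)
The plan is to deform the surface by the Teichm\"uller flow and apply the universal systole bound of Lemma \ref{lemma:systole} to the deformed surface, then pull the resulting short saddle connection back to $X$.

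More concretely, set $t = \log R$ and consider the surface $g_t X$. Since $g_t X$ is still a unit-area translation surface in the same stratum, Lemma \ref{lemma:systole} produces a saddle connection $s$ on $g_t X$ whose length is at most $2/\sqrt{\pi}$. The topological arc $s$ is also a saddle connection on $X$, but with different holonomy: if the holonomy vector of $s$ on $X$ is $(x,y)$, then on $g_t X$ it is $(e^{-t}x, e^t y) = (x/R, Ry)$. The length bound on $g_t X$ therefore reads
$$\left(\frac{x}{R}\right)^2 + (Ry)^2 \le \frac{4}{\pi}.$$
In particular $|y| \le 2/(R\sqrt{\pi})$. Since $\sin\theta = y/|s|$, we obtain $|\sin\theta| \le 2/(R\sqrt{\pi}\,|s|)$, and then, using $|\theta| \le (\pi/2)|\sin\theta|$ valid for $|\theta|\le \pi/2$, we conclude
$$|\theta| \le \frac{\sqrt{\pi}}{R\,|s|},$$
so $C = \sqrt{\pi}$ suffices.

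The only place where the ``$R$ sufficiently large (depending on $X$)'' hypothesis enters is in justifying that $|\theta| \le \pi/2$, i.e.\ that the saddle connection $s$ is not nearly vertical on $X$. For this, note that $|s| \ge \ell(X) > 0$ (any saddle connection on $X$ has length at least the systole of $X$), so $|\sin\theta| \le 2/(R\sqrt{\pi}\,\ell(X))$, which is less than $1$ once $R > 2/(\sqrt{\pi}\,\ell(X))$. Thus for $R$ past this threshold the angle $\theta$ is genuinely small and the linearization $|\theta| \le (\pi/2)|\sin\theta|$ is valid.

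No step looks to be a serious obstacle; the only thing to double-check is the convention for the ``angle between $s$ and the horizontal direction,'' which should be taken modulo $\pi$ so that it lies in $[-\pi/2,\pi/2]$. With that convention the argument above works verbatim. The proof uses only the linear action of $g_t$ on holonomy vectors together with the universal systole estimate, so it is genuinely stratum-independent and gives a constant $C$ depending on no data at all.
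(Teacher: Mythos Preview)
Your argument is correct and follows the same route as the paper's: deform by $g_t$, apply the universal systole bound (Lemma~\ref{lemma:systole}) to $g_tX$, and pull the resulting short saddle connection back to $X$. Your extraction of the angle bound via $|\sin\theta|=|y|/|s|$ is slightly slicker than the paper's, which bounds $|\tan\theta|$ and hence needs an extra step showing the horizontal component dominates; one side remark is that the paper's normalization $e^t=(\sqrt{\pi}/2)R$ (rather than your $e^t=R$) additionally yields $|s|\le R$, a fact not recorded in the lemma's statement but tacitly used downstream in the proof of Lemma~\ref{lemma:cesaro-lower}.
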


\begin{proof}
  The idea is to deform the surface using the diagonal matrix $g_t$, take a saddle connection on the surface $g_tX$ of uniformly bounded length, and the consider the corresponding saddle connection on $X$, which will be nearly horizontal.

  Let $s'$ be the shortest saddle connection on $g_tX$, and let $s$ be the saddle connection on $X$ that is the image of $s'$ under $g_t^{-1}$.  Let $s'_x,s'_y$  be the $x$ and $y$ components of the holonomy vector of $s'$.  The components of the holonomy vector of $s$ are $e^{t}s_x'$ and $e^{-t} s_y'$.  Since the set of saddle connection holonomies is invariant under $v\mapsto -v$, we can assume that $s_x'\ge 0$.  We will assume that in fact $s'_x>0$ (if $s_x'=0$, then $g_tX$, and hence also $X$, would necessarily have a vertical saddle connection; we can rotate very slightly to get rid of the horizontal saddle connection, and then running the argument with the slightly rotated surface will produce a saddle connection very close to horizontal on the slightly rotated surface, which will correspond to a saddle connection on $X$ that is also very close to horizontal).

  By the definition of $\ell(X)$, we have 
\begin{align}
  \label{eq:len_sys}
  \left(e^{-t}s_y'\right)^2 + \left( e^ts_x' \right)^2 = |s|^2 \ge \ell(X)^2.  
\end{align}

Now we will suppose that $t$ is large so that $e^t \ge \frac{\sqrt{2}}{\ell(X)} \cdot \frac{2}{\sqrt{\pi}}$.  By Lemma \ref{lemma:systole}, $s'$ has length at most $\frac{2}{\sqrt\pi}$, and hence
\begin{align*}
  e^{-t}s_y' \le  e^{-t}|s'| \le e^{-t} \frac{2}{\sqrt\pi} \le \frac{\ell(X)}{\sqrt{2}} \cdot \frac{\sqrt{\pi}}{2} \cdot \frac{2}{\sqrt\pi} = \ell(X)/\sqrt{2},
\end{align*}
hence
\begin{align*}
  \left(e^{-t}s_y' \right)^2 \le \frac{\ell(X)^2}{2}. 
\end{align*}

Together with \eqref{eq:len_sys} this implies that
\begin{align*}
  e^ts_x' \ge e^{-t}s_y' 
\end{align*}
and so we must have
\begin{align*}
  e^ts_x' \ge |s|/\sqrt{2}. 
\end{align*}

Now using this, and the fact that $s_y'\le |s'| \le 2/\sqrt{\pi}$, we can estimate the angle $\theta$ by

\begin{align}
  \label{eq:theta} 
   |\theta| \le \tan |\theta| =\frac{e^{-t}s_y'}{e^ts_x'} \le \frac{e^{-t}\cdot 2/\sqrt{\pi}}{|s|/\sqrt{2}}. 
\end{align}

Now for $R \ge \frac{\sqrt{2} (2/\sqrt{\pi} )^2}{\ell(X)}$, choose $t$ such that $e^t = \frac{R}{2/\sqrt{\pi}} \ge  \frac{\sqrt{2} (2/\sqrt{\pi})}{\ell(X)}$.  For such $t$, the condition on $e^t$ assumed above is satisfied.   Continuing from \eqref{eq:theta}, we get 

$$ |\theta| \le \frac{e^{-t}\cdot 2/\sqrt{\pi}}{|s|/\sqrt{2}} = \frac{2/\sqrt{\pi}}{R} \cdot \frac{ 2/\sqrt{\pi}}{|s|/\sqrt{2}} = \frac{2\sqrt{2}/\pi}{R|s|},$$

which is the desired result with $C=2\sqrt{2}/\pi$.  
  
\end{proof}

\begin{lemma}
  There exists an absolute constant $c>0$, such that for any $X$ in any stratum $\mathcal{H}$ and any interval $I \subset S^1$, there is some $R_0(X,I)$, such that for $R > R_0(X,I)$,
  $$\sum_{|s|\le R,\ \operatorname{angle}(s)\in I}\frac{1}{|s|} \ge c|I|R,$$
  where the sum is taken over saddle connections $s$ on $X$.  
  \label{lemma:cesaro-lower}
\end{lemma}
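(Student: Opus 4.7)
The plan is to harness Lemma~\ref{lemma:near-horiz} in rotated form: for each direction $\phi\in I$, apply it to the surface $r_{-\phi}X$ (which makes direction $\phi$ on $X$ horizontal) to obtain a saddle connection on $X$ whose angle is very close to $\phi$. The arcs about the angles of the resulting saddle connections will cover $I$, and subadditivity of Lebesgue measure, inverted, will give the desired Ces\`aro bound.

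Concretely, I would fix $R$ large and, for each $\phi\in I$, apply Lemma~\ref{lemma:near-horiz} to $r_{-\phi}X$ (which has the same shortest saddle connection length $\ell(X)$ as $X$). This produces a saddle connection $s_\phi$ on $X$ whose angle $\theta_{s_\phi}$ satisfies $|\theta_{s_\phi}-\phi|\le C/(R|s_\phi|)$, with $C$ the absolute constant from that lemma. A matching length upper bound $|s_\phi|\le \sqrt{2}\,R$ drops out of the proof of Lemma~\ref{lemma:near-horiz}: with the choice $e^t=R\sqrt{\pi}/2$ made there, one has $e^{t}s'_x\le e^{t}|s'|\le (2/\sqrt{\pi})e^{t}=R$, and combining with $e^{t}s'_x\ge |s_\phi|/\sqrt{2}$ gives $|s_\phi|\le\sqrt{2}\,R$. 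Consequently each $\phi\in I$ lies in the open arc of radius $C/(R|s_\phi|)$ about $\theta_{s_\phi}$, and the saddle connections involved all have length at most $\sqrt{2}\,R$ and angle in the $C/(R\ell(X))$-neighbourhood $I_+$ of $I$.

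Next I would convert this covering into the sum. By subadditivity of Lebesgue measure on $S^1$,
\[
|I|\;\le\;\sum_{\substack{\theta_s\in I_+\\ |s|\le \sqrt{2}\,R}}\frac{2C}{R|s|},
\]
which rearranges to a Ces\`aro lower bound with the sum restricted to $\theta_s\in I_+$ and $|s|\le\sqrt{2}R$. To replace $I_+$ by $I$, I would rerun the argument with $I$ shrunk by $C/(R\ell(X))$ on each side to a sub-interval $I_-\subset I$ whose enlargement $(I_-)_+$ is contained in $I$; for $R\ge R_0(X,I)$ sufficiently large we have $|I_-|\ge |I|/2$. Rescaling $R\mapsto R/\sqrt{2}$ finally converts the length cutoff from $\sqrt{2}R$ back to $R$, yielding the sum bound with an absolute constant of the form $c = 1/(8\sqrt{2}\,C)$.

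The delicate point will be extracting and bookkeeping the length bound $|s_\phi|\le\sqrt{2}R$, which is implicit in the proof of Lemma~\ref{lemma:near-horiz} but not in its statement, and then checking that every constant in sight is genuinely absolute. Since $C$ is absolute and the universal systole bound of Lemma~\ref{lemma:systole} does not involve $\mathcal{H}$, the final $c$ depends neither on the stratum nor on the surface; the only role of $X$ and $I$ is to determine the threshold $R_0(X,I)$, which controls both the applicability of Lemma~\ref{lemma:near-horiz} and the inequality $|I_-|\ge|I|/2$.
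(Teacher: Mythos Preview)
Your proposal is correct and follows essentially the same route as the paper: apply Lemma~\ref{lemma:near-horiz} to rotated copies of $X$, one for each direction in (a sub-interval of) $I$, and then turn the resulting covering of that sub-interval by arcs of radius $C/(R|s|)$ into the Ces\`aro sum via subadditivity. The paper handles the edge effects by working with the central quarter $I/4$ of $I$ rather than your shrunken interval $I_-$, and it asserts $|s|\le R$ directly (absorbing the $\sqrt{2}$ you track explicitly into the constant), but these are cosmetic differences; your bookkeeping of the length bound extracted from the proof of Lemma~\ref{lemma:near-horiz} is in fact slightly more careful than the paper's.
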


\begin{proof}
  Let $I/4$ be the interval with the same center as $I$, and length $|I|/4$.  For $R$ sufficiently large, and for each angle $\alpha\in I/4$, we apply Lemma \ref{lemma:near-horiz} to the rotated surface $r_{\alpha}X$, which gives a saddle connection $s$ with $|s|\le R$ and
  $$|\operatorname{angle}(s)-\alpha| < \frac{C}{R|s|}.$$

  Now for $R$ large enough (depending on $I$),
  $$|\operatorname{angle}(s)-\alpha| < \frac{C}{R|s|} \le |I|/4,$$
  
  and so $\operatorname{angle}(s)$ must be in $I$.

  These facts imply that

$$\bigcup_{|s|\le R,\ \operatorname{angle}(s)\in I}\left\{\phi: |\operatorname{angle}(s)-\phi| < \frac{C}{R|s|}\right\} \supset I/4. $$
 Comparing the lengths of the two sets, it follows that

$$\sum_{|s|\le R,\ \operatorname{angle}(s)\in I} \frac{2C}{R|s|} \ge |I|/4,$$
hence

$$\sum_{|s|\le R,\ \operatorname{angle}(s)\in I} \frac{1}{|s|} \ge \frac{1}{8C}|I| R,$$
which gives the desired result with constant $c=\frac{1}{8C} = \frac{\pi}{16\sqrt{2}}$.  
\end{proof} 

We will combine Lemma \ref{lemma:cesaro-lower} with the quadratic upper bound via the following general lemma about sequences adapted from \cite{masur1988} (Section \S2).  
\begin{lemma}
  Given a sequence $a_1 \le a_2 \le \cdots $ of positive real numbers, let $N_R=|\{i : a_i \le R\}|$, and suppose that there exist positive constants $c_0, \bar C_0,$ such that for all $R$ sufficiently large

  \begin{align}
    \label{eq:cesaro-bound} 
      \sum_{k: a_k \le R} \frac{1}{a_k} \ge c_0 R,
  \end{align}
and
  \begin{align}
    \label{eq:upper-bound}
    N_R \le \bar C_0 R^2.
  \end{align}
  Then for $R$ sufficiently large,
  \begin{align}
    \label{eq:lower-bound}
    N_R\ge c_3R^2,
  \end{align}
  where $c_3 = \frac{c_0^2}{72 \bar C_0}$.
  
\label{lemma:cesaro-quad}
\end{lemma}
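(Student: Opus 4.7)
I would prove Lemma \ref{lemma:cesaro-quad} by using the quadratic upper bound to show that the "small-scale" part of the sum $\sum 1/a_k$ cannot be too large, so that the Ces\`aro-type lower bound forces many $a_k$ to lie near $R$, thereby yielding a lower bound on $N_R$. Concretely, for a parameter $\alpha \in (0,1)$ to be chosen later, I would split
\[
\sum_{a_k \le R} \frac{1}{a_k} \;=\; \sum_{a_k \le \alpha R} \frac{1}{a_k} \;+\; \sum_{\alpha R < a_k \le R} \frac{1}{a_k},
\]
and bound the two pieces separately.

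For the "large" piece, since every term has $a_k > \alpha R$, I would simply bound
\[
\sum_{\alpha R < a_k \le R} \frac{1}{a_k} \;\le\; \frac{N_R}{\alpha R}.
\]
For the "small" piece, I would use a dyadic decomposition of $(0,\alpha R]$ into annuli $A_j = (\alpha R/2^{j+1},\, \alpha R/2^j]$ for $j=0,1,2,\dots$. On $A_j$ one has $1/a_k < 2^{j+1}/(\alpha R)$, while $\#\{k: a_k \in A_j\} \le N_{\alpha R/2^j} \le \bar C_0 \alpha^2 R^2/4^j$ by \eqref{eq:upper-bound}. Thus the contribution of $A_j$ is at most $2\bar C_0 \alpha R / 2^j$, and summing the geometric series gives
\[
\sum_{a_k \le \alpha R} \frac{1}{a_k} \;\le\; 4\bar C_0 \alpha R.
\]

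Combining these two estimates with the hypothesis \eqref{eq:cesaro-bound},
\[
c_0 R \;\le\; 4\bar C_0 \alpha R + \frac{N_R}{\alpha R},
\]
which rearranges to $N_R \ge \alpha(c_0 - 4\bar C_0 \alpha) R^2$. Optimizing in $\alpha$ (e.g.\ taking $\alpha = c_0/(8\bar C_0)$) yields a lower bound of the form $N_R \ge c_3 R^2$ with $c_3$ a positive constant depending only on $c_0$ and $\bar C_0$, of order $c_0^2/\bar C_0$ as claimed.

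The only genuine subtlety is that \eqref{eq:upper-bound} is assumed only for $R$ sufficiently large, whereas the dyadic decomposition descends to scales $\alpha R/2^j$ that eventually fall below any fixed threshold $R_0$. I would handle this by truncating the dyadic sum at the largest $j$ with $\alpha R/2^j \ge R_0$, and bounding the remaining terms (those with $a_k \le R_0$) by a constant $C(R_0) = \sum_{a_k \le R_0} 1/a_k$ depending on the sequence but not on $R$. Since $R \to \infty$, this additive constant is absorbed into the statement "for $R$ sufficiently large," and the main inequality is unaffected in the limit — this is the only place where the "sufficiently large" hypothesis in the conclusion is used. Matching the exact constant $c_3 = c_0^2/(72\bar C_0)$ stated in the lemma is not essential: my optimization gives a slightly better constant, and any positive multiple of $c_0^2/\bar C_0$ suffices for the application to Theorem~\ref{thm:lower-bound}.
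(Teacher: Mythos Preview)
Your proof is correct and follows essentially the same approach as the paper: split the sum $\sum 1/a_k$ at a scale proportional to $R$, control the small-scale part via a dyadic decomposition together with the quadratic upper bound (absorbing the finitely many terms below the threshold into an additive constant), and bound the large-scale part trivially by $N_R/(\alpha R)$. The only cosmetic differences are that the paper argues by contradiction and splits at a dyadic scale $R/2^\ell$ rather than a continuous parameter $\alpha R$; your direct argument is slightly cleaner and yields the better constant $c_0^2/(16\bar C_0)$.
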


\begin{proof}
  The idea is to assume the lower bound fails along a subsequence, use the quadratic upper bound \eqref{eq:upper-bound} to bound from above the first part of the sum $\sum 1/a_k$, for the small $a_k$, and then get a contradiction to \eqref{eq:cesaro-bound} by considering a subsequence where the lower bound \eqref{eq:lower-bound} fails, and using this to bound above the remaining later part of the sum $\sum 1/a_k$.
  
Assume that $R_i\to \infty$ is a sequence such that
\begin{align}
  \label{eq:no-lower}
  N_{R_i} < \frac{c_0^2}{72 \bar C_0 } R_i^2. 
\end{align}
We wish to obtain a contradiction to \eqref{eq:cesaro-bound}; we first bound the first part of the sum in \eqref{eq:cesaro-bound} using the quadratic upper bound \eqref{eq:upper-bound}.  For any positive integer $\ell$, we have
\begin{align}
  \sum_{a_k \le R/2^{\ell}} \frac{1}{a_k} &\le \sum_{j\ge \ell} \frac{N_{R/2^j} - N_{R/2^{j+1}}}{R/2^{j+1}} \\
                                     &\le F + \sum_{j\ge \ell} \frac{\bar C_0(R/2^j)^2}{R/2^{j+1}} \text{ \ \ (by \eqref{eq:upper-bound})}\\
                                     & = F + \sum_{j\ge \ell} \frac{\bar C_0 R}{2^{j-1}} \\
                                     & = F + \frac{\bar C_0 R}{2^{\ell-2}},   \label{eq:first-part}
\end{align}
where $F$ is some constant (independent of $\ell$) that absorbs the smaller $a_k$ that are below the threshold value of $R$ for which the quadratic upper bound \eqref{eq:upper-bound} is guaranteed hold.

Now choose $\ell$ such that
\begin{align}
  \label{eq:choice-ell}
  \frac{\bar C_0}{2^{\ell-2}} < \frac{c_0}{3} \le \frac{\bar C_0}{2^{\ell-3}}.  
\end{align}

Now using the bound from \eqref{eq:first-part} for the first part of the sum in \eqref{eq:cesaro-bound}, and the assumption \eqref{eq:no-lower} to bound the second part gives
\begin{align}
  \label{eq:1}
  \sum_{a_k \le R_i} \frac{1}{a_k} &= \sum_{a_k \le R_i/2^{\ell}}\frac{1}{a_k} + \sum_{R_i/2^{\ell} < a_k \le R_i} \frac{1}{a_k}\\
                                   &\le F+ \frac{\bar C_0 R_i}{2^{{\ell}-2}} + \frac{N_{R_i}}{R_i/2^{\ell}} \text{ \ \ (by \eqref{eq:first-part}, and definition of }N_R)\\
                                   & \le F + \frac{c_0}{3} R_i +\frac{1}{R_i/2^{\ell}} \frac{c_0^2}{72 \bar C_0 } R_i^2 \text{ \ \ (by \eqref{eq:choice-ell} and \eqref{eq:no-lower})} \\
                                   &= F + \frac{c_0}{3} R_i + \frac{c_0^2}{9\bar C_0/2^{{\ell}-3}} R_i \\
                                   &\le F + \frac{c_0}{3} R_i + \frac{c_0^2}{9(c_0/3)} R_i \text{ \ \ (by \eqref{eq:choice-ell})} \\
                                   &= F + \frac{2c_0}{3} R_i,
\end{align}
which contradicts \eqref{eq:cesaro-bound} for large $i$.  

\end{proof}

We are now ready to prove the desired quadratic lower bound.  

\begin{proof}[Proof of Theorem \ref{thm:lower-bound}] 

  We apply Lemma \ref{lemma:cesaro-quad} with $a_k$ the length of the $k^{\text{th}}$ shortest saddle connection $s$ with $\operatorname{angle}(s)\in I$, so that $N_R=N(X,R,I)$. The first hypothesis in Lemma \ref{lemma:cesaro-quad} is  satisfied if we take $c_0=c|I|$, where $c$ is the constant from Lemma \ref{lemma:cesaro-lower}. The second hypothesis is satisfied with $\bar C_0=c_2|I|$, where $c_2$ is the constant from Theorem \ref{thm:upper-bound}.  Then Lemma \ref{lemma:cesaro-quad} gives that
  $$N(X,R,I)  \ge \frac{(c|I|)^2}{72 c_2 |I|} R^2 =\frac{c^2}{72c_2} |I|R^2,$$
for all $R$ sufficiently large (depending on $X$ and $I$).  Neither $c$ nor $c_2$ depend on the surface $X$ (though $c_2$ depends on the stratum $\mathcal{H}$), so we get the desired result.  

\end{proof}

\section{Proof of Theorem \ref{thm:equidist}} 
\label{sec:proof-cor}

In the proof we will work with $X'=X-\Sigma$ and the unit tangent bundle $T_1X'$ (notice that on the tangent spaces over the points of $\Sigma$, the inner product induced by the $1$-form $\omega$ is identically zero, so we cannot sensibly speak of $T_1X$).  Each saddle connection $s$ gives a probability measure $\eta_s$ on $T_1X'$, which is supported on the set of unit tangent vectors to $s$ (excluding the endpoints), and characterized by the property that if we take a measurable subset of these tangent vectors to $s$ whose distance from $\Sigma$ is bounded away from zero, then if we flow in the direction of $s$ for a small amount of time, the $\eta_s$ measure of the image set is the same.  We define probability measures $\eta_R$ by averaging the $\eta_s$ over all $s$ of length at most $R$.  

\paragraph{Idea of proof.} Here is the idea, ignoring the issue that the norm on the tangent bundle to $X$ isn't properly defined at the singular points $\Sigma$, and that there is a complicated set of tangent vectors that flow into $\Sigma$. We argue by contradiction, and look at the corresponding measures $\eta_R$ on the unit tangent bundle - we can find a subsequence that converges to some measure $\eta$ which does not project to area measure on $X$.  Since the converging measures come from longer and longer saddle connections, the limit $\eta$ will be invariant under the geodesic flow.  Now we disintegrate $\eta$ along the angle map that takes a unit tangent vector to its angle in $S^1$; for each $\theta\in S^1$, this gives a measure on the fiber over $\theta$ (the fiber is identified with $X$) that is invariant under the $\theta$ directional flow.  The push-forward of the original measure along the angle map is a measure $\nu$ on $S^1$ that is a limit of angle measures $\nu_{R_n}$, so we can apply Theorem \ref{thm:angle-lebesgue} to show that it is absolutely continuous with respect to Lebesgue measure.  This means that the work of Kerckhoff-Masur-Smillie (\cite{kms1986}) applies: for a.e. $\theta$ (with respect to Lebesgue, and hence to $\nu$), the flow in that direction is uniquely ergodic, and hence the measure on the fiber over $\theta$ must be area measure.  The fact that the fiber measures come from disintegration, and that $\nu$ is the push-forward of $\eta$ along the angle map, means that the push-forward of $\eta$ along the projection to the surface $X$ is an average of the fiber measures with respect to $\nu$.  Since $\nu$-a.e. fiber measure is area measure, $\eta$ is also area measure.  

\subsection{Technical lemmas on saddle connections}
\label{sec:sc}

The goal of this subsection is to establish Lemma \ref{lemma:no-escape}, which says that saddle connections do not concentrate around singular points and will be needed in the proof of Theorem \ref{thm:equidist} below.  We establish Lemma \ref{lemma:no-escape} as a consequence of Lemma \ref{lemma:poly_inter}, which in turn follows from Lemma \ref{lemma:seg_inter}.

Lemma \ref{lemma:seg_inter} says that the number of times that a saddle connection can hit a short segment is small relative to the length of the saddle connection.   Its proof has the most substance, and we give it first.

\begin{figure}[ht]
\labellist
\small\hair 2pt
\pinlabel $a$ at 223 303
\pinlabel $s$ at 512 262
\pinlabel $x_1$ at 235 272
\pinlabel $x_2$ at 235 96
\pinlabel $x_3$ at 235 246
\pinlabel $x_4$ at 235 34
\pinlabel $x_5$ at 235 123
\pinlabel $x_6$ at 235 208
\pinlabel $x_7$ at 235 162
\pinlabel $x_8$ at 235 65
\pinlabel $s'$ at 382 274
\pinlabel $a'$ at 214 251
\endlabellist
\centering
  \includegraphics[scale=0.5]{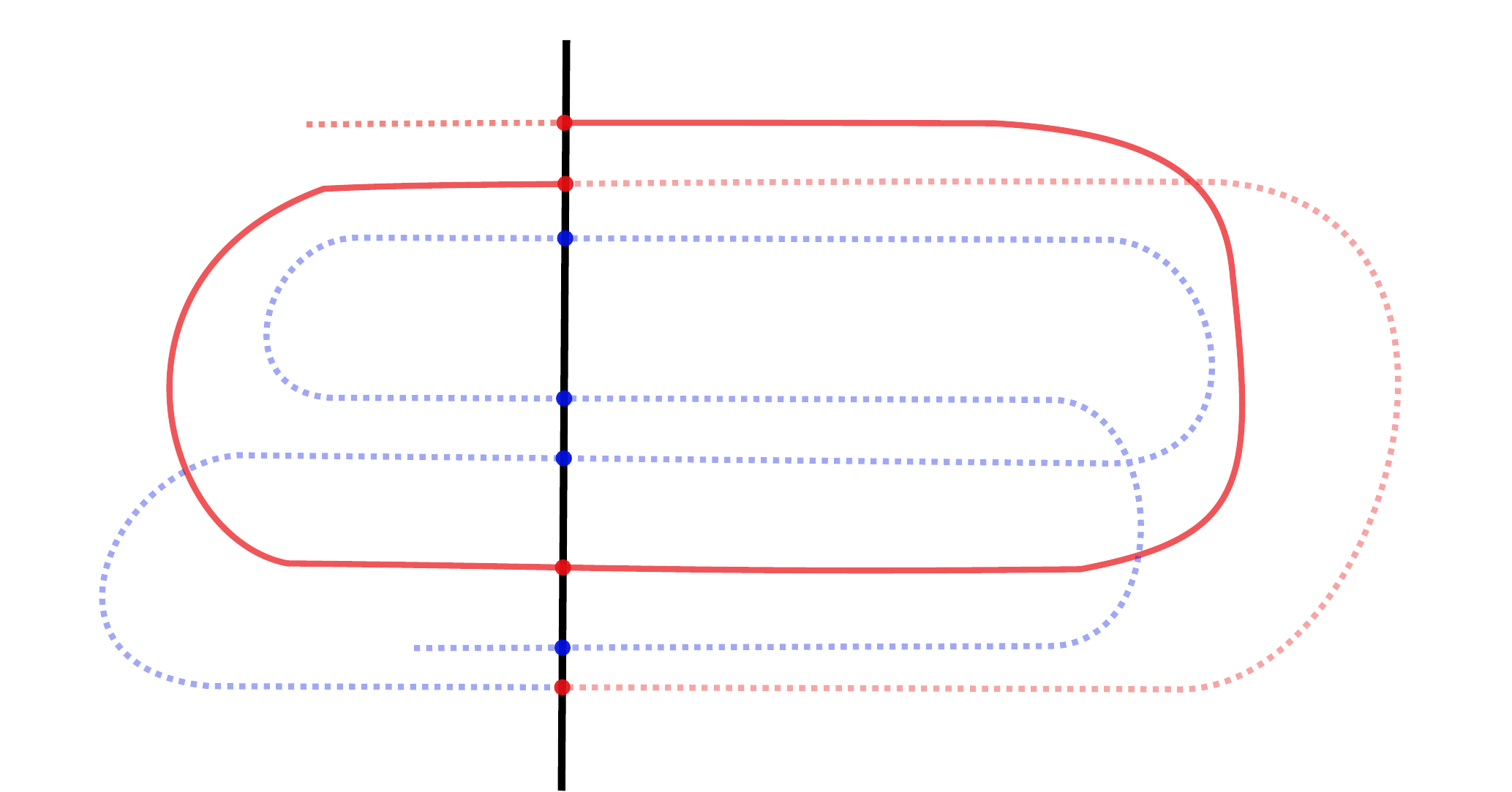}
  \caption{Proof of Lemma \ref{lemma:seg_inter}.  The red points are group $A_1$, while the blue are $A_2$.}
  \label{fig:seg_sc}
\end{figure}

\begin{lemma}
  Let $s$ be a saddle connection on $X$, and let $a$ be a straight segment  (i.e. geodesic with respect to the flat metric) avoiding the singular set $\Sigma$, and which is not parallel to $s$. Then 
$$\frac{\#(s\cap a)}{|s|} \le \left(\frac{|a| + 1}{\ell(X)}\right)^2.$$
\label{lemma:seg_inter}
\end{lemma}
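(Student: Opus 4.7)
The strategy is to partition $a$ into short sub-segments and bound the number of crossings of $s$ with each sub-segment.

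\textbf{Partition.} Cover $a$ by $K \le \lceil 2|a|/\ell(X) \rceil$ sub-segments $I_1,\dots,I_K$, each of length at most $\ell(X)/2$.

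\textbf{Bigons are nontrivial, via CAT(0).} Fix $k$ and list the intersections of $s$ with $I_k$ in the order they appear along $s$: $q_1,\dots,q_{m_k}$. For each consecutive pair $q_i,q_{i+1}$, let $\sigma_i$ denote the sub-arc of $s$ and $\alpha_i$ the sub-arc of $I_k$ (hence of $a$) joining them. Consider the closed piecewise-geodesic loop $\gamma_i := \sigma_i\cdot\alpha_i^{-1}$. I claim $\gamma_i$ is not null-homotopic in $X$. Indeed, the universal cover $\tilde X$ carries a CAT(0) metric because all cone angles of a translation surface have the form $2\pi(n+1)$ with $n\ge 1$, hence are at least $4\pi\ge 2\pi$. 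If $\gamma_i$ were null-homotopic, then its lift to $\tilde X$ would be a closed loop, so the geodesic arcs $\tilde\sigma_i$ and $\tilde\alpha_i$ would connect the same pair of lifted endpoints. Uniqueness of geodesics in CAT(0) spaces would force $\tilde\sigma_i=\tilde\alpha_i$, contradicting the non-parallel hypothesis on $s$ and $a$.

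\textbf{Length lower bound.} Since $\gamma_i$ is not null-homotopic, tightening in its free homotopy class produces a geodesic representative of length at least $\ell(X)$: this representative is either a concatenation of saddle connections (each of length $\ge\ell(X)$) or a closed geodesic inside a cylinder, whose circumference is bounded below by the length of a parallel boundary saddle connection and hence by $\ell(X)$. Tightening decreases length, so $|\sigma_i|+|\alpha_i|\ge\ell(X)$; combined with $|\alpha_i|\le\ell(X)/2$, this yields $|\sigma_i|\ge\ell(X)/2$.

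\textbf{Counting.} The $\sigma_i$, $i=1,\dots,m_k-1$, are pairwise disjoint sub-arcs of $s$ (lying between consecutive-in-$s$-order intersections with $I_k$), so $\sum_i|\sigma_i|\le|s|$, giving $m_k\le 1+2|s|/\ell(X)$. Summing over $k$ and using $K\le 2(|a|+1)/\ell(X)$ together with $|s|\ge\ell(X)$ (since $s$ is itself a saddle connection) produces a bound of the form
$$\frac{\#(s\cap a)}{|s|}\le \text{const}\cdot\frac{|a|+1}{\ell(X)^2},$$
which implies the stated bound after absorbing the constant into the square $((|a|+1)/\ell(X))^2$.

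\textbf{Main obstacle.} The crux is verifying non-triviality of the bigon $\gamma_i$. The CAT(0) formulation cleanly bypasses the potential self-intersections of $s$ (they cause no problem in $\tilde X$, where all geodesics are globally embedded), and reduces everything to the uniqueness of CAT(0) geodesics between two points. The only nontrivial input is the standard fact that singular flat surfaces with all cone angles $\ge 2\pi$ have CAT(0) universal cover; the rest of the argument is elementary partitioning and summation. The grouping of intersections into the two crossing-direction classes $A_1,A_2$ visible in Figure~\ref{fig:seg_sc} may be used in a variant of the argument to save a factor of two or to handle the orientation of the bigon, but is not strictly needed for the bound above.
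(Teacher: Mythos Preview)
Your argument is essentially correct and takes a genuinely different route from the paper, but the very last step is a gap.

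\textbf{Comparison.} The paper argues by contradiction via a \emph{double} pigeonhole: assuming $\#(s\cap a)>K|s|$ with $K=((|a|+1)/\ell(X))^2$, it groups the intersection points into blocks of size $\sqrt{K}$ \emph{along $s$}, finds one block contained in a sub-arc $s'$ of $s$ with $|s'|<1/\sqrt{K}$, and then pigeonholes those same points \emph{along $a$} to find two at distance $<|a|/\sqrt{K}$ on $a$. The resulting bigon has length $<(1+|a|)/\sqrt{K}=\ell(X)$, a contradiction. Nontriviality of the bigon is argued via holonomy (non-parallel segments have independent periods) rather than CAT(0); your CAT(0) justification is equally valid and perhaps cleaner. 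Your approach---partition $a$, bound crossings with each short piece via the bigon length bound, and sum---is direct rather than by contradiction, and it actually yields a bound \emph{linear} in $|a|$, namely $\#(s\cap a)/|s|\le C(|a|+1)/\ell(X)^2$ with $C$ around $6$. For long $a$ this is stronger than the stated quadratic bound.

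\textbf{The gap.} Your closing claim, that this linear bound ``implies the stated bound after absorbing the constant into the square,'' is false as written: $C(|a|+1)/\ell(X)^2\le(|a|+1)^2/\ell(X)^2$ requires $|a|+1\ge C$, which fails for short $a$ (and the only application in the paper, Lemma~\ref{lemma:poly_inter}, uses $|a|=\epsilon$ small). So you have proved a correct and useful inequality, but not the one in the statement. The paper's double pigeonhole---splitting $\sqrt{K}$ of the savings to the $s$-side and $\sqrt{K}$ to the $a$-side---is exactly the balancing that produces the constant $1$ in front of the square. If you want to rescue your approach for the exact statement, you would need to sharpen the constants (which does not seem to work uniformly), or simply state and prove the linear bound you actually obtain; the downstream Lemma~\ref{lemma:poly_inter} would then go through with a harmless change of constant. (A side remark: the groups $A_1,A_2$ in Figure~\ref{fig:seg_sc} are the paper's blocks of consecutive points along $s$, not crossing-direction classes.)
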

\begin{proof}
  The idea of the proof is that if $s$ intersects $a$ too many times, between some pair of intersection points we could find a short segment of $s$, which together with the segment of $a$ between the two points, would give a short loop that is not homotopically trivial.  Since we need both the part of the loop coming from $a$ and the part from $s$ to be short, the proof is slightly complicated.  

Suppose, for the sake of contradiction, that for some saddle connection $s$, the inequality above fails.  Let $K=\left(\frac{|a|+1}{\ell(X)}\right)^2$.  So there are more than $K\cdot |s|$ points in $s\cap a$; label them $x_1,x_2,\ldots$, in the order in which $s$ (choose some orientation) hits them.  Now divide these into groups $A_i$ of $f(K)$ consecutive points, where $f$ is a function of $K$ to be chosen later.  The total number of groups is (within $1$ of) $\#(s\cap a)/f(K)$.  We can divide $s$ into $\# (s\cap a)/f(K)$ segments $s_i$, where $s_i$ contains all the intersection points in $A_i$.  Now $\sum_i |s_i|=|s|$, so there is some $k$ such that 
$$|s_k| \le \frac{|s|}{\#(s\cap a) /f(K)} < \frac{f(K)}{K}.$$
  The points in $A_k$ all lie on $a$, so by pigeonhole there exist two points $x_i,x_j$ such that the sub-segment $a'$ of $a$ joining $x_i$ to $x_j$ satisfies $|a'| \le |a| /\# A_i = |a|/f(K)$.  Let $s'$ be the sub-segment of $s_k\subset s$ connecting $x_i,x_j$.  

Note that $s'\cup a'$ is a closed loop.  We claim that is not null-homotopic.  In fact, if it were, then it would represent the zero element in the relative homology group $H_1(X,\Sigma; \mathbb{Z})$, which would imply that the period of the $1$-form $\omega$ along $s'$ would equal the negative of the period along $a'$, but this cannot happen because $s',a'$ are straight segments in non-parallel directions.  

On the other hand 
$$|s'\cup a'| = |s'| + |a'| \le |s_k| + \frac{|a|}{f(K)} < \frac{f(K)}{K} + \frac{|a|}{f(K)}.$$

Now we choose $f(K) = \sqrt{K}$, which gives

$$ |s'\cup a| < \frac{1}{\sqrt{K}} + \frac{|a|}{\sqrt{K}} = \frac{1}{\sqrt{K}} (1+|a|)  = \ell(X).$$

This gives a contradiction, since $s'\cup a$ can be homotoped into either (i) a straight periodic orbit avoiding singular points, or (ii) a union of saddle connections.  In both cases, the new path is no longer than the original one.  In case (i) there is a saddle connection in the boundary of the corresponding cylinder of length less than $\ell(X)$, contradiction.  In case (ii) one (in fact any) of the saddle connections in the union has length less than $\ell(X)$, again a contradiction.  

\end{proof}

The next lemma says that the proportion of time a saddle connection spends in certain small polygons is small.  

\begin{lemma}
  Let $S_{\epsilon}$ be the interior of a polygon on $X$, all of whose sides are length $\epsilon$, and all of whose angles are right angles.  Let $s$ be a saddle connection on $X$ that is not parallel to any of the sides of the polygon.  Then, for $\epsilon$ small,
$$\frac{|s\cap S_{\epsilon}|}{|s|} \le \frac{4k\sqrt{2} \cdot \epsilon(\epsilon+1)^2}{\ell(X)^2},$$
where $2\pi k$ is the maximum cone angle at a singular point of $X$. 
\label{lemma:poly_inter}
\end{lemma}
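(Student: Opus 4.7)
The plan is to bound $|s \cap S_\epsilon|$ as a product: the number of connected components of $s \cap S_\epsilon$ times the maximum length of any one component. The first factor will be controlled by Lemma~\ref{lemma:seg_inter} applied to each side of $\partial S_\epsilon$, and the second by a direct geometric bound on the diameter of $S_\epsilon$.

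First, Gauss--Bonnet applied to $S_\epsilon$, regarded as a flat topological disk with all interior angles $\pi/2$, forces the number of sides to be exactly $4k'$, where $2\pi k'$ is the cone angle at the singular point enclosed by $S_\epsilon$ (set $k' = 1$ if no singularity is enclosed); in either case the number of sides is at most $4k$. Applying Lemma~\ref{lemma:seg_inter} to each side (of length $\epsilon$, and by hypothesis not parallel to $s$) and summing yields
$$\#(s \cap \partial S_\epsilon) \,\le\, 4k \left(\frac{\epsilon+1}{\ell(X)}\right)^2 |s|.$$
Each connected component of $s \cap S_\epsilon$ is a straight sub-segment of $s$ with both endpoints on $\partial S_\epsilon$ (apart from at most two components that contain an endpoint of $s$), so the number of such components is at most $\#(s \cap \partial S_\epsilon)$.

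Second, I claim the diameter of $S_\epsilon$ is at most $\sqrt{2}\,\epsilon$. When no singularity is enclosed, $S_\epsilon$ is isometric to a Euclidean square of side $\epsilon$ and the claim is immediate. When $S_\epsilon$ encloses a cone point of angle $2\pi k'$, I decompose $S_\epsilon$ into $4k'$ congruent isosceles triangles joining the cone point to each boundary edge; by symmetry each such triangle has apex angle $2\pi k'/(4k') = \pi/2$ at the cone point and base $\epsilon$ on the boundary, so the law of cosines gives $\epsilon^2 = 2r^2$, where $r$ is the distance from the cone point to any boundary vertex. Hence $r = \epsilon/\sqrt{2}$, every point of $S_\epsilon$ lies within $r$ of the cone point, and any two points of $S_\epsilon$ are joined by the path through the cone point of length at most $2r = \sqrt{2}\,\epsilon$.

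Combining the two estimates gives
$$|s \cap S_\epsilon| \,\le\, \sqrt{2}\,\epsilon \cdot 4k \left(\frac{\epsilon+1}{\ell(X)}\right)^2 |s| \,=\, \frac{4k\sqrt{2}\cdot\epsilon(\epsilon+1)^2}{\ell(X)^2}\, |s|,$$
which is the desired bound after dividing by $|s|$. The main obstacle is justifying the decomposition into $4k'$ congruent isosceles triangles in the second step: a priori the side-length and angle constraints could admit irregular polygons in the cone for which the circumradius computation does not literally apply. This is resolved in the applications by taking $S_\epsilon$ to be the regular $4k'$-gon centered at the cone point (which is the natural choice in the proof of Lemma~\ref{lemma:no-escape} and of Theorem~\ref{thm:equidist}); otherwise one would need a separate argument, e.g.\ by unfolding $S_\epsilon$ via the branched cover $z \mapsto z^{1/k'}$ and bounding the diameter of the resulting planar region.
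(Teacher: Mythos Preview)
Your proof is correct and follows essentially the same approach as the paper: bound the number of components of $s\cap S_\epsilon$ by $\#(s\cap\partial S_\epsilon)$, apply Lemma~\ref{lemma:seg_inter} to each of the at most $4k$ sides, and multiply by the $\sqrt{2}\,\epsilon$ bound on the length of each component. You are in fact more careful than the paper on two points it leaves implicit---the Gauss--Bonnet count of sides and the justification of the $\sqrt{2}\,\epsilon$ component-length bound---and your caveat about regularity is well taken, since the paper tacitly relies on choosing $S_\epsilon$ centered at the cone point (as it indeed does in Lemma~\ref{lemma:no-escape}).
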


\begin{proof}
  Note that each component of the intersection $s\cap S_{\epsilon}$ has length at most $\epsilon\sqrt{2}$, and the number of components of $s\cap S_{\epsilon}$ is bounded by the sum of the number of intersection points of $s$ with the sides of the polygon $a_1,\ldots,a_n$.  Since the polygon is small, it contains at most one singular point, and the cone angle determines the number of sides $n$, so $n\le 4k$.  Applying Lemma \ref{lemma:seg_inter} gives 
  \begin{align*}
    |s\cap S_{\epsilon}| \le \epsilon \sqrt{2} \sum_{i=1}^n\#(s\cap a_i) \le \epsilon \sqrt{2} \sum_{i=1}^n |s| \left(\frac{|a_i| + 1}{\ell(X)}\right)^2 \le 4k\sqrt{2} \cdot |s| \cdot \frac{\epsilon(\epsilon+1)^2}{\ell(X)^2}, 
  \end{align*}
and dividing through by $|s|$ gives the desired result.  
\end{proof}

We now have what we need to prove Lemma \ref{lemma:no-escape}, which was our goal.  

\begin{lemma}
 Let $R_n$ be any sequence of positive real numbers tending to infinity.  For the probability measures $\{\eta_{R_n}\}_{n=1}^{\infty}$ on $T_1X'$ (defined at the beginning of this section), there exists a subsequence $\{\eta_{R_{a_n}}\}$ that converges weakly to a probability measure $\eta$.  
\label{lemma:no-escape}
\end{lemma}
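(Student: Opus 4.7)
The plan is to establish tightness of the sequence $\{\eta_{R_n}\}$ on $T_1X'$ and then apply Prokhorov's theorem to extract a weakly convergent subsequence whose limit is again a probability measure. Since $T_1X'$ is locally compact but not compact---tangent vectors escape toward the punctures at $\Sigma$---the substance of the lemma is that no mass can leak out onto $\Sigma$ along the sequence.

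First I would reduce to showing that, for every $\delta>0$, there is an open neighborhood $U$ of $\Sigma$ in $X$ such that $\eta_{R_n}(\pi^{-1}(U\cap X'))<\delta$ for every $n$, where $\pi\colon T_1X'\to X'$ denotes the base-point projection. This is exactly the tightness condition, because the complement of $\pi^{-1}(U\cap X')$ fibers over the compact set $X\setminus U$ and is therefore compact in $T_1X'$.

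The key input is Lemma \ref{lemma:poly_inter}. Around each singular point $p\in\Sigma$ of cone angle $2\pi k_p$, I would take a polygonal neighborhood $S_\epsilon(p)$ with $4k_p$ right-angled sides of length $\epsilon$, as in that lemma. One technical matter to dispatch is that Lemma \ref{lemma:poly_inter} requires the saddle connection to not be parallel to any side of the polygon; since on a fixed $X$ only countably many directions contain a saddle connection, I can choose a single common rotation angle for the collection of polygons that avoids every saddle-connection direction simultaneously. For $\epsilon$ small enough, the $S_\epsilon(p)$ are pairwise disjoint and each contains only its centered singular point, and Lemma \ref{lemma:poly_inter} then yields a bound $|s\cap S_\epsilon(p)|/|s|\le C(X)\,\epsilon$ that is uniform over all saddle connections $s$ on $X$.

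From the defining property of $\eta_s$ one has $\eta_s(\pi^{-1}(A)) = |s\cap A|/|s|$ for every measurable $A\subset X'$, so averaging the pointwise bound above over all $s$ with $|s|\le R_n$ gives $\eta_{R_n}(\pi^{-1}(U\cap X'))\le |\Sigma|\cdot C(X)\,\epsilon$ with $U=\bigcup_{p\in\Sigma}S_\epsilon(p)$. Taking $\epsilon$ small delivers tightness, Prokhorov produces a weakly convergent subsequence, and tightness ensures the limit is an honest probability measure rather than a sub-probability measure. The main obstacle is the need for the singular-point concentration estimate to be uniform over the full family of saddle connections---independent of $|s|$---which is precisely what Lemma \ref{lemma:poly_inter} (and, underneath it, Lemma \ref{lemma:seg_inter}) was designed to supply.
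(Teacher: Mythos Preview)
Your proposal is correct and follows essentially the same approach as the paper: the key input in both is Lemma~\ref{lemma:poly_inter}, used to show that no mass escapes toward $\Sigma$. The only differences are cosmetic---the paper writes out an explicit diagonal/Banach--Alaoglu argument rather than invoking Prokhorov, and it handles saddle connections parallel to polygon sides by absorbing them into a finite constant (choosing the $S_\epsilon$ as dilations of one another) rather than by rotating the polygons to avoid all saddle-connection directions as you do.
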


\begin{proof}
Using standard arguments, we first show that a limit measure exists, and then, using further lemmas below, that the limit is a probability measure.   

To construct the subsequence, began by choosing increasing exhausting compact subsets $K_j\subset T_1X'$, defined as the set of unit tangent vectors over points of $X$ whose distance to a point of $\Sigma$ is at least $1/j$.   By Banach-Alaoglu, for each $j$ we can find a subsequence $a_1^j, a_2^j, \ldots$ of the positive integers such that the restrictions $\eta_{R_{a_n^j}}|_{K_j}$ converge weakly, as $n\to\infty$, to some measure on $K_j$.  In fact, we can choose these such that $\{a_n^j\}_{n=1}^{\infty}$ is a subsequence of $\{a_n^{j'}\}_{n=1}^{\infty}$, for $j'<j$.  

Let $a_n = a_n^n$ be the diagonal sequence.  Now let $f$ be a compactly supported function on $T_1X'$.   Its support is contained in some $K_{j'}$.  It follows that $\lim_{n\to\infty}  \int_{K_j} f d\eta_{R_{a_n^j}}$ exists for all $j\ge j'$, and the value is independent of $j$, because the subsequences are nested.  This gives us a linear functional on the set of compactly supported continuous function $C_c(T_1X')$, which, by the Riesz-Markov theorem, corresponds to a measure $\eta$.  By the definition of weak convergence, the measures $\eta_{R_{a_n}}$ converge to $\eta$.  

To show that $\eta$ is a probability measure, we first note that 
$$\eta(T_1X') = \lim_{n\to\infty} \eta (K_n) = \lim_{n\to\infty} \lim_{k\to\infty}\eta_{R_{a_k}}(K_n) \le 1.$$
For the opposite inequality, we need to rule out ``escape of mass'' towards the points of $\Sigma$.  Fix $\epsilon>0$.  For each $p\in \Sigma$, let $S_{\epsilon}^p$ be a polygon as in Lemma \ref{lemma:poly_inter}, which contains $p$ in its interior.  Let $S_{\epsilon} := \bigcup_{p\in \Sigma} S_{\epsilon}^p$, and let $\widehat S_{\epsilon}$ be the subset of $T_1X'$ consisting of vectors lying over points of $S_{\epsilon}$.  

From the definition of the measures $\mu_{R_n}$ and Lemma~\ref{lemma:poly_inter}, we have 
  \begin{align*}
    \eta_{R_n}(\widehat S_{\epsilon}) & = \mu_{R_n}(S_{\epsilon}) = \frac{1}{N(X,R_n)} \sum_{|s| \le R_n} \mu_s( S_{\epsilon}) = \frac{1}{N(X,R_n)} \sum_{|s| \le R_n} \sum_{p\in \Sigma} \frac{|s\cap S_{\epsilon}^p|}{|s|} \\
    &\le \frac{|\Sigma|}{N(X,R_n)} \left( \sum_{|s| \le R_n} \frac{4k\sqrt{2}\epsilon(\epsilon+1)^2}{\ell(X)^2} + C\right),
  \end{align*}
where $C$ is a constant that absorbs the saddle connections $s$ that are parallel to sides of some $S_{\epsilon}^p$.  We can choose all the $S_{\epsilon}$ to be dilations of each other, so there are only finitely many of these bad saddle connections total, hence $C$ can be chosen independently of $\epsilon$ and $n$.  

Now we can find a $j$ such that $K_j$ contains the complement of $\widehat S_{\epsilon}$.  So by the above,
$$\eta(T_1X') \ge \eta(K_j) = \lim_{n\to\infty}\eta_{R_{a_n}}(K_j) \ge 1-\lim_{n\to\infty}\eta_{R_{a_n}}(\widehat S_{\epsilon})  \ge 1- \frac{4|\Sigma|k\sqrt{2}\epsilon (\epsilon+1)^2}{\ell(X)^2}.$$
Since the last term can be made arbitrarily close to $1$ by choosing $\epsilon$ small, we conclude that $\eta(T_1X') \ge 1$, and we are finished showing that $\eta$ is a probability measure.  

\end{proof}



\subsection{Completing the proof of Theorem \ref{thm:equidist}}

\begin{proof}[Proof of Theorem \ref{thm:equidist}]
  Assume the contrary.  Then, by Banach-Alaoglu, we can find a sequence $R_n\to\infty$ such that $\mu_{R_n} \to \mu$ (weakly), where $\mu$ is a probability measure that is not area measure.  


By Lemma \ref{lemma:no-escape} above, we can find a sequence $\{a_n\}$ of positive integers, such that $\eta_{R_{a_n}}$ converges weakly to a probability measure $\eta$ on $T_1X'$.  Now $T_1X'$ admits maps
  \begin{eqnarray*}
    p: T_1X' \to X', \\
    A: T_1X' \to S^1,
  \end{eqnarray*}
where the first is the obvious projection, and the second takes a tangent vector to its angle.  Note that the push-forward $p_*\eta_R = \mu_R$, and $A_*\eta_R = \nu_R$, the measure from Theorem \ref{thm:angle-lebesgue}. 

By our initial assumption $p_*\eta = \mu$ is not area measure, while by Theorem \ref{thm:angle-lebesgue}, the pushforward $A_*\eta$ is in the Lebesgue measure class.  

Now we can disintegrate the measure $\eta$ along the map $A$, which gives for each $\theta \in S^1$ a measure $\eta_{\theta}$ on $T_1X'$, supported on the fiber $A^{-1}(\theta)$, such that for any continuous, compactly supported $f:T_1X' \to \mathbb{R}$, 

$$\int_{T_1X'} f d\eta = \int_{S^1} \left( \int _{T_1X'} f d\eta_{\theta} \right) dA_*\eta. $$

Now $T_1X'$ supports a geodesic flow.  However, this flow is not defined for all time, since some orbits land on singular points $\Sigma$ after finite time (and the flow cannot be continued in a well-defined manner afterwards).  We will say that a measure $\pi$ on $T_1X'$ is \emph{locally invariant} if for any Borel set $U\subset T_1X'$ such that all the tangent vectors lying over points of $X'$ that are greater than distance $d$ from $\Sigma$, the time $d$, as well as time $-d$, geodesic flow of $U$ (which is well-defined) has the same $\pi$ measure as $U$.  We define local invariance of a measure $\sigma$ on $X'$ under the $\theta$-directional flow (for some fixed $\theta$) in an analogous manner.  

Now the $\eta_s$ are locally invariant, since they come from individual saddle connections.  It follows that the $\eta_R$, and hence $\eta$, are also locally invariant.  We then see that for $A_*\eta$ a.e. $\theta$, the fiber measure $\eta_{\theta}$ is locally invariant, and for such $\theta$ the projection $p_*\eta_{\theta}$ is locally invariant with respect to the $\theta$-directional flow on $X'$.  

By the main result of \cite{kms1986}, for Lebesgue almost every $\theta \in S^1$, the directional flow in direction $\theta$ is uniquely ergodic.  Hence for each such $\theta$, if we could show that $p_*\eta_{\theta}$ is actually invariant under directional flow, rather than merely locally invariant, we could conclude that it has to equal to the area measure $\lambda$ on $X'$.  Since we know $p_*\eta_{\theta}$ is locally invariant, the only possible obstruction to full invariance is if $p_*\eta_{\theta}$ were to give positive mass to the set of points of $X'$ that hit $\Sigma$ under $\theta$-directional flow.  If $\theta$ is not a saddle connection direction, then this implies that some infinite ray, infinite in the backwards direction, and with forward endpoint on $\Sigma$, has positive $p_*\eta_{\theta}$ mass.  But by local invariance (applied in the backwards direction), this ray would have infinite $p_*\eta_{\theta}$ measure, while $p_*\eta_{\theta}$ is a probability measure.   Since the set of saddle connection directions has zero Lebesgue measure, we conclude that for Lebesgue almost every $\theta$, the measure $p_*\eta_{\theta}$ equals $\lambda$.  

 Since $A_*\eta$ is absolutely continuous with respect to Lebesgue measure, $p_*\eta_{\theta}=\lambda$    holds for $A_*\eta$ almost every $\theta$.   Now we will show that this implies that $p_*\eta$ must be area measure, which will be a contradiction.  Let $g:X\to\mathbb{R}$ be any continuous compactly supported function. Then, using the definition of push-forward measure, and the disintegration along $A$, we get 
\begin{align*}
  \int_{X'} g\ dp_*\eta = \int_{T_1X'} g\circ p\ d\eta = \int_{S^1} \left(\int_{T_1X'} g\circ p \ d\eta_{\theta} \right) dA^*\eta &= \int_{S^1} \left( \int_{X'} g \ d p_*\eta_{\theta} \right) dA_* \eta\\
  &= \int_{S^1} \left( \int_X g \ d\lambda \right) dA_*\eta =\int_X g \ d\lambda,  
\end{align*}
so $p_*\eta$ is area measure, contradiction.
\end{proof}

We now describe how the proof of Theorem \ref{thm:equidist} can be modified to work with any subset of saddle connections that is growing quadratically.  

\begin{proof}[Proof sketch of Theorem \ref{thm:equidist-subset}]
  We suppose, to the contrary, that there is some sequence $R_n\to\infty$ such that $\mu_{R_n,S}\to \mu$, where $\mu$ is a probability measure that is not area measure.  As in the proof above, by moving to the unit tangent bundle and passing to a further subsequence, we get a measure $\eta$ on $T_1X'$.  By Theorem \ref{thm:upper-bound} (Upper bound), the pushforward $A_*\eta$ to $S_1$ is absolutely continuous with respect to Lebesgue measure on $S_1$; this is the place where we use the fact that the collection $S$ grows at least quadratically.  Note that, in contrast to the case where $S$ is the whole set of saddle connections, Lebesgue measure need not be absolutely continuous with respect to $A_*\eta$.  For instance, we could take $S$ to be the set of saddle connections whose angle is in $[0,\pi]$, in which case $A_*{\eta}$ would be supported on $[0,\pi]$.  

The rest of the proof is the same - we only ever use that the angle measure $A_*\eta$ is absolutely continuous with respect to Lebesgue, which allows us to apply \cite{kms1986}.  
\end{proof}

\section{Proof of Proposition \ref{prop:circle-decay} via system of integral inequalities}
\label{sec:system-inequalities}

\subsection{Outline of proof}
The proof is typical of the system of integral inequalities approach, and we follow Eskin-Masur (\cite{em2001}, Proof of Proposition 7.2) closely, with the new element being that we work over a fixed interval of angles.  In the process, we will end up proving the sharper and more general Proposition \ref{prop:circle-all-complexes}.  

Here is a sketch of the proof of Proposition \ref{prop:circle-decay}, and an outline of the rest of the paper.  The first several steps do not involve the interval $I$ of angles.  

\begin{itemize} 
\item Suppose (unrealistically) that the shortest saddle connection on each $g_Tr_{\theta}X$  is just the image of the shortest saddle connection $s$ on $X$ under $g_Tr_{\theta}$.  A straightforward $SL_2(\R)$ calculation gives that in this case we would actually get exponential decay in $T$ of the integral of $1/\ell$ over the whole circle of radius $T$.  This calculation is done in Section \ref{subsec:single-vector}.  

\item We will then use a pointwise argument to take care of the case in which the shortest vector on $g_Tr_{\theta}X$ comes from some other $s'$ on $X$.  The idea is to combine $s,s'$ into a complex whose boundary consists of short saddle connections.  The necessary facts about combining complexes are proved in Section \ref{subsec:complexes}.  

\item Our goal is then prove a generalization of the desired theorem with $1/\ell$ replaced by $\alpha_k$, which is defined to be the smallest $y$ such that all complexes of complexity $k$ (and some upper bound on area) have a saddle connection of length at least $y$.   This generalization is stated at the end of Section \ref{subsec:complexes}. 

\item In Section \ref{subsec:aver-over-bound} we prove a bound for the integral of $\alpha_k$ over a circle of fixed radius $\tau$ in terms of the values of $\alpha_j$, for $j\ge k$, at the center of the circle.  This uses the pointwise argument that involves combining complexes. Unfortunately, there are large terms in the inequality that involve $\tau$. 

\item To get around this dependence on the radius $\tau$, in Section \ref{subsec:aver-over-large}, we move out in many steps of size $\tau$ to get to a large arc of radius $T$, so we can then think of $\tau$ as some constant.  This involves some hyperbolic geometry estimates.  This is the first part where we deal with arcs instead of the whole circle, and certain complications arise.  In particular, we have to replace the interval $I$ with a new interval $J$ of the same length when we move out in steps.  

\item Finally, we put everything together in Section \ref{subsec:proof}. This will involve downwards induction on $k$, so that we can deal with the higher complexity error terms that pop up.  This step also has some new complications because we are looking at arcs rather than circles.  

\end{itemize}

\subsection{Decay for a single vector}
\label{subsec:single-vector}

In this section we fix a saddle connection $s$ on $X$ and consider the length of the corresponding saddle connection on $g_tr_{\theta}X$.   This is really just a question about $SL_2(\mathbb{R})$.

\begin{prop}
\label{prop:ave-vector}
  Fix $0\le \delta <1$, and let $v$ be a vector in $\mathbb{R}^2$.  Then 
$$\int_0^{2\pi} \frac{1}{\|g_tr_{\theta}v\|^{1+\delta}}d\theta  \le c(\delta) \frac{e^{-t(1-\delta)}}{\|v\|^{1+\delta}},$$
for all $t\ge 0$, where $c(\delta)$ is a constant depending only on $\delta$.  
\end{prop}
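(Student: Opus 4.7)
The plan is a direct $SL_2(\mathbb{R})$ calculation. I would first reduce to a canonical form using two symmetries. Homogeneity of the integrand in $v$ lets us factor out $\|v\|^{-(1+\delta)}$ and assume $\|v\| = 1$. Writing $v = (\cos\phi_0, \sin\phi_0)$, the substitution $\theta \mapsto \theta - \phi_0$ (valid since we integrate over a full period) reduces us to the case $v = (1,0)$. An explicit computation gives
\[
\|g_t r_\theta v\|^2 = e^{-2t}\cos^2\theta + e^{2t}\sin^2\theta,
\]
and by the symmetries $\theta \mapsto -\theta$ and $\theta \mapsto \pi-\theta$ the integral over $[0,2\pi]$ equals four times its value on $[0,\pi/2]$.

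Next, I would split $[0,\pi/2]$ at the transition value $\theta_* := e^{-2t}$, where the two terms in $\|g_t r_\theta v\|^2$ become comparable. On $[0,\theta_*]$, the bound $\|g_t r_\theta v\| \ge e^{-t}\cos\theta \gtrsim e^{-t}$ yields a contribution of order $\theta_* \cdot e^{t(1+\delta)} = e^{-t(1-\delta)}$. On $[\theta_*, \pi/2]$, the bound $\|g_t r_\theta v\| \ge e^t \sin\theta \gtrsim e^t \theta$ (using $\sin\theta \ge 2\theta/\pi$) makes the integrand at most $\lesssim e^{-t(1+\delta)} \theta^{-(1+\delta)}$, and $\int_{\theta_*}^{\pi/2} \theta^{-(1+\delta)}\, d\theta$ is of order $\theta_*^{-\delta}/\delta = e^{2t\delta}/\delta$, contributing $\lesssim e^{-t(1-\delta)}/\delta$.

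Adding the two pieces, multiplying by four, and restoring $\|v\|^{-(1+\delta)}$ gives the bound with constant $c(\delta) = O(1/\delta)$. I do not expect any serious obstacle: once the symmetry reductions are made the estimates are elementary. The only subtlety is that $c(\delta)$ blows up as $\delta \to 0$, which reflects the fact that at $\delta = 0$ the integrand is a complete elliptic integral of the first kind whose asymptotic behavior is $\sim t \cdot e^{-t}$ rather than $e^{-t}$; so the stated inequality with a finite constant really requires $\delta$ strictly positive, and one would handle the boundary case either by a logarithmic correction or by observing that the proposition is only applied downstream with $\delta > 0$.
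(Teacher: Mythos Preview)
Your approach is essentially identical to the paper's: reduce to $v=(1,0)$ by scaling and rotation, then split the domain at the threshold where the two terms of $\|g_t r_\theta v\|^2$ balance (the paper defines the split by $e^{-2t}\cos^2\theta \gtrless e^{2t}\sin^2\theta$, which amounts to $\theta \approx e^{-2t}$, exactly your $\theta_*$), and bound each piece by keeping only the dominant term. Your observation about $\delta=0$ is also correct and worth noting: the paper's antiderivative step $[-|\theta|^{-\delta}]$ tacitly assumes $\delta>0$, and indeed the integral grows like $t e^{-t}$ when $\delta=0$, so the stated range $0\le \delta<1$ should really be $0<\delta<1$; this is harmless since the proposition is only invoked downstream with $\delta>0$.
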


\begin{proof}
  By rotating and scaling, we can assume that $v=(1,0)$.  Also, by symmetry, it suffices to consider $[-\pi/2,\pi/2]$ instead of $[0,2\pi]$ as the domain of integration.  Then 
$$\int_{-\pi/2}^{\pi/2} \frac{1}{\|g_tr_{\theta}v\|^{1+\delta}} = \int_{-\pi/2}^{\pi/2} (e^{-2t}\cos^2 \theta + e^{2t} \sin^2 \theta)^{-(1+\delta)/2} d\theta. $$

We now divide $[-\pi/2,\pi/2]$ into two pieces:
$$S_1 = \{\theta: e^{-2t} \cos^2 \theta > e^{2t}\sin^2 \theta \}$$
$$S_2 = \{\theta: e^{-2t} \cos^2 \theta < e^{2t}\sin^2 \theta \}.$$
Note that $\theta\in S_1$ iff $\theta$ is very close to $0$, and for such angles $\sin \theta \approx \theta$ and $\cos \theta \approx 1$.  It follows that there are constants $c_1,c_2$ such that 
$$[-c_1e^{-2t}, c_1 e^{-2t}] \subset S_1 \subset [-c_2e^{-2t},c_2e^{-2t}].$$
Now
\begin{eqnarray*}
  \int_{S_1} (e^{-2t}\cos^2 \theta + e^{2t} \sin^2 \theta)^{-(1+\delta)/2} d\theta &\le & \int_{S_1} (e^{-2t}\cos^2 \theta)^{-(1+\delta)/2} d\theta \\
&= &O\left(e^{(1+\delta)t} |S_1|\right) \\
&= &O\left(e^{(1+\delta)t} (2c_2e^{-2t})\right) \\
  &= &O(e^{-t(1-\delta)}).
\end{eqnarray*}
It remains to prove a similar bound for the other part of the domain of integration.
\begin{eqnarray*}
  \int_{S_2} (e^{-2t}\cos^2 \theta + e^{2t} \sin^2 \theta)^{-(1+\delta)/2} d\theta &\le & \int_{S_2} (e^{2t} \sin^2\theta)^{-(1+\delta)/2}d\theta \\
  &=& e^{-t(1+\delta)}\int_{S_2} |\sin\theta|^{-(1+\delta)}d\theta \\
  &\le & c \cdot e^{-t(1+\delta)} \int_{S_2} |\theta|^{-(1+\delta)} d\theta \\
  &\le & 2c \cdot e^{-t(1+\delta)} \int_{c_1e^{-2t}}^{\pi/2} |\theta|^{-(1+\delta)} d\theta \\
  &\le & c' \cdot e^{-t(1+\delta)} \left[-|\theta|^{-\delta}\right]|_{c_1e^{-2t}}^{\pi/2}\\
  &=& c'' \cdot e^{-t(1-\delta)}. 
\end{eqnarray*}
Putting together the two bounds yields the desired result.  
\end{proof}

\subsection{Complexes of saddle connections}
\label{subsec:complexes}

In this section we study certain collections of saddle connections called complexes.  There is no bound over the whole stratum on the number of saddle connections on $X$ shorter than some specified $\epsilon$, since one can find ``small'' subsurfaces (eg a small cylinder) which contain lots of short saddle connections.  To get around this, we build complexes, which have a notion of complexity that cannot increase indefinitely.  

\begin{defn}
  A \emph{complex} $K$ in $X$ is a closed subset of $X$ whose boundary $\partial K$ consists of a union of disjoint saddle connections (when we say that two saddle connections are disjoint, we mean that the interiors are disjoint), such that if $\partial K$ contains three saddle connections that bound a triangle, then the interior of that triangle is in $K$.   We will denote by $|\partial K|$ the length of the longest saddle connection in $\partial K$.  
\end{defn}

\begin{defn}
  Given a complex $K$, the \emph{complexity} of $K$ is the number of saddle connections needed to triangulate $K$ (a triangulation of a complex $K$ is a collection $S$ of saddle connections together with a collection of triangles $T$, with disjoint interiors, each of whose boundaries consist of elements of $S$, such that $K=S\cup T$).  
\end{defn}

Note, by an Euler characteristic argument, the number of saddle connections in any triangulation of $K$ is independent of the triangulation. 

Informally, Proposition \ref{prop:comb-complex} below says that if we have a complex whose boundary consists of short saddle connections, and we find a short saddle connection that is either disjoint from the complex or crosses the boundary, then we can extend the complex to a complex of higher complexity whose boundary saddle connections are still short.  See Figure \ref{fig:combining_complex}.  

 \begin{prop}
  Suppose $K$ is complex of complexity $k$ with non-empty boundary, and let $\sigma$ be a saddle connection which is either disjoint from $K$ or crosses $\partial K$.  Then there is a complex $K'\supset K$ of complexity $i>k$ such that 
  \begin{enumerate}[(i)]
  \item $| \partial K' | \le |\partial K | + \max(|\partial K|, |\sigma|), $
  \item $\operatorname{area}(K') \le \operatorname{area}(K) + |\partial K| \max(|\partial K|, |\sigma|).$
  \end{enumerate}

\label{prop:comb-complex}
\end{prop}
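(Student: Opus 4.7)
The plan is to build $K'$ by direct construction, treating the two cases separately.

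For Case 1, where $\sigma$ is disjoint from $K$, I would take the simplest possible candidate $K' = K \cup \sigma$. Since $\sigma$ is a closed subset of $X$ whose topological boundary (as a subset of $X$) is itself, the set $K \cup \sigma$ is closed with boundary $\partial K \sqcup \sigma$, a disjoint union of saddle connections. Any triangulation of $K$ extends to one of $K'$ by adjoining $\sigma$ as a new edge, so the complexity strictly increases. The longest boundary saddle connection has length $\max(|\partial K|, |\sigma|)$, and no area is added, so (i) and (ii) hold. The only complication is enforcing the triangle-closure condition in the definition of a complex: if $\sigma$ together with two edges of $\partial K$ happens to bound a triangle in $X$, its interior must be absorbed into $K'$. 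Each such forced triangle has two sides of length $\le |\partial K|$ and one of length $|\sigma|$, so contributes area at most $\frac{1}{2}|\partial K|\cdot\max(|\partial K|,|\sigma|)$, and a counting argument based on the at-most-triple intersection of $\sigma$ with parallel classes of boundary edges keeps the total additional area under the bound $|\partial K|\max(|\partial K|,|\sigma|)$.

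For Case 2, where $\sigma$ crosses $\partial K$, the set $K \cup \sigma$ is no longer a complex, since the sub-arcs of $\sigma \setminus \operatorname{int}K$ have interior endpoints on $\partial K$ rather than at zeros. The remedy is to enlarge $K$ by triangular regions that absorb these sub-arcs. Pick a crossing point $p$ of $\sigma$ with some edge $e \in \partial K$ and an endpoint (zero) $v$ of $\sigma$ such that the sub-arc $\sigma_{pv}$ lies in $X\setminus\operatorname{int}K$. Let $w$ be the nearer zero endpoint of $e$, and consider the broken path $e_{wp}\cup\sigma_{pv}$ from $w$ to $v$: its total length is at most $|e|+|\sigma_{pv}|\le|\partial K|+|\sigma|$. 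Straightening this path rel endpoints in $X$ produces either a single saddle connection $\tau$ or a concatenation of saddle connections, each of length at most $|\partial K|+|\sigma|=|\partial K|+\max(|\partial K|,|\sigma|)$. The bigon (or triangular region) bounded by the broken path and its geodesic representative is added to $K$, and the process is repeated for each remaining crossing-arc of $\sigma\setminus\operatorname{int}K$. The resulting $K'$ strictly contains $K$, its complexity is strictly larger (each step adds at least one edge to a triangulation), and the new boundary saddle connections are exactly the surviving sub-edges of $\partial K$, the straightened geodesics $\tau$, and parts of $\sigma$, all of length at most $|\partial K|+\max(|\partial K|,|\sigma|)$. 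For the area, each added region lies in a homotopy disk whose boundary has length at most $2(|\partial K|+|\sigma|)$, and the isoperimetric-type bound on such disks in a flat surface (they embed as immersed triangles with base $\le|\partial K|$ and height $\le|\sigma|$) gives the required area estimate when summed over all crossings.

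The main obstacle I expect is verifying that the geodesic straightening in Case 2 behaves well: one must check that it yields saddle connections (possibly after subdivision at intermediate zeros), that the enclosed region is an embedded (or at worst immersed with controlled multiplicity) disk so that the area bound makes sense, and that successive applications at different crossings do not interfere with each other in a way that inflates the complexity or the area beyond the allowed bounds. Handling this carefully requires tracking how the straightened $\tau$ interacts with the already-constructed parts of $K$ and with remaining sub-arcs of $\sigma$, and possibly reordering the crossings of $\sigma$ along $\partial K$ to resolve the additions in an order that keeps the combinatorics under control.
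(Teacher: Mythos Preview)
Your Case~1 matches the paper: take $K'=K\cup\sigma$ and fill in any forced triangle.

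In Case~2 the paper uses a different, more local construction than your geodesic straightening. It fixes the exterior endpoint $p$ of $\sigma$, lets $\sigma'$ be the sub-arc from $p$ to the first crossing with a boundary edge $s\in\partial K$, and then \emph{slides} the $s$-end of $\sigma'$ along $s$, keeping the segment from $p$ straight at every moment. The first singular point $q$ this moving segment meets yields a single saddle connection $pq$ of length at most $|\sigma'|+|s|\le|\sigma|+|\partial K|$; by construction $pq$ is either disjoint from $K$ or meets $K$ only at $q\in\partial K$. Adjoining this one saddle connection (plus at most one forced triangle) already raises the complexity, and the region swept out is literally a planar triangle with two sides of length at most $|\partial K|$ and $|\sigma|$, so (ii) follows at once.

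The obstacles you flag at the end are real, and they are not merely technical. First, your area justification does not give the stated bound: an isoperimetric inequality on a loop of length $\le 2(|\partial K|+|\sigma|)$ yields area $\lesssim(|\partial K|+|\sigma|)^2$, which is too large by a factor $\sim|\sigma|/|\partial K|$ when $|\sigma|\gg|\partial K|$; the proposition requires the added area to be \emph{linear} in $|\sigma|$. Your parenthetical that the region ``embeds as an immersed triangle with base $\le|\partial K|$ and height $\le|\sigma|$'' is exactly the missing step, and it does not follow from straightening alone: the geodesic representative $\tau$ is constrained only in length, not in position, so it may pass through $\operatorname{int}K$ (and then the ``bigon'' cannot be adjoined to $K$ to yield a complex with saddle-connection boundary) or fail to project injectively from the universal cover. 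The paper's sliding construction sidesteps both issues because the new saddle connection is produced inside a visible Euclidean triangle adjacent to $\sigma'$ and $s$. Note also that you only need to raise the complexity by one, so producing a single new edge suffices; iterating over all crossing arcs of $\sigma$ is unnecessary and is precisely where the interference problems you anticipate would bite.
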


\begin{proof}[Proof sketch (see \cite{em2001}, Proposition 6.2, for full details):]

  We will give the proof in some (but not all) of the possible cases, to give a reader a flavor of the argument.  
  If $\sigma$ is disjoint from $K$, then we just take $K'=K\cup \sigma$ which has complexity $k+1$ and the length of the boundary clearly satisfies the desired inequality (i).  The area does not increase, so (ii) is also satisfied.  

If $\sigma$ crosses $\partial K$, there are various cases to work out.  

Consider first the case where $\sigma$ has one endpoint in $K$, and one endpoint, $p$, outside of $K$.  Moving from $p$ along $\sigma$, let $s$ be the first saddle connection in $K$ that $\sigma$ hits.  Let $\sigma'$ be the segment of $\sigma$ that goes from $p$ to $s$.  By keeping the endpoint $p$ of $\sigma'$ fixed, and moving the other endpoint of $\sigma$ a small amount along $s$, we get a family of new straight segments that don't hit singular points (except at $p$).  Eventually, as we move the endpoint along $s$, the segment will hit some singular point $q$ (and $q$ is the first singular point hit, if we start at $p$ and move along the segment).  If $q$ is not in $K$, then the segment between $p$ and $q$ is disjoint from $K$ (since as we move $\sigma'$, the first time it hits $K$ in the interior of the segment must be at a singular point of $K$), so we can add the segment, which is in fact a saddle connection, to the complex.  If $q$ is in $K$, then it must be on the boundary of $K$, and again we can the saddle connection connecting $p$ to $q$ to $K$ to get a new complex.  In either case, we may have to add more triangles to the complex if the three boundary saddle connections are already in the complex.  In both cases, condition (i) is satisfied, since by the triangle inequality, the new saddle connection added has length at most $|\sigma|+|s|$, and from this we see that condition (ii) is also satisfied.  


The other case is similar.  
\end{proof}

\begin{figure}[h]
\begin{center}
  \includegraphics[scale=0.8]{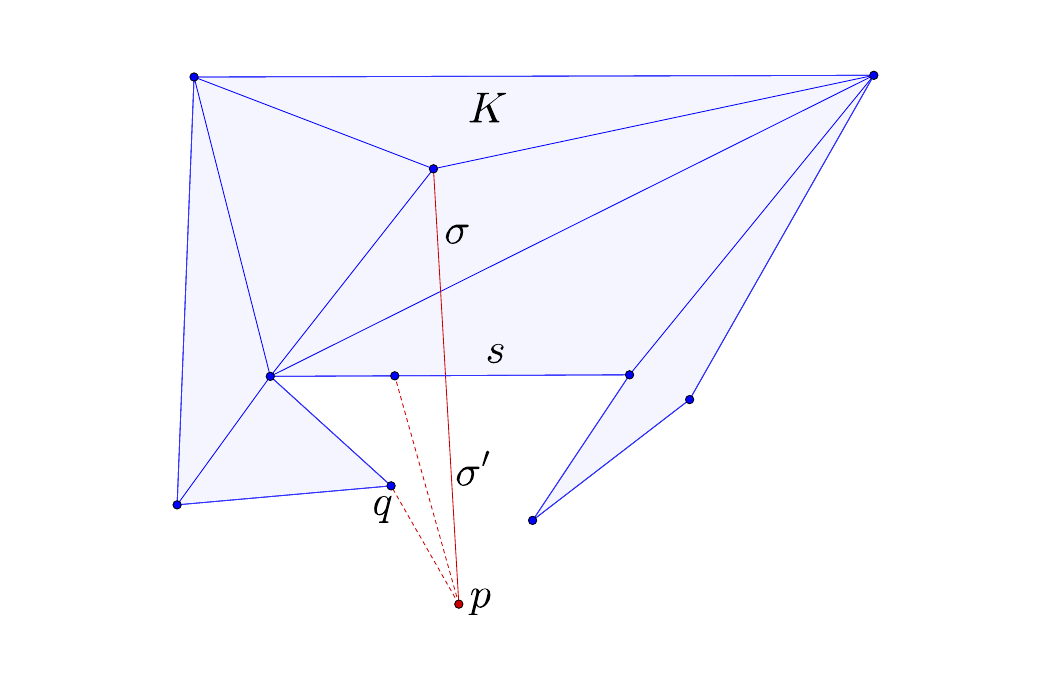}
  \caption{Adding a saddle connection to a complex, in proof of Proposition \ref{prop:comb-complex}.} \label{fig:combining_complex}
\end{center}
\end{figure}

While we are primarily interested in studying the function $1/\ell(X)$, where $\ell(X)$ is the length of the shortest saddle connection on $X$, we will be forced to also consider complexes in which all the boundary saddle connections are short. 

Fix $\delta>0$.  Let $\beta=1/2^{M+1}$, where $M$ is the complexity of $X$. We define a sequence of functions
$$\alpha_i(X)= \max_K \frac{1}{|\partial K|^{1+\delta}}, $$
where the $\max$ is taken over 
\begin{eqnarray}
  \label{eq:area-assumption}
  \{ K : K \text{ complex in } X \text{ of complexity } i, \text{ } \area(K)<2^i\beta\}. 
\end{eqnarray}
For some $i,X$, if there are no complexes satisfying (\ref{eq:area-assumption}), then we set $\alpha_i(X)=0$.   We need the area restriction to keep the complexes from getting too big - in particular, we need to avoid considering a complex that is equal to the whole surface.   Note that $\alpha_1(X)= 1/\ell(X)^{1+\delta}$, since a complex of complexity $1$ must be a single saddle connection, and this has zero area.    

The following theorem is the main technical result of this article.  The proof will be completed in Section \ref{subsec:proof}.  

\begin{prop}
  Fix a stratum $\mathcal{H}$, and $0<\delta < 1/2$.  We can find a constant $b$ such that for any interval $I\subset S^1$, there exists a constant $c_{I}$ such that for all $X \in \mathcal{H}$, 
$$\int_I \alpha_k(g_Tr_{\theta}X)d\theta < c_I\cdot e^{-(1-2\delta)T}\sum_{j\ge k}\alpha_j(X) + b \cdot |I|,$$
for all $T\ge 0$. 
\label{prop:circle-all-complexes}
\end{prop}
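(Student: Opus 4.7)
The plan is to follow the outline at the start of Section \ref{sec:system-inequalities}: combine the single-vector decay of Proposition \ref{prop:ave-vector} with the combinatorial merging device of Proposition \ref{prop:comb-complex}, iterate along the Teichm\"uller flow in small steps, and close the loop via downward induction on the complexity index $k$. The first step is to establish a \emph{local inequality} at a small fixed radius $\tau$: constants $A_\tau$, $B_\tau$ and an absolute $b$ (independent of $Y$ and $I$) such that for every $Y\in\mathcal{H}$ and every arc $I\subset S^1$,
$$\int_I \alpha_k(g_\tau r_\theta Y)\, d\theta \;\le\; A_\tau\, e^{-(1-\delta)\tau}\alpha_k(Y) \;+\; B_\tau \sum_{j>k}\alpha_j(Y) \;+\; b\cdot|I|.$$
The derivation is pointwise: for each $\theta$, let $K_\theta$ be the complex on $g_\tau r_\theta Y$ of complexity $k$ (and area $<2^k\beta$) realizing $\alpha_k$. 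Either its pullback to $Y$ is a complex of the same complexity $k$ with boundary lengths that expand to $|\partial K_\theta|$ under $g_\tau r_\theta$, in which case Proposition \ref{prop:ave-vector} applied to each boundary edge produces the $e^{-(1-\delta)\tau}\alpha_k(Y)$ term after integration; or a genuinely new short saddle connection on $g_\tau r_\theta Y$ is in play, and Proposition \ref{prop:comb-complex} merges it with the pullback to produce a bounded-boundary complex on $Y$ of strictly larger complexity $j>k$, contributing an $\alpha_j(Y)$ term. The residual $b\cdot|I|$ absorbs the thick-part contribution, where $\alpha_k$ is universally bounded in $\mathcal{H}$.

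Next I would iterate the local inequality $n$ times to reach the radius $T=n\tau$. The new wrinkle compared to the whole-circle case of \cite{em2001} is that $g_\tau$ and $r_\theta$ do not commute, so after applying one copy of the local inequality the inner integration is over a shifted family of arcs on the intermediate surfaces $g_{(n-1)\tau}r_\theta X$. A hyperbolic-geometry calculation (conjugating by rotations and geodesic translations in $SL_2(\mathbb{R})$) replaces each shifted arc by a single arc $J$ with $|J|$ comparable to $|I|$, at the cost of slightly worsening the decay exponent from $1-\delta$ to $1-2\delta$. Telescoping $n$ applications converts the $A_\tau e^{-(1-\delta)\tau}$-factors into $c_I\, e^{-(1-2\delta)T}$, where $c_I$ absorbs the $I$-dependent shape constants, while the $b\cdot|I|$ residuals form a convergent geometric series summing to a quantity of the form $b'\cdot|I|$ with $b'$ independent of $X$ and $I$.

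The main obstacle is the cross-complexity error: iteration at fixed $k$ produces $\alpha_j$ terms with $j>k$, so the bound is circular without a further device. The cure is downward induction on $k$, anchored by the observation that for $k$ above the maximum feasible complexity in $\mathcal{H}$ the admissible class of complexes is empty (this is precisely the role of the choice $\beta=1/2^{M+1}$ in the definition of $\alpha_i$, which forces $\alpha_i\equiv 0$ for all sufficiently large $i$). Assuming the proposition at every higher complexity, the $\alpha_j$ terms produced in the iteration step are bounded by the inductive hypothesis, contributing further $e^{-(1-2\delta)T}\alpha_{j'}(X)$ summands with $j'\ge j>k$ and additional constant multiples of $|I|$ that fold into $b\cdot|I|$. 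The delicate part of the bookkeeping is confirming that $b$ stays independent of both $X$ and $I$ at every stage of the induction; this works because every $|I|$-proportional error originates from a universal thick-part bound, while all $X$-dependence is quarantined in the exponentially decaying factor multiplied by $\alpha_j(X)$.
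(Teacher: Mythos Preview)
Your overall architecture matches the paper's: a local inequality at a small fixed radius $\tau$ built from the single-vector decay (Proposition~\ref{prop:ave-vector}) and the complex-merging device (Proposition~\ref{prop:comb-complex}), iterated out to radius $T=n\tau$, and closed by downward induction on $k$ starting from the trivial base case $k>M$. The local step and the inductive bookkeeping for the $b\cdot|I|$ term are both essentially right.

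The genuine gap is the iteration step, which you describe only as ``a hyperbolic-geometry calculation \ldots\ replaces each shifted arc by a single arc $J$ with $|J|$ comparable to $|I|$''. In the paper this is Lemma~\ref{lemma:shadowing} (Shadowing), and it has a specific shape you have not captured: it compares $\int_I\alpha_i(g_{t+\tau}r_\theta X)\,d\theta$ with $\int_J\operatorname{Ave}_\tau(\alpha_i)(g_t r_\theta X)\,d\theta$, where the inner $\operatorname{Ave}_\tau$ is the \emph{full-circle} average at radius $\tau$, not an arc average. This is why the paper proves the local inequality (Proposition~\ref{prop:bounded-size}) over the whole circle rather than over an arc as you do; your arc version is provable but is not the object that naturally feeds into the iteration. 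More importantly, Shadowing only holds for $t>t_0(\tau,|I|)$: the small circle of radius $\tau$ must subtend, as seen from $X$, an angle smaller than $|I|/2$. This forces the iteration to terminate at a radius $(m-1)\tau$ with $m=m(|I|)$, and the final jump from there to the center uses the crude pointwise bound $\alpha_k(g_{(m-1)\tau}r_\theta X)\le e^{(1+\delta)(m-1)\tau}\alpha_k(X)$. The resulting factor, which blows up as $|I|\to 0$, is precisely what gets absorbed into $c_I$ and is the real source of its $I$-dependence. Your sketch does not identify this threshold, and without it there is no mechanism producing an $I$-dependent constant at all.

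A smaller correction: the degradation of the exponent from $1-\delta$ to $1-2\delta$ is not a byproduct of the hyperbolic-geometry step per se. It comes from two places: first, the Shadowing constant $c_2$ multiplies the decay factor at each of $n$ steps, so one must choose $\tau$ large enough that $c_2C<e^{\tau\delta}$; second, the geometric series of higher-complexity terms produced by the induction has common ratio $c\,e^{-\tau\delta}$, which must also be forced below $1$ by the same choice of $\tau$.
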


\begin{proof}[Proof of Proposition \ref{prop:circle-decay} assuming Proposition
\ref{prop:circle-all-complexes}:]
This is just the case $k=1$.  
\end{proof}

\subsection{Averaging over a circle of bounded size}
\label{subsec:aver-over-bound}

Given a function $f$ on $\mathcal{H}$ and a point $X\in \mathbb{H}$, we let
$$\operatorname{Ave}_t(f)(X) := \frac{1}{2\pi} \int_0^{2\pi} f(g_tr_{\theta}X)d\theta. $$

\begin{prop}
  Fix $\mathcal{H}$ and $0<\delta<1$.  There exists $C>0$, such that for any $t>0$, there exist constants $b_t,w_t$ such that for any $k$ and $X\in \mathcal{H}$,
$$\operatorname{Ave}_t(\alpha_k)(X) \le C e^{-t(1-\delta)}\alpha_k(X)  + w_t \sum_{j>k} \alpha_j(X) + b_t.$$
\label{prop:bounded-size}
\end{prop}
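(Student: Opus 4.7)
The plan is to establish a pointwise bound for $\alpha_k(g_t r_\theta X)$ in terms of quantities on $X$, then integrate over $\theta$, following the Eskin--Masur strategy from their Proposition 7.2. For each $\theta$, choose a complex $K_\theta$ on $g_t r_\theta X$ realizing $\alpha_k(g_t r_\theta X)$, and view $K_\theta$ as a complex on $X$ via the canonical identification of saddle connections under $g_t r_\theta$; both complexity and area are preserved by the $SL_2(\mathbb{R})$ action. Since $K_\theta$ is therefore a valid competitor in $\alpha_k(X)$, its longest $X$-edge $v_\theta \in \partial K_\theta$ has length
$$\|v_\theta\|_X = |\partial K_\theta|_X \geq |\partial K^*|_X = \alpha_k(X)^{-1/(1+\delta)},$$
where $K^*$ is a fixed maximizing complex for $\alpha_k(X)$. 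This yields the pointwise bound
$$\alpha_k(g_t r_\theta X) = \frac{1}{|\partial K_\theta|_{g_t r_\theta X}^{1+\delta}} \leq \frac{1}{\|g_t r_\theta v_\theta\|^{1+\delta}}.$$

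Now partition $[0, 2\pi]$ according to the position of $v_\theta$ relative to $K^*$. In the \emph{good case} $v_\theta \in \partial K^*$, the length bound above forces $v_\theta$ to be one of the longest edges of $K^*$ on $X$, which form a stratum-bounded finite set $L(K^*) \subseteq \partial K^*$. By Proposition \ref{prop:ave-vector},
$$\int_{\mathrm{good}} \frac{d\theta}{\|g_t r_\theta v_\theta\|^{1+\delta}} \leq \sum_{v \in L(K^*)} \int_0^{2\pi} \frac{d\theta}{\|g_t r_\theta v\|^{1+\delta}} \leq \#L(K^*) \cdot c(\delta)\, e^{-t(1-\delta)} \, |\partial K^*|^{-(1+\delta)} \leq C\, e^{-t(1-\delta)}\, \alpha_k(X),$$
which is the first term.

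In the \emph{bad case} $v_\theta \notin \partial K^*$, after handling any degenerate containments of $v_\theta$ in the interior of $K^*$ (by first refining to a triangulation), the saddle connection $v_\theta$ is either disjoint from $K^*$ or crosses $\partial K^*$. Apply Proposition \ref{prop:comb-complex} with $K = K^*$, $\sigma = v_\theta$ to obtain a complex $K_\theta' \supset K^*$ of complexity $j = j(\theta) > k$ satisfying
$$|\partial K_\theta'| \leq |\partial K^*| + \|v_\theta\|_X \leq 2\|v_\theta\|_X, \quad \operatorname{area}(K_\theta') \leq \operatorname{area}(K^*) + |\partial K^*|\cdot\|v_\theta\|_X,$$
using $\|v_\theta\|_X \geq |\partial K^*|$. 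In \emph{subcase A} (the area bound $\operatorname{area}(K_\theta') < 2^j \beta$ holds), $K_\theta'$ competes in $\alpha_j(X)$, giving $\|v_\theta\|_X \geq \tfrac12 \alpha_j(X)^{-1/(1+\delta)}$; combined with $\|g_t r_\theta v_\theta\| \geq e^{-t}\|v_\theta\|_X$ and integrated, this contributes $\leq w_t\, \alpha_j(X)$, and summing over the finitely many $j \in \{k+1,\dots,M\}$ yields the middle term. In \emph{subcase B} (area bound fails), the estimate combined with $\operatorname{area}(K^*) < 2^k \beta$ forces $|\partial K^*| \cdot \|v_\theta\|_X > 2^k \beta$; since also $\|v_\theta\|_X \geq |\partial K^*|$, AM-GM gives the uniform lower bound $\|v_\theta\|_X \geq \sqrt{2^k \beta}$, so $\alpha_k(g_t r_\theta X) \leq e^{t(1+\delta)}(2^k \beta)^{-(1+\delta)/2}$ pointwise, integrating to the constant $b_t$.

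The main obstacle will be the careful bookkeeping: ensuring the case-partition of $[0, 2\pi]$ is measurable, verifying uniformity of $C, w_t, b_t$ over $X \in \mathcal{H}$, and handling the degenerate configurations (ties for the longest edge, or $v_\theta$ lying interior to $K^*$ but not an edge of any triangulation) by small perturbations or refinements of $K^*$. The finiteness of possible complexities $j > k$, bounded by the stratum's maximum complexity $M$, is essential for the middle-term sum and for absorbing combinatorial constants into $C$, $w_t$, and $b_t$.
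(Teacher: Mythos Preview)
Your argument follows the same Eskin--Masur strategy as the paper and is essentially correct, but the paper's organization sidesteps the obstacle you flag. The paper splits $[0,2\pi)$ according to whether the \emph{entire} pulled-back complex $K_\theta$ equals $K^*$, not whether your specific edge $v_\theta$ lies in $\partial K^*$; on the bad set it then uses \emph{any} edge $\tilde s\in\partial K_\theta$ that is disjoint from $K^*$ or crosses $\partial K^*$ (such an edge exists since $K_\theta\neq K^*$), rather than insisting on $v_\theta$. Since $|\tilde s|\le |\partial K_\theta|_X$, all the length estimates go through unchanged, and the degenerate case where $v_\theta$ lies in the interior of $K^*$ simply never arises---your suggested fix via triangulation refinement is not obviously sufficient, whereas this choice of $\tilde s$ resolves it for free.

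Two further cosmetic differences: on the good set the paper uses the swap $\int_E \min_{s\in\partial K^*}(\cdots)\le \min_{s\in\partial K^*}\int_0^{2\pi}(\cdots)$ rather than summing over the finite set $L(K^*)$ of longest edges; and for the constant term $b_t$ the paper imposes the $\theta$-independent threshold $|\partial K^*|\gtrless e^{-2t}\sqrt\beta$ on $X$ alone (guaranteeing the area condition whenever combining is attempted), rather than checking for each $\theta$ whether $\operatorname{area}(K'_\theta)<2^j\beta$ after the fact.
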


\begin{proof}
  Let $K$ be a complex on $X$ of complexity $k$ realizing the definition of $\alpha_k(X)$, and let $K'=K'(\theta)$ be a complex on $g_tr_{\theta}X$  realizing $\alpha_k(g_tr_{\theta}X)$.  

Here is the idea of the proof.  The first term on the right hand side of the bound comes from the case when $K'=g_tr_{\theta}(K)$; in this case we get a bound from Proposition  \ref{prop:ave-vector}.  The second term comes from the case where $K'$ is some other complex; in this case we get a bound by combining $(g_tr_{\theta})^{-1}K'$ with $K$ to get a higher complexity complex. For this we assume that $\partial K$ consists of short saddle connections.  The third term handles the case when none of the saddle connections in $K$ are short.  
  
Let $E\subset [0,2\pi)$ be the set of $\theta$ for which $K'=g_tr_{\theta}K$, and let $F$ be the complement.  Then
\begin{eqnarray*}
  \operatorname{Ave}_t(\alpha_k)(X) &=& \frac{1}{2\pi}\left( \int_E \alpha_k(g_tr_{\theta}X)d\theta + \int_F \alpha_k(g_tr_{\theta}X)d\theta \right). 
\end{eqnarray*}

\begin{enumerate}

\item To bound the integral over $E$ we apply Proposition \ref{prop:ave-vector}:
\begin{eqnarray}
  \int_E \alpha_k(g_tr_{\theta}X)d\theta &=& \int_E \min_{s\in \partial K} \frac{1}{\|g_tr_{\theta}s\|^{1+\delta}} d\theta \\
&\le & \min_{s\in \partial K} \int_0^{2\pi} \frac{1}{\|g_tr_{\theta} s\|^{1+\delta}} d\theta \\
  &\le & \min_{s\in \partial K} c(\delta) \frac{e^{-t(1-\delta)}}{\|s\|^{1+\delta}}= c(\delta) e^{-t(1-\delta)}\alpha_k(X). \label{eq:same-vec-bound}
\end{eqnarray}

\item Now we bound the integral over $F$ in a pointwise fashion.  First assume that
\begin{eqnarray*}
   |\partial K| \ge e^{-2t}\sqrt{\beta}.
\end{eqnarray*}
Then we get the bound
\begin{eqnarray}
\int_F \alpha_k(g_tr_{\theta}X)d\theta &\le& \int_0^{2\pi} \alpha_k(g_tr_{\theta}X)d\theta \le \int_0^{2\pi} (e^t)^{1+\delta} \alpha_k(X)d\theta \\
  &\le& 2\pi (e^{t})^{1+\delta} (e^{-2t}\sqrt{\beta})^{-(1+\delta)}=:b_t. 
\label{eq:all-short}
\end{eqnarray}

\item Now assume that
\begin{eqnarray}
   |\partial K| \le e^{-2t}\sqrt{\beta}.
 \label{eq:assume-small}
\end{eqnarray} 
Note that because of our assumption (\ref{eq:area-assumption}) on the area of the complexes that we allow in the definition of $\alpha_k$, we know that neither $K,K'$ is all of the surface, and hence each must have non-empty boundary.  It follows that some saddle connection $\tilde s$ in $\partial (g_tr_{\theta})^{-1}K'$ must either be disjoint from $K$, or cross $\partial K$.  By Proposition \ref{prop:comb-complex}, from $\tilde s$ and $K$ we can form a new complex $\tilde K$ on $X$ with complexity $i>k$ such that 
\begin{eqnarray*} 
|\partial \tilde K | &\le& |\partial K| + \max(|\partial K|,|\tilde s |) \le |\partial K| +\max(|\partial K|, |\partial (g_tr_{\theta})^{-1}K'|) \\
&\le &  2|\partial (g_tr_{\theta})^{-1}K'| \le  2 e^t |\partial K'|,
\end{eqnarray*}
and
\begin{eqnarray}
  \area(\tilde K)  \le \area(K) + |\partial K| \max(|\partial K|, |\tilde s|).
  \label{eq:area}
\end{eqnarray}
Now $K'$ realizes the maximum in the definition of $\alpha_k(g_tr_{\theta}X)$, so in particular $$|\partial K'| \le |\partial (g_tr_{\theta}K)| \le e^t |\partial K| \le e^t(e^{-2t} \sqrt{\beta}) =e^{-t} \sqrt{\beta}.$$
Thus $|\tilde s| \le e^t|\partial K'| \le \sqrt{\beta}$. 
Starting with the inequality (\ref{eq:area}), and using bound on area from (\ref{eq:area-assumption}) together with our assumption (\ref{eq:assume-small}) on $|\partial K|$, we get 
\begin{eqnarray*}
  \area(\tilde K) &\le & 2^k\beta + (e^{-2t}\sqrt{\beta})\max(e^{-2t}\sqrt{\beta}, \sqrt{\beta}) \\
  &  \le & 2^k\beta + \beta <  2^{k+1}\beta  \le 2^i\beta. 
\end{eqnarray*}

So the complex $\tilde K$ is one of those over which the maximum in the definition of $\alpha_i(X)$ is taken.  Also, since $\beta=1/2^{M+1}$, we have $\area(\tilde K) <1$, and hence $\partial \tilde K$ must be non-empty.  It follows that  
$$\alpha_i(X) \ge \frac{1}{|\partial \tilde K|^{1+\delta}} \ge \frac{1}{(2e^t)^{1+\delta}} 
\frac{1}{|\partial K'|^{1+\delta}} = \frac{1}{(2e^t)^{1+\delta}} \alpha_k(g_tr_{\theta}(X)). $$
So 
$$\alpha_k(g_tr_{\theta}(X)) \le (2e^t)^{1+\delta} \alpha_i(X).$$

Using this pointwise bound we get (under the assumption (\ref{eq:assume-small})) that

\begin{eqnarray}
  \int_F \alpha_k(g_tr_{\theta}X)d\theta \le \int_F (2e^t)^{1+\delta}\sum_{i>k}\alpha_i(X) \le 2\pi (2e^t)^{1+\delta} \sum_{i>k} \alpha_i(X). 
\label{eq:new-complex-bound}
\end{eqnarray}

\end{enumerate}

Now we put the three parts together: by  (\ref{eq:same-vec-bound}), (\ref{eq:all-short}), and (\ref{eq:new-complex-bound}), we have
\begin{eqnarray*}
  \operatorname{Ave}_t(\alpha_k)(X) \le \frac{c(\delta)}{2\pi} e^{-t(1-\delta)}\alpha_k(X) + (2e^t)^{1+\delta} \sum_{i>k} \alpha_i(X) + \frac{b_t}{2\pi}.
\end{eqnarray*}

\end{proof}

\subsection{Averaging over larger arcs}
\label{subsec:aver-over-large}
To prove Proposition \ref{prop:circle-all-complexes}, we need to compare the average of the function $\alpha_i$ over an arc from a large circle to the value of $\alpha_i$ at the center of the circle.  Proposition \ref{prop:bounded-size} gives a comparison, but the $w_t$ term means as we make $t$ large we lose control over the size of the average.  To get around this, we move in steps, repeatedly applying Proposition \ref{prop:bounded-size} over circles of some fixed size $\tau$.  To go from an arc of a circle of radius $t$ to one of radius $t+\tau$, we need Lemma \ref{lemma:shadowing} (Shadowing) below.  This is one of the steps in the proof that would be substantially easier if we were working just with the whole circle $I=[0,2\pi]$, rather than with any interval of angles.  

We first need a preliminary hyperbolic geometry lemma.  Let us choose $\kappa >0$ such that for any $X,Y\in \mathcal{H}$ that are in the same $SL_2(\mathbb{R})$ orbit, and satisfy $d(X,Y)<\kappa$, we have $\frac{1}{2} \alpha_i(X) \le \alpha_i(Y) \le 2 \alpha_i(X)$.  Here $d(X,Y)$ is defined as the Teichm\"uller distance between the projections of $X,Y$ to the moduli space.  Such a choice of $\kappa$ is possible because the Teichm\"uller distance of the projections controls the ratio of lengths of corresponding saddle connections on the surfaces.  

Now let $U=U(t,\tau,\kappa):=\{\phi : d(g_{\tau}r_{\phi}g_tX, X) \ge t+ \tau -\kappa\}$.  In Figure \ref{fig:shadowing}, $U$ is the set of angles $\phi$ corresponding to the dashed segment.  

\begin{figure}[h]
\begin{center}
  \includegraphics[scale=0.61]{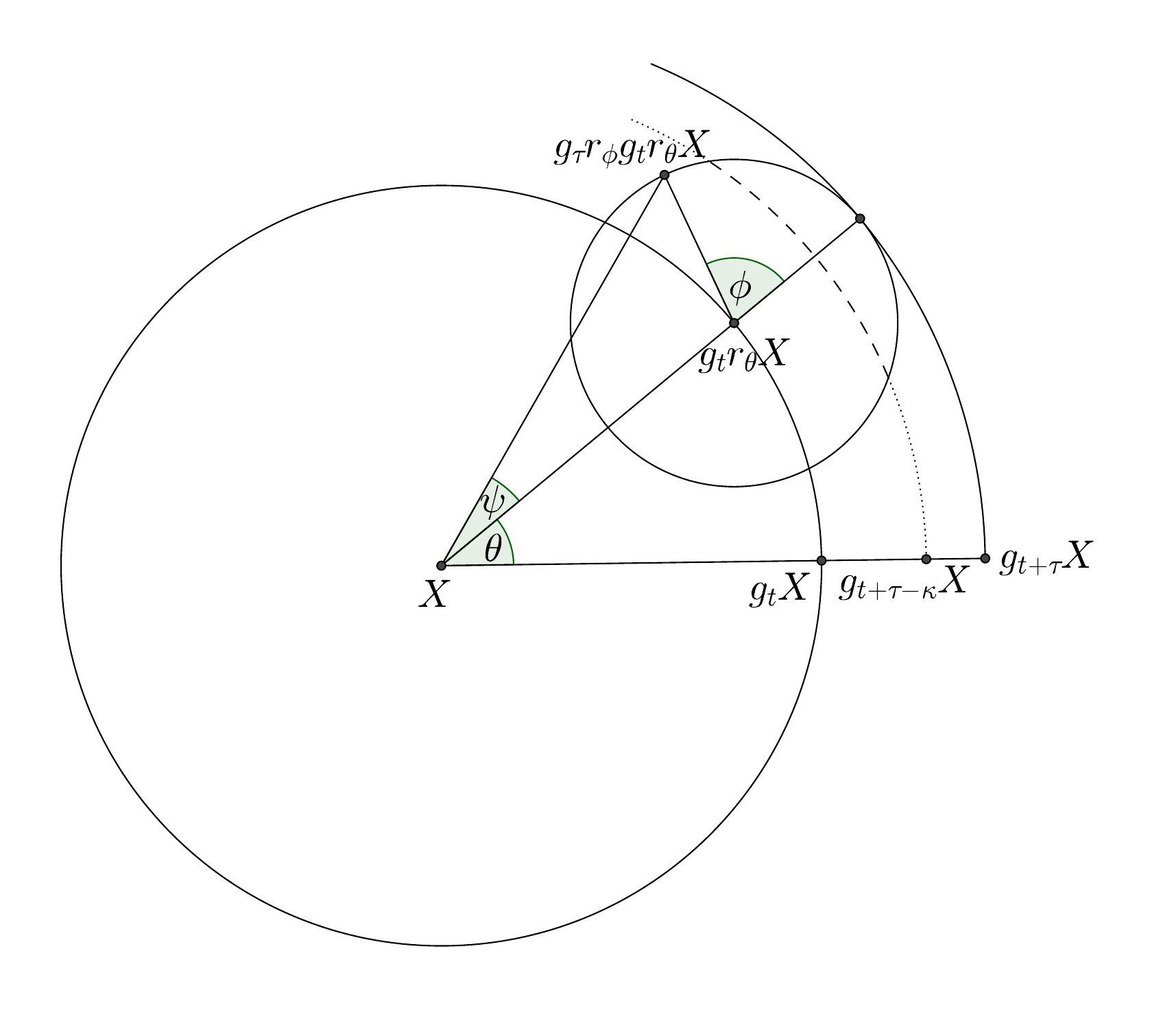}
  \caption{Comparing averages for Lemma \ref{lemma:shadowing} (Shadowing)}    \label{fig:shadowing}
\end{center}
\end{figure}

The following Lemma is closely related to \cite{em2001} Lemma 7.6 and results in \cite{athreya2006} Section \S4.

\begin{lemma}
  Fix $\kappa>0$.  There exists $c'>0$ such that for all $t,\tau$ we have $|U(t,\tau,\kappa)| \ge c'$.  
\label{lemma:angles-outside}
\end{lemma}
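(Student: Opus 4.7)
The plan is to reduce the statement to a direct hyperbolic law of cosines computation. Under the $SL_2(\R)$-action, the orbit of $X$ in Teichm\"uller space carries an isometric copy of $\mathbb{H}^2 = SL_2(\R)/SO(2)$ (up to a finite stabilizer), with the Teichm\"uller metric identified with the hyperbolic metric. In this model $g_t$ is hyperbolic translation by $t$ along a fixed geodesic through $X$, and $r_\phi$ is hyperbolic rotation by $\phi$ about $X$.

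I would consider the triangle in this $\mathbb{H}^2$ with vertices $X$, $g_{-\tau}X$, and $P_\phi := r_\phi g_t X$. The two known sides have lengths $\tau$ and $t$; the third side has length $d(P_\phi, g_{-\tau}X) = d(g_\tau r_\phi g_t X, X)$ by isometry. Since the direction at $X$ toward $g_{-\tau}X$ is antipodal to the direction toward $g_tX$, and $r_\phi$ rotates by $\phi$, the angle of the triangle at $X$ is $\pi - \phi$. The hyperbolic law of cosines then gives the closed form
$$\cosh d(g_\tau r_\phi g_t X, X) = \cosh t \cosh\tau + \sinh t \sinh\tau \cos\phi.$$
The condition $d \ge t + \tau - \kappa$ (which is vacuous for $t + \tau < \kappa$) is therefore equivalent to
$$\cos\phi \ge f(t,\tau,\kappa) := \frac{\cosh(t+\tau-\kappa) - \cosh t \cosh\tau}{\sinh t \sinh\tau}.$$

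The main step is to establish the uniform bound $f(t,\tau,\kappa) \le 2e^{-\kappa} - 1 < 1$ for all $t,\tau \ge 0$ with $t + \tau \ge \kappa$. I would switch to the variables $s = t+\tau$ and $\Delta = t-\tau$, use the product-to-sum identities for $\cosh t \cosh\tau$ and $\sinh t \sinh\tau$, and check that $f$ is monotonically decreasing in $\cosh\Delta$, so its maximum is attained at $\Delta = 0$ (i.e., $t = \tau$). At that extremum the inequality reduces, after expanding $\cosh(s-\kappa)$ and using $\cosh\kappa - e^{-\kappa} = \sinh\kappa$, to the one-variable inequality $\sinh\kappa\cdot e^{-s} \le 1 - e^{-\kappa}$. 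This follows from $s \ge \kappa$ together with the elementary identity $2(1 - e^{-\kappa}) - (1 - e^{-2\kappa}) = (1 - e^{-\kappa})^2 \ge 0$.

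With this bound in hand, the set $U$ contains $\{\phi : \cos\phi \ge 2e^{-\kappa} - 1\}$, which has Lebesgue measure $2\arccos(2e^{-\kappa}-1) > 0$ independent of $t$ and $\tau$, and setting $c' := 2\arccos(2e^{-\kappa} - 1)$ completes the proof. The main obstacle is the uniform bound on $f$, an elementary but somewhat delicate one-variable calculation; the passage between Teichm\"uller space and moduli space (finite stabilizers, Veech groups) affects the constants in the isometric identification but not the validity of the hyperbolic computation.
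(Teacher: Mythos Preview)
Your proof is correct and follows essentially the same route as the paper's: both set up a hyperbolic triangle with sides $t,\tau$ and included angle $\pi-\phi$, apply the law of cosines to obtain $\cosh d = \cosh t\cosh\tau + \sinh t\sinh\tau\cos\phi$, and then extract a uniform lower bound on $|U|$ from this formula. The only difference is in the endgame: the paper uses the large-$x$ approximation $\cosh x \approx \sinh x \approx e^x/2$ to get $d \approx t+\tau+\log\frac{1+\cos\phi}{2}$ (handling bounded $t,\tau$ by an ad hoc shrinking of the constant), whereas you carry out an exact computation that yields the explicit value $c' = 2\arccos(2e^{-\kappa}-1)$ and avoids the case split.
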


The lemma says that, in the hyperbolic plane, if we consider a circle of radius $t$ centered at some point $p$, then at least a definite, positive proportion of any circle of radius $\tau$ centered at a point on the first circle will lie outside the disk of radius $t+\tau -\kappa$ centered at $p$.  The proportion depends on $\kappa$, but not on $t$ or $\tau$.  

\begin{proof}
  Consider the triangle with vertices $X$, $Y:=g_tX$, and $Z:=g_{\tau}r_{\phi}g_tX$.  Note that $|XY|=t$, $|YZ|=\tau$, and $\angle XYZ=\pi - \phi$.  Applying the Hyperbolic Law of Cosines to the triangle $XYZ$ gives 
$$\cosh (XZ) = \cosh(t) \cosh (\tau) + \sinh (t) \sinh(\tau) \cos \phi.$$
Now for large $x$, we have that $\cosh(x) \approx \frac{e^x}{2} \approx \sinh(x)$.  Using this approximation we get that 
$$\frac{e^{|XZ|}}{2} \approx \left(\frac{e^t}{2}\frac{e^{\tau}}{2}\right) (1+\cos \phi),$$
so
$$|XZ| \approx t+ \tau  + \log \frac{1+\cos \phi}{2}.$$
Now $\phi \in U$ iff $|XZ|\ge t+\tau -\kappa$.  If the above approximation is accurate enough, we get that $|XZ|\ge t+\tau-\kappa$, when $|\phi|\le c'$, for some $c'$ independent of $t,\tau$, as desired.  To justify the approximation, we can absorb the error in a multiplicative term that is close to $1$ when $t,\tau$ are large, and this just makes $c'$ smaller by a multiplicative factor. To handle bounded $t,\tau$, we make the constant $c'$ smaller if necessary.  
\end{proof}

\begin{lemma}[Shadowing] 
There exists a constant $c_2>0$ such that for any $\tau\ge 0$ and $I\subset S^1$ an interval, there exists $t_0(\tau, |I|) \ge 0$ such that for any $X\in \mathcal{H}$, and $t>t_0$, we have
$$\int_I \alpha_i(g_{t+\tau}r_{\theta}X) \le c_2 \int_J \operatorname{Ave}_{\tau}(\alpha_i)(g_tr_{\theta}X)d\theta, $$
where $J\subset S^1$ is an interval (that could depend on all the other parameters) with $|J|=|I|$. 
\label{lemma:shadowing}
\end{lemma}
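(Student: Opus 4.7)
The plan is to set up a change-of-variables / shadowing argument in the hyperbolic plane $\mathbb{H}^2 \cong SL_2(\mathbb{R})/SO(2)$ that relates a one-parameter family of points on the big $(t+\tau)$-circle centered at $X$ to a two-parameter family produced by concatenating $t$- and $\tau$-motions. Writing $p(\theta',\phi) := g_\tau r_\phi g_t r_{\theta'} X$, we observe that when $\phi \in U(t,\tau,\kappa)$ (the set from Lemma~\ref{lemma:angles-outside}), the point $p(\theta',\phi)$ lies within Teichm\"uller distance $\kappa$ of the big circle of radius $t+\tau$ centered at $X$. By the definition of $\kappa$, this yields
\begin{equation*}
\alpha_i(g_{t+\tau} r_{\bar\theta}X) \le 2\, \alpha_i(p(\theta',\phi)),
\end{equation*}
where $\bar\theta = \bar\theta(\theta',\phi)$ denotes the angular coordinate of the radial shadow of $p(\theta',\phi)$ on the $(t+\tau)$-circle.

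The key geometric calculation is to control the shadow map $\Phi_\phi: \theta' \mapsto \bar\theta(\theta',\phi)$. Since $\Phi_0$ is the identity (because $g_\tau r_0 g_t r_{\theta'} X = g_{t+\tau} r_{\theta'} X$) and $\Phi_\phi$ varies smoothly in $\phi$, a direct computation using the hyperbolic law of cosines (exactly as in the proof of Lemma~\ref{lemma:angles-outside}) shows two things: for $\phi$ in a compact subset $U_0 \subset U$ of measure at least $c'/2$, the derivative $\partial \Phi_\phi/\partial \theta'$ is bounded above and below by absolute constants $C_1^{\pm 1}$ (depending only on $\kappa$); and the maximum shift $\sup_{\theta',\phi \in U_0}|\Phi_\phi(\theta') - \theta'|$ tends to $0$ as $t\to\infty$.

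With these geometric facts in hand, the argument is short. Take $t_0(\tau,|I|)$ large enough that, for $t\ge t_0$, all shifts $|\Phi_\phi(\theta') - \theta'|$ with $\phi\in U_0$ are smaller than, say, $|I|/100$, and pick $J$ to be a translate of $I$ of equal length $|J|=|I|$ positioned so that $\Phi_\phi(J) \supset I$ for every $\phi\in U_0$. For each such $\phi$, the change of variables $\theta = \Phi_\phi(\theta')$ combined with the pointwise inequality above gives
\begin{equation*}
\int_I \alpha_i(g_{t+\tau} r_\theta X)\, d\theta \le 2C_1 \int_J \alpha_i(p(\theta',\phi))\, d\theta'.
\end{equation*}
Averaging this over $\phi \in U_0$ and enlarging to $\phi \in [0,2\pi]$ (allowed since $\alpha_i\ge 0$) yields
\begin{equation*}
\tfrac{c'}{2}\int_I \alpha_i(g_{t+\tau} r_\theta X)\, d\theta \le 2C_1 \int_J \int_0^{2\pi} \alpha_i(p(\theta',\phi))\, d\phi\, d\theta' = 4\pi C_1 \int_J \operatorname{Ave}_\tau(\alpha_i)(g_t r_{\theta'} X)\, d\theta',
\end{equation*}
and we obtain the lemma with $c_2 = 8\pi C_1/c'$.

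The main obstacle is the hyperbolic geometry needed for the second step: verifying that $\Phi_\phi$ has Jacobian bounded above and below uniformly in $t$, and that the shifts $\Phi_\phi(\theta')-\theta'$ shrink as $t\to\infty$. This is an essentially explicit $SL_2(\mathbb{R})$ calculation on the Iwasawa/Cartan factors of $g_\tau r_\phi g_t$, but it is exactly what forces the threshold $t_0(\tau,|I|)$ in the statement: for small $t$ the shifts can be on the order of $|I|$, in which case one could no longer arrange $\Phi_\phi(J) \supset I$ for every $\phi \in U_0$ while maintaining $|J|=|I|$.
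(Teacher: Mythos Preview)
Your approach is the same in spirit as the paper's---shadow the $(t+\tau)$-circle by the two-step motion $g_\tau r_\phi g_t r_{\theta'}$, compare via the $\kappa$-property, use Lemma~\ref{lemma:angles-outside} to get a definite amount of $\phi$---but there is a genuine gap in the step where you produce $J$.

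First a simplification you are missing: the shadow map $\Phi_\phi$ is \emph{exactly} a translation. Writing the KAK decomposition $g_\tau r_\phi g_t = r_\alpha g_s r_\beta$ (so $s,\alpha,\beta$ depend on $t,\tau,\phi$ only), one has $g_\tau r_\phi g_t r_{\theta'} X = r_\alpha g_s r_{\beta+\theta'} X$, and projecting to the Teichm\"uller disk (i.e.\ modding out the left $SO(2)$) gives the point at radius $s$ and angle $\beta+\theta'$. Hence $\bar\theta(\theta',\phi)=\theta'+\beta(\phi)$ and $\partial\Phi_\phi/\partial\theta'\equiv 1$; the Jacobian estimate you worry about is automatic.

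Precisely because $\Phi_\phi$ is a translation, your choice of $J$ cannot be made. You need $J\supset \Phi_\phi^{-1}(I)=I-\beta(\phi)$ for \emph{every} $\phi\in U_0$, hence $J\supset\bigcup_{\phi\in U_0}(I-\beta(\phi))$. However small the shifts $|\beta(\phi)|$ are, as soon as $\beta$ takes more than one value on $U_0$ (and it does---by the hyperbolic trigonometry you invoke, $\beta$ ranges over an interval of length comparable to $\sinh\tau/\sinh t$), that union has length strictly greater than $|I|$, so no interval $J$ with $|J|=|I|$ can contain it.

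The paper's fix is simple: integrate over the doubled interval $2I$ on the right-hand side (so that $\theta'+2I\supset I$ holds for every shift once $t\ge t_0(\tau,|I|)$), obtain the bound with $2I$ in place of $J$, and only \emph{afterwards} split $2I=J_1\cup J_2$ into two intervals of length $|I|$ and take $J$ to be the half on which $\int \operatorname{Ave}_\tau(\alpha_i)(g_tr_\theta X)\,d\theta$ is larger. Inserting this two-line trick into your argument repairs it completely.
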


The new interval $J$ is needed to take care of possible edge effects near the boundary of $I$. The reason that we need to take $t$ sufficiently large is also related to edge effects.  So the statement and proof are more complicated than for $I=[0,2\pi]$, since this interval has no boundary. 

\begin{proof}

We want to parametrize the point $g_{\tau}r_{\phi}g_tr_{\theta}X$ by the angle $\psi$ as indicated in the diagram, i.e. we want $g_{\tau}r_{\phi}g_tr_{\theta}X = g_sr_{\theta+\psi}X$, where $s,\psi$ are functions of $t,\tau,\phi$.  Let $\Psi:S^1 \to S^1$ be the map taking $\phi$ to $\psi$.  On small intervals, this map is a diffeomorphism to its image.   Now changing the variable from $\phi$ to $\psi$, and using the defining property of $\kappa$, we get 
\begin{eqnarray*}
  \operatorname{Ave}_{\tau}(\alpha_i)(g_tr_{\theta}X) &\ge & \frac{1}{2\pi} \int_{U} \alpha_i (g_{\tau}r_{\phi}g_tr_{\theta}X)d\phi   \\
  & = & \frac{1}{2\pi} \int_{\Psi(U)} \alpha_i(g_{\tau}r_{\phi(\psi)} g_tr_{\theta}X)\left|\frac{d\phi}{d\psi}\right| d\psi \\
  &\ge & \frac{1}{2\pi} \int_{\Psi(U)}\frac{1}{2} \alpha_i(g_{t+\tau}r_{\theta+\psi}X )\left|\frac{d\phi}{d\psi}\right| d\psi.
\end{eqnarray*}

Now, to estimate the right hand term in the statement of the lemma, we let $A=  \left( \begin{matrix} 1 & 1 \\ 0 & 1 \end{matrix} \right)$ and perform the linear change of variables
$$\left(  \begin{matrix} \theta \\ \psi \end{matrix} \right) \mapsto \left( \begin{matrix} \xi \\ \psi \end{matrix} \right) = A \left( \begin{matrix} \theta \\ \psi \end{matrix} \right) = \left( \begin{matrix} \theta +\psi  \\ \psi \end{matrix} \right),  $$
where the Jacobian of $A$ is $1$.  Let $2I$ be the interval with the same center as $I$ and twice the length. Integrating the inequality above over $\theta$ and performing this change of variables gives 
\begin{eqnarray}
\int_{2I} \operatorname{Ave}_{\tau}(\alpha_i)(g_tr_{\theta}X)d\theta & \ge & \int_{2I} \frac{1}{4\pi} \int_{\Psi(U)} \alpha_i(g_{t+\tau}r_{\theta+\psi}X )\left|\frac{d\phi}{d\psi}\right| d\psi d\theta \\
  &=& \frac{1}{4\pi} \int \int _{A\left(2I \times \Psi(U)\right)} \alpha_i(g_{t+\tau} r_{\xi} X)  \left|\frac{d\phi}{d\psi}(\psi)\right| d\xi d\psi \\
 &=& \frac{1}{4\pi} \int_{\Psi(U)} \int_{\psi + 2I} \alpha_i(g_{t+\tau} r_{\xi} X) \left|\frac{d\phi}{d\psi}(\psi)\right| d\xi d\psi \\
  &=& \frac{1}{4\pi}  \int_{\Psi(U)}\left( \left|\frac{d\phi}{d\psi}(\psi)\right|\int_{\psi + 2I} \alpha_i(g_{t+\tau}r_{\xi}X) d\xi \right) d\psi.  \label{eq:change-var}
\end{eqnarray}
Recall that $U$, and hence $\Psi(U)$, depends on $t,\tau$.  Now $\Psi(U)$ is an interval, and the largest angle $\gamma\in \Psi(U)$ corresponds to a ray through $X$ that is tangent to the smaller circle.  Hence we get a hyperbolic right triangle with hypotenuse of length $t$, a leg of length $\tau$, and the angle opposite this leg being $\gamma$.  Hence, by hyperbolic trigonometry, 
$$\sin \gamma = \frac{\sinh \tau}{\sinh t}.$$
We are considering $\tau$ as fixed for now.  Choose $t_0$ sufficiently large so that for any $t\ge t_0$ we have $|\gamma|< |I|/2$.  For such $t$, for any $\psi \in \Psi(U)$, we get that $|\psi| \le |\gamma|  < |I|/2$, and so $\psi + 2I \supset I$.  Continuing from (\ref{eq:change-var}), we get
\begin{eqnarray}
  \int_{2I} \operatorname{Ave}_{\tau}(\alpha_i)(g_tr_{\theta}X)d\theta &\ge&  \frac{1}{4\pi}  \int_{\Psi(U)}\left( \left|\frac{d\phi}{d\psi}(\psi)\right|\int_{I} \alpha_i(g_{t+\tau}r_{\xi}X) d\xi \right) d\psi \\
&=& \frac{1}{4\pi} \left( \int_{\Psi(U)}\left|\frac{d\phi}{d\psi}(\psi)\right| d\psi \right) \left(\int_{I} \alpha_i(g_{t+\tau}r_{\xi}X) d\xi \right)\\
&=&   \frac{1}{4\pi} \left( \int_U d\phi \right) \int_{I} \alpha_i(g_{t+\tau}r_{\xi}X) d\xi\\
&\ge &  \frac{c'}{4\pi}\int_{I} \alpha_i(g_{t+\tau}r_{\xi}X) d\xi, \label{eq:twice-interval}
\end{eqnarray}
where we have used Lemma \ref{lemma:angles-outside} for the last inequality.  

Now we can write $2I$ as the union of two intervals $J_1,J_2$, with $|J_1|=|J_2|=|I|$.  Then
\begin{eqnarray*}
  \max_{j=1,2} \int_{J_i} \operatorname{Ave}_{\tau}(\alpha_i)(g_tr_{\theta}X) \ge \frac{1}{2} \int_{2I} \operatorname{Ave}_{\tau}(\alpha_i)(g_tr_{\theta}X)
\end{eqnarray*}
Let $J$ be the interval on which the max is achieved.  Combining with (\ref{eq:twice-interval}), we get
\begin{eqnarray*}
  \int_{J} \operatorname{Ave}_{\tau}(\alpha_i)(g_tr_{\theta}X) \ge \frac{1}{2} \int_{2I} \operatorname{Ave}_{\tau}(\alpha_i)(g_tr_{\theta}X) \ge  \frac{c'}{8\pi}\int_{I} \alpha_i(g_{t+\tau}r_{\xi}X) d\xi,
\end{eqnarray*}
which yields the desired result.

\end{proof}

\subsection{Completing the proof}
\label{subsec:proof}

\begin{proof}[Proof of Proposition \ref{prop:circle-all-complexes}]
The strategy is to use Proposition \ref{prop:bounded-size} repeatedly, moving out to a large radius arc, applying Lemma \ref{lemma:shadowing} for comparisons along the way.   This will give an upper bound on the arc integral that has one summand that is an exponentially decaying term times $\alpha_k(X)$, plus a mess of higher complexity terms and constants.  At every stage, we accumulate higher complexity terms $\alpha_j$, $j>k$.  When we have done $n$ stages, there are $n$ higher complexity terms, each of which is a product of a higher complexity term from an arc corresponding to $i$ steps, multiplied by $n-i$ decaying terms coming from Proposition \ref{prop:bounded-size}.  The $i$ step arc part we control by downwards induction on $k$, and the $n-i$ decaying terms each decay a bit faster than a $1$ step arc term would.  The aggregate of the $n$ higher complexity terms is thus bounded by a geometric series times $\sum_{j>k} \alpha_j(X)$, and the series decays around as fast as the $\alpha_k(X)$ term.  We can't get all the way back to the center $X$, because of the lower bound on $t$ needed to apply Lemma \ref{lemma:shadowing} (this is an additional place where the proof is more complicated than for $I=[0,2\pi]$).  So we stop at some arc whose radius depends on $|I|$, and then approximate $\alpha_k$ on this arc by a large factor (depending on $I$) multiplied by $\alpha_k(X)$.   

So we proceed by downwards induction on $k$.  The statement is trivial when $k>M$ (recall that $M$ is the complexity of the whole surface $X$), because for this $k$ we have defined $\alpha_k(X)=0$ for all $X$.  So assume the result holds for all $j>k$, and we will prove it for $k$.  Consider fixing $\tau>0$.  Let $m$ be the smallest positive integer such that $(m-1)\tau > t_0(\tau,|I|)$, where $t_0$ is the function given by Lemma \ref{lemma:shadowing} (Shadowing). Using Lemma \ref{lemma:shadowing}, and then Proposition \ref{prop:bounded-size}, we have that for any $n\ge m$,  
\begin{eqnarray*}
  \int_I \alpha_k(g_{n\tau} r_{\theta} X) d\theta &\le& c_2 \int_J \operatorname{Ave}_{\tau}(\alpha_k)(g_{(n-1)\tau}r_{\theta}X) d\theta \\
  &\le & c_2\int_J \left( C e^{-\tau(1-\delta)} \alpha_k(g_{(n-1)\tau}r_{\theta}X) + w_{\tau} \sum_{j>k} \alpha_j(g_{(n-1)\tau}r_{\theta}X) + b_{\tau} \right) d\theta\\
&=& c_2Ce^{-\tau(1-\delta)} \int_J \alpha_k(g_{(n-1)\tau}r_{\theta}X) d\theta + c_2w_{\tau}\left( \sum_{j>k}\int_J \alpha_j(g_{(n-1)\tau}r_{\theta}X)\right)d\theta + |J| c_2b_{\tau} .
\end{eqnarray*}
  Applying the inductive hypothesis to the terms in the middle sum, enlarging the constants $b_{\tau}, w_{\tau}=w_{\tau,I}$, and using the fact that $|J|=|I|$, we get 
  \begin{eqnarray*}
   \int_I \alpha_k(g_{n\tau} r_{\theta} X) d\theta &\le & c \cdot e^{-\tau(1-\delta)} \int_J \alpha_k(g_{(n-1)\tau} r_{\theta}X) d\theta +  w_{\tau,I} \left( \sum_{j>k} (e^{-(1-2\delta)})^{(n-1)\tau} \alpha_j(X) \right) +  |I| b_{\tau},
\end{eqnarray*}
where the constant $c$ is independent of $\tau$.  This holds for any $n\ge m$, and we can replace $I$ by any interval of the same length (which might entail a different $J$, but $m$ only depends on the length $|I|$).  
Let $J_{n-1}=J$.  Repeatedly applying the inequality for $n~-~1,{n-2},\ldots,m$, with $I$ replaced by $J_{n-1}, J_{n-2},\ldots, J_m$ (the new intervals we get at each stage, which all have length $|I|$), we get 
\begin{align*}
    \int_I \alpha_k(g_{n\tau}r_{\theta}X)d\theta &\le  \left(c e^{-\tau(1-\delta)}\right)^{n-m+1} \int_{J_{m-1}} \alpha_k(g_{(m-1)\tau}r_{\theta}X) d\theta\\
  &+ w_{\tau,I} \sum_{j> k} \left( e^{-\tau(1-2\delta)(n-1)}  + e^{-\tau(1-2\delta)(n-2)}ce^{-\tau(1-\delta)} + e^{-\tau(1-2\delta)(n-3)}(ce^{-\tau(1-\delta)})^2 + \cdots \right) \alpha_j(X) \\
&+ |I| b_{\tau} \left( 1+ ce^{-\tau(1-\delta)}+(ce^{-\tau(1-\delta)})^2 + \cdots\right).
  \end{align*}
The common ratio in the first geometric series is $ce^{-\tau \delta}$, while in the second it is $ce^{-\tau(1-\delta)}$.  By choosing $\tau$ sufficiently large, we can make both of these ratios less than $1/2$ (here we use that $\delta>0$), and then each of the series is bounded by (twice) the first term.  For new constants $w_{\tau},b_{\tau}$, we get 
\begin{align}
    \int_I \alpha_k(g_{n\tau}r_{\theta}X)d\theta &\le  \left(c e^{-\tau(1-\delta)}\right)^{n-m+1} \int_{J_{m-1}} \alpha_k(g_{(m-1)\tau}r_{\theta}X) d\theta  + w_{\tau,I} \cdot \sum_{j>k} \alpha_j(X)  e^{-\tau(1-2\delta)(n-1)}  + |I| b_{\tau}. \label{eq:many-steps}
  \end{align}

Since $\alpha_k$ is defined in terms of lengths of saddle connections, $\alpha_k(g_{(m-1)\tau}r_{\theta}X) \le e^{(m-1)\tau} \alpha_k(X),$ and so
\begin{eqnarray*}
  \int_{J_{m-1}} \alpha_k(g_{(m-1)\tau}r_{\theta}X) d\theta \le (2\pi) e^{(m-1)\tau} \alpha_k(X). 
\end{eqnarray*}

Now take $\tau$ large enough so that $c< e^{\tau\delta}$ and continue from (\ref{eq:many-steps}) to get 
\begin{align*}
 \int_I \alpha_k(g_{n\tau}r_{\theta}X)d\theta &\le  \left(c e^{-\tau(1-\delta)}\right)^{n-m+1} (2\pi) e^{(m-1)\tau} \alpha_k(X) + w_{\tau,I} \cdot \sum_{j>k} \alpha_j(X)  e^{-\tau(1-2\delta)(n-1)}  + |I| b_{\tau} \\
  &\le  \left( e^{-\tau(1-2\delta)}\right)^n \left( e^{-\tau(1-2\delta)}\right)^{-m+1}  (2\pi) e^{(m-1)\tau} \alpha_k(X) + w_{\tau,I} \cdot e^{\tau(1-2\delta)} \sum_{j>k} \alpha_j(X)  e^{-\tau(1-2\delta)n}  + |I| b_{\tau} \\
  &\le  c_{\tau,m} e^{-\tau(1-2\delta)n} \sum_{j\ge k} \alpha_j(X) + |I| b_{\tau}, 
  \end{align*}
where $c_{\tau,I}$ is a constant depending on $\tau,I$ (this constant also absorbs the $w_{\tau,I}$ term in the above).  Taking $T=n\tau$ gives the desired result. 
\end{proof}

\section{Open questions}
\label{sec:open-questions}

\begin{question}
  Do the measures $\nu_R$ on $S^1$ coming from angles of saddle connections converge weakly to Lebesgue measure as $R\to\infty$, for every surface $X$?  
  \label{question:angle-conv}
\end{question}

We could answer this affirmatively using the Eskin-Masur counting strategy if we knew that for every interval $I$, the ``arcs'' $\{g_Tr_{\theta}X: \theta \in I \}$ became equidistributed in the orbit closure $\overline{SL_2(\mathbb{R})}X$ as $T\to\infty$.  The question of equidistribution of arcs is connected to horocycle flow invariant measures on $\mathcal{H}$, since a large arc in a certain sense looks like a long piece of horocycle.  Proving results about the structure of horocycle flow invariant measures has proved to be a difficult challenge.

\begin{question}
  Do the measures $\eta_R$ on $T_1X'$, defined in Section \ref{sec:proof-cor}, converge weakly to the uniform measure on $T_1X'$ (that is, the measure which is locally the product of area measure on $X'$ and the Lebesgue measure on the tangent circle coming from the metric; in some contexts, this is known as Liouville measure)?  
  \label{question:tangent-conv}
\end{question}

Since $\nu_R$ is the push-forward of $\eta_R$ under the map taking a tangent vector to its angle, an affirmative answer to Question \ref{question:tangent-conv} implies an affirmative answer to Question \ref{question:angle-conv}.  On the other hand, if we know for a particular surface $X$ that $\nu_R$ converges weakly to Lebesgue measure on $S^1$, adapting the argument in the proof of Theorem \ref{thm:equidist}, we could conclude that the measures $\eta_R$ for $X$ converge to uniform measure on $T_1X'$.  In particular, since for \emph{almost every} $X$ in a stratum, $\nu_R$ converges to Lebesgue on $S^1$ (see \cite{vorobets2005}, Theorem 1.9), we know that the answer to Question \ref{question:tangent-conv} is yes for almost every $X$.

The following question concerns equidistribution of subsets of saddle connections that could be growing slightly slower than quadratically.  

\begin{question}
  Using the notation of Theorem \ref{thm:equidist-subset}, let $S$ be a subset of saddle connections on $X$ for which 
$$\lim_{R\to\infty} \frac{\log N_S(R)}{\log R}=2.$$
Do the measures $\mu_{R,S}$ on $X$ coming from averaging uniform measure on the elements of $S$ converge weakly to area measure on $X$, as $R\to\infty$?  
\end{question}

For instance, the question encompasses the case where $N_S(R)$ is growing like $\frac{R^2}{\log R}$.  This question is motivated by a phenomenon for hyperbolic surfaces in which collections of closed geodesics whose growth rate is within (for instance) a polynomial factor of the (exponential) growth rate of all the closed geodesics still become equidistributed on the surface.

\bibliography{sources}{}
\bibliographystyle{amsalpha}

\end{document}